\renewcommand{\section}{\@startsection {section}{1}{\z@}%
             {-3.5ex \@plus -1ex \@minus -.2ex}%
             {2.3ex \@plus.2ex}%
             {\vspace{2em}\center\normalfont\normalsize\bfseries}}
\renewcommand{\subsection}{\@startsection
{subsection}{1}{\z@}%
             {-3.5ex \@plus -1ex \@minus -.2ex}%
             {2.3ex \@plus.2ex}%
             {\normalfont\normalsize\bfseries}}
\tikzstyle{box}=[fill=white, draw=black, shape=rectangle]
\tikzstyle{red}=[-, draw=white, double={rgb,255: red,191; green,0; blue,64}]
\tikzstyle{thick line}=[-, thick]
	\definecolor{text-black}{rgb}{0.1,0.1,0.1}
	\theoremstyle{definition}
	\newtheorem{definition}{Definition}[section]
	\newtheorem{theorem}[definition]{Theorem}
	\newtheorem{remark}[definition]{Remark}
	\newtheorem{lemma}[definition]{Lemma}
	\newtheorem{proposition}[definition]{Proposition}
	\newtheorem{corollary}[definition]{Corollary}
	\newtheorem{example}[definition]{Example}
	\newtheorem*{example*}{Example}
\definecolor{colorlink}{RGB}{150, 10, 10}
\begin{document}

% Pour modifier le nom (éviter que 'référence' apparaisse deux fois)
\renewcommand{\refname}{}

\newcommand{\catname}[1]{\normalfont\mathbf{#1}}
\newcommand{\calcatname}[1]{\mathcal{#1}}

% un to-do rouge à placer où l'on veut \todo{}
\definecolor{blackred}{RGB}{150, 10, 10}
\newcommand{\todo}[1]{
	\vspace{\baselineskip}
		\hspace{0.05\textwidth}\begin{minipage}{0.9\textwidth}\hspace{-1em}\color{blackred}{\textit{To do}: #1}\end{minipage}
	\vspace{\baselineskip}
	}

% un question bleu à placer où l'on veut \question{}
\definecolor{blackblue}{RGB}{10, 10, 150}
\newcommand{\question}[1]{
	\vspace{\baselineskip}
		\hspace{0.05\textwidth}\begin{minipage}{0.9\textwidth}\hspace{-1em}\color{blackblue}{\textit{Question}: #1}\end{minipage}
	\vspace{\baselineskip}
	}

\begin{titlepage}
%\vspace{10em}

\title{\huge\textbf{\sffamily Constructing new open-closed TQFTs from the interpolation of symmetric monoidal categories}}
\author{
    \vspace{1em}
    {\sffamily \LARGE Barthélémy Neyra} \\ 
    {\large \href{mailto:b.neyra.21@abdn.ac.uk}{b.neyra.21@abdn.ac.uk}} \\
    {\small \textit{Institute of Mathematics, University of Aberdeen,}} \\
    {\small \textit{Fraser Noble Building, Aberdeen AB24 3UE, UK}}
}
%\date{\today\qquad}
\date{}

\maketitle
\thispagestyle{empty}

\begin{center}\textbf{Abstract}\end{center}

For any symmetric monoidal category $\calcatname{D}$, Lauda and Pfeiffer showed the equivalence between the $\calcatname{D}$-valued open-closed 2-dimensional TQFTs and the so-called knowledgeable Frobenius algebras (KFAs) in $\calcatname{D}$. 
Each KFA in $\calcatname{D}=\catname{Vec}_{\mathds{K}}$ provides a sequence of scalars indexed by the set $\mathds{N}^2$ of diffeomorphism classes of connected endocobordisms of the empty set, given by evaluation by the associated TQFT on each such cobordism class. From an arbitrary sequence $\chi=(\chi_{g,w})_{g,w\in\mathds{N}}$, we build a symmetric monoidal category $\calcatname{C}_{\chi}$ -- with unit object $\textbf{1}$ satisfying $\text{End}_{\calcatname{C}_{\chi}}(\textbf{1})\cong \mathds{K}$ -- generated by a KFA object affording this sequence. We then determine which sequences $\chi$ produce semisimple abelian categories $\calcatname{C}_{\chi}$ with finite-dimensional hom-spaces. These form a family of categories interpolating the categories of representations of automorphism groups of certain KFAs in $\catname{Vec}_{\mathds{K}}$.

\let\endtitlepage\relax

\end{titlepage}

\begingroup
\let\cleardoublepage\relax
\let\clearpage\relax
\tableofcontents
\endgroup

\begin{section}{Introduction}
\label{section-introduction}

It is well documented that $2$-dimensional quantum field theories (TQFTs) are a powerful machinery to produce algebraic structures. In general, it takes the following form of statement: "For any symmetric monoidal category $\calcatname{D}$, the $\calcatname{D}$-valued TQFTs from the bordism category $\calcatname{A}$ are equivalent to $\mathcal{T}$ objects in $\calcatname{D}$", where $\mathcal{T}$ is to be replaced by a particular name of algebraic structure, or, more exactly, by a \textit{theory}. Examples are given in table \ref{table:1}.

\begin{table}[h!]
\centering
\makegapedcells
\begin{tabular}{ c | c | c }
Bordism category $\calcatname{A}$ & Theory $\mathcal{T}$ & Ref. \\
\hline
\hline
oriented closed & commutative Frobenius algebra & \cite{kok} \\
\hline
$r$-spin closed & $\Lambda_r$-Frobenius algebra & \cite{stern-szegedy-2022} \\
\hline
oriented open-closed & knowledgeable Frobenius algebra & \cite{lauda-pfeiffer-1} \\
\end{tabular}

\caption{Some bordism categories and their corresponding theories.}
\label{table:1}
\end{table}

For instance, the $\calcatname{D}$-valued oriented closed TQFTs are equivalent to commutative Frobenius algebra objects in $\calcatname{D}$. We say that a commutative Frobenius algebra object in $\calcatname{D}$ is a \textit{model} in $\calcatname{D}$ for the theory of commutative Frobenius algebras. \\

In \cite{meir-interpolation}, Meir showed how to build linear, rigid, symmetric monoidal categories from an arbitrary theory $\mathcal{T}$. The construction involves a ring denoted $\mathds{K}[X]_{\text{aug}}^{\mathcal{T}}$. Each character $\chi:\mathds{K}[X]_{\text{aug}}^{\mathcal{T}}\rightarrow \mathds{K}$ (equivalently, each $\mathds{K}$-valued sequence, also denoted $\chi$, indexed by a set of generators of $\mathds{K}[X]_{\text{aug}}^{\mathcal{T}}$) can be used to produce a symmetric monoidal category $\calcatname{C}_{\chi}$. Amongst other properties, $\calcatname{C}_{\chi}$ is $\mathds{K}$-linear and has its unit object~$\textbf{1}$ satisfying $\text{End}_{\calcatname{C}_{\chi}}(\textbf{1})\cong \mathds{K}$. It is generated by a model for $\mathcal{T}$, from which one can retrieve the character $\chi$ explicitly. Conversely, any model $V$ for $\mathcal{T}$ in a tensor category $\calcatname{D}$ (cf. definition \ref{definition-tensor-category}) affords a character $\chi_V:\mathds{K}[X]_{\text{aug}}^{\mathcal{T}}\rightarrow \mathds{K}$. Some characters/sequences $\chi$ are \textit{good}, in the sense that their associated categories $\calcatname{C}_{\chi}$ are semisimple abelian with finite-dimensional hom-spaces, or equivalently (cf. definition \ref{theorem-udi-good-character}), if they are afforded by models of $\mathcal{T}$ in an abelian category with finite-dimensional hom-spaces. \\

Different authors have used this construction in particular cases. The categories $\calcatname{C}_{\chi}$ are intimately related to categories of representations of groups and give a context in which the notion of interpolation of categories of representations is meaningful. In fact, it is shown in \cite{meir-interpolation} that the interpolating families of categories of representations $\catname{Rep}(S_t)$, $\catname{Rep}(O_t)$, and $\catname{Rep}(GL_t)$, described by Deligne in \cite{deligne-category-gl} are part of this framework. In the context of Lie algebras, Vogel \cite{vogel-universal} and his projective plane parameterising (in a sense that is yet to be made precise) simple quadratic Lie algebras fit also in this picture. \\

In 2020, Khovanov, Ostrik, and Kononov \cite{kok} studied the sequences for the theory of commutative Frobenius algebras (the first line in table \ref{table:1}), giving a criterion discriminating the good sequences from the others. This is reviewed in \cite{meir-interpolation}, section 8.6. In \cite{meir-invariant-r-spin} the second line of table \ref{table:1} is studied, namely the $\Lambda_r$-Frobenius algebras and their corresponding good sequences. \\

In this paper, we continue the current thread of classifying the good sequences, this time in the context of oriented open-closed TQFTs, i.e., the third line in table \ref{table:1}. We will see that scalar invariants for $\catname{Vec}_{\mathds{K}}$-valued open-closed TQFTs cannot be arbitrary. However, it is possible to build other target categories allowing more general -- unconstrained concerning invariants -- open-closed TQFTs. To achieve this aim, we review in section \ref{section-meir-tool}, the construction of the category $\calcatname{C}_{\chi}$ given a theory $\mathcal{T}$ and a character $\chi:\mathds{K}[X]_{\text{aug}}^{\mathcal{T}}\rightarrow\mathds{K}$. We give a criterion for $\chi$ to be good. \\

In section \ref{section-open-closed}, we describe the bordism category of oriented open-closed cobordisms as the symmetric monoidal category generated by a knowledgeable Frobenius algebra (KFA) object. The theory $\mathcal{T}_{\text{KFA}}$ of KFAs has corresponding $\mathds{K}[X]_{\text{aug}}^{\mathcal{T}_{\text{KFA}}}$ the polynomial ring on the set $\mathds{N}^2$, which is precisely the polynomial ring on the set of connected endocobordisms of the empty set. 
We give some examples of KFA objects in different categories (in particular in the category of finite-dimensional $\mathds{K}$-vector spaces $\catname{Vec}_{\mathds{K}}$). \\

Any $\catname{Vec}_{\mathds{K}}$-valued open-closed TQFT gives thus a character $\chi:\mathds{K}[\mathds{N}^2]\rightarrow \mathds{K}$. It is also true that any character $\chi$ extends to a $\calcatname{C}_{\chi}$-valued open-closed TQFT, even when $\chi$ cannot be afforded by any KFA in $\catname{Vec}_{\mathds{K}}$. Meir's framework allows the construction of TQFTs associating arbitrary scalar invariants to the endocobordisms of the empty set. \\

In section \ref{section-good-knfrob}, we prove the main result of this paper, which is a criterion discriminating the good sequences.
\begin{theorem} \label{theorem-main}
    A $\mathds{K}$-valued sequence $\chi = (\chi_{g,w})_{g,w\in\mathds{N}}$ is good if and only if the associated power series 
    \[ f_{\chi}(X,Y) := \sum_{g,w\geqslant 0} \chi_{g,w} X^g Y^w \]
    belongs to the $\mathds{K}$-vector subspace $\mathcal{G}$ of $\mathds{K}[[X,Y]]$ spanned by the following set: 
    \[
         \left\{1, X, Y, Y^2 \right\} \cup \left\{ \frac{1}{1-\lambda X}\frac{1}{1-\mu Y} : \lambda, \mu \in \mathds{K}, \lambda \neq 0 \right\}.
    \]
    We call $f_{\chi}(X,Y)$ the \textit{generating function} for $\chi$.
\end{theorem}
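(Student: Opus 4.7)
The plan is to apply the criterion of definition~\ref{theorem-udi-good-character}: $\chi$ is good if and only if it is afforded by a KFA $V=(C,A,\iota,\iota^{*})$ inside a semisimple abelian $\mathds{K}$-linear category with finite-dimensional hom-spaces. The theorem then becomes a classification problem: determine the set of power series arising as $f_V$ for such KFAs. The first step is to derive a structural formula expressing the invariant of the connected surface $\Sigma_{g,w}$ (of genus $g$ with $w$ free boundary circles) in terms of closed-sector data,
\[
\chi_{g,w} \;=\; \epsilon_C\!\bigl(H_C^{\,g}(\Omega^{w})\bigr), \qquad \Omega \;:=\; \iota^{*}(1_A)\in C,
\]
where $H_C=\mu_C\circ\Delta_C$ is the closed-sector handle. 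The derivation decomposes $\Sigma_{g,w}$ through the generators of the open-closed bordism category, pushing each free boundary circle through the zipper $\iota$. The Cardy condition then specializes to $\iota(\Omega)=\pi_A$, which I expect to be the main algebraic constraint.

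For necessity, suppose $\chi$ is good and afforded by a KFA $V$. I would decompose $C$ into indecomposable algebra blocks via primitive idempotents; the morphism $\iota$ partitions $A$ accordingly. Each block $V_i$ is controlled by three scalars: $\lambda_i=\epsilon_C(1_{C_i})\ne 0$ (non-degeneracy of the Frobenius form on $C_i$), the open-sector Frobenius scalar $\mu_i$ (constrained by the block Cardy condition to satisfy $\mu_i^{\,2}=\lambda_i$), and the categorical dimension $n_i$ of the matrix block of $A_i$. A direct evaluation of the structural formula above gives
\[
f_{V_i}(X,Y) \;=\; \frac{\lambda_i}{(1-\lambda_i^{-1}X)\,(1-(n_i/\mu_i)Y)},
\]
which lies in the $\mathds{K}$-span of $\{(1-\lambda X)^{-1}(1-\mu Y)^{-1}:\lambda\ne 0,\mu\in\mathds{K}\}$. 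Additivity of $f_V$ under direct sums of KFAs then yields $f_V\in\mathcal{G}$. For sufficiency, I would exhibit, for each basis element of $\mathcal{G}$, an explicit KFA (over $\catname{Vec}_{\mathds{K}}$ or in a suitable interpolating category $\calcatname{C}_{\chi'}$) affording that element, and combine them additively to cover the full span.

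The main difficulty lies in accounting for the polynomial basis elements $\{1,X,Y,Y^{2}\}$. Their closed-sector part $\{1,X\}$ matches the corrections identified by Khovanov, Ostrik and Kononov in the purely closed case (\cite{kok}, reviewed in \cite{meir-interpolation}, section 8.6); I expect $\{Y,Y^{2}\}$ to arise as analogous open-sector corrections coming from the degenerate strata of the classification, where $A$ enjoys one extra degree of freedom over $C$ (the categorical dimension $n$ independent from the Frobenius scalar $\mu$). Ruling out contributions $Y^{k}$ for $k\geq 3$ will require exploiting the full Cardy condition applied to all of $A$, and not merely its specialization to $1_A$, together with the non-degeneracy constraints implicit in the semisimplicity of $\calcatname{C}_{\chi}$. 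I anticipate this to be the technical core of the argument.
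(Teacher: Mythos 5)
Your overall strategy (reduce to the criterion of definition~\ref{theorem-udi-good-character}, bound the possible $f_\chi$ from above, then realise everything in the span by explicit KFAs) matches the paper's, and your sufficiency direction is essentially the paper's: the matrix-block family $(F_\alpha(n),F_{\alpha^2},\iota,\iota^*)$ interpolated to $n=d\in\mathds{K}$ gives the geometric terms, the non-semisimple family of example~\ref{example-non-semisimple} gives $\operatorname{span}\{1,X,Y,Y^2\}$, and closure of good characters under addition and scalar multiplication finishes it. But the necessity direction as you describe it has two genuine gaps. First, your block decomposition of $C$ into matrix-algebra blocks with scalars $(\lambda_i,\mu_i,n_i)$ presupposes that the KFA is a semisimple algebra; this fails even in $\catname{Vec}_{\mathds{K}}$ (the algebras $A_{\alpha,\delta}(p)$ of example~\ref{example-non-semisimple} are local with nilpotent radical, and they are exactly what produces the polynomial part $\{1,X,Y,Y^2\}$). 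You flag this as "degenerate strata" to be dealt with later, but without it the necessity argument does not close. The paper sidesteps classification entirely: using proposition~\ref{properties-sigma-g-w} it writes $\chi_{g+1,w}=\mathrm{Tr}_{S^1}(G^g\circ W^w)$ and $\chi_{0,w+2}=\mathrm{Tr}_I(H^w)$ \emph{inside $\calcatname{C}_\chi$ itself}, and then invokes the general fact (proposition~\ref{trace-of-endomorphism} and corollary~\ref{trace-of-endomorphisms}) that trace generating functions of (commuting) endomorphisms in any good category are spans of $\frac{1}{1-\lambda X}\frac{1}{1-\mu Y}$. This handles non-semisimple KFAs and arbitrary good target categories uniformly, and it is also what rules out $Y^k$ for $k\geq 3$ without any appeal to the Cardy relation.

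Second, and more seriously, the trace bound just described still permits the extra generators $\frac{1}{1-\nu Y}$ and $\frac{X}{1-\nu Y}$ with $\nu\neq 0$, i.e.\ a geometric series in $Y$ unaccompanied by a nontrivial geometric factor in $X$. Excluding these is the actual crux of the theorem, and your proposal contains no argument for it; "exploiting the full Cardy condition" is not yet a proof. The paper needs a dedicated argument here (proposition~\ref{proposition-list-of-bad-characters}): if $f_\chi=\frac{1}{1-\nu Y}$ or $\frac{X}{1-\nu Y}$, then $G^2$ and $W-\nu\mathds{1}_{S^1}$ are $\chi$-negligible, and one constructs an explicit endomorphism $f$ of $I^{\otimes 3}$ with $f^4=0$ but $\mathrm{Tr}(f)=\nu^4\neq 0$, violating the third characterisation of goodness in definition~\ref{theorem-udi-good-character}. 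Note that your proposed per-block formula already builds in $\lambda_i=\epsilon_C(1_{C_i})\neq 0$, so in your framework the exclusion of $\lambda=0$ is smuggled in through the unproven classification rather than established; you would still owe an argument that no KFA in a good category can afford, say, $\chi_{g,w}=\delta_{g,0}\,\nu^w$. Until both points are addressed, the necessity half of the theorem is not proved.
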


In section \ref{section-rep-categories}, we study the categories $\calcatname{C}_{\chi}$ when $\chi$ is good, and in particular we link them to different categories of representations. This is recapitulated in table \ref{table:recap} below.

\begin{table}[!h]
\centering
\makegapedcells
    \begin{tabular}{ c | c }
        Generating function $f_{\chi}(X,Y)$ & $\calcatname{C}_{\chi}$ is equivalent to... \\
        \hline
        \hline
        $\alpha_1$, with $\alpha_1\neq 0$ & \makecell{the subcategory $\calcatname{C}$ of $\catname{Rep}(\mathfrak{osp}(1|2))$ \\ generated by its defining representation; \\ cf. section 5.5 in \cite{kok}.} \\
        \hline
        \makecell{$\alpha_1+\alpha_X X$, \\ $\alpha_X \neq 0$}  &  $\catname{Rep}(O_{\alpha_X-2})$; cf. section 7.1 in \cite{kok}. \\
        \hline
        \makecell{$\alpha_1 + \alpha_X X + \alpha_Y Y$, \\ $\alpha_Y\neq0$} & $\calcatname{C} \boxtimes \catname{Rep}(O_{\alpha_X-2})$. \\
        \hline
        \makecell{$\alpha_1 + \alpha_X X + \alpha_Y Y + \alpha_{Y^2} Y^2$, \\ $\alpha_{Y^2}\neq0$} & $\catname{Rep}(O_{\alpha_{Y^2}-2}) \boxtimes \catname{Rep}(O_{\alpha_X-3})$. \\
        \hline
        \makecell{$\frac{t}{\lambda}\frac{1}{1-\lambda X}$, \\ $\lambda,t\neq0$} & $\catname{Rep}(S_{t})$; cf. section 6.1 in \cite{kok}. \\
        \hline
        \makecell{$\frac{1}{a^2}\frac{1}{1-a^2 X}\frac{1}{1-\mu Y}$, \\ $a\neq0$} & $\catname{Rep}(PGL_{\mu/a})$. \\
        \hline
        \makecell{$\frac{t}{\lambda}\frac{1}{1-\lambda X}\frac{1}{1-\mu Y}$, \\ $\lambda,\mu\neq0$, $t\neq0,1$} & not known. \\
    \end{tabular}

    \caption{Summary of the equivalences between the category $\calcatname{C}_{\chi}$ and other categories of representations when $\chi$ is good.}
    \label{table:recap}
\end{table}

\end{section}

\begin{section}{Preliminaries}

We fix an algebraically closed field $\mathds{K}$ of characteristic 0. We use the following definitions. 

\begin{definition} \label {definition-tensor-category} (Tensor category)
    A \textit{tensor category} is a category $\calcatname{C}$ that is symmetric monoidal (with unit object $\textbf{1}$), rigid (each object has a dual), $\mathds{K}$-linear, and with endomorphism space $\text{End}_{\calcatname{C}}(\textbf{1})\cong \mathds{K}$. 
\end{definition}
In a rigid symmetric monoidal category $\mathcal{C}$, we have in particular the notion of trace $\mathrm{Tr}$ of endomorphisms. For $A\in \text{Ob}(\mathcal{C})$ and $T\in \mathcal{C}(A,A)$, $\mathrm{Tr}(T)$ is defined by the composition
\[ \textbf{1}\xrightarrow{coev} A\otimes A^{*} \xrightarrow{T\otimes \mathds{1}_{A^{*}}} A\otimes A^{*} \xrightarrow\beta A^{*}\otimes A \xrightarrow{ev} \textbf{1} \] 
which is a morphism in $\text{End}_{\calcatname{C}}(\textbf{1})$. Here, $ev$, $coev$, $\beta$ and $\mathds{1}_{A^{*}}$ are respectively the evaluation and coevaluation for $A$, the symmetric braiding, and the identity for $A^{*}$. 

\begin{definition} (Good category)
    A \textit{good category} is an abelian tensor category with finite-dimensional hom-spaces.
\end{definition}
Examples of good categories include $(\catname{s})\catname{Vec}_{\mathds{K}}$, the category of finite-dimensional (super-)vector spaces over $\mathds{K}$, and $\catname{Rep}(G)$, the category of representations of any reductive group $G$. \\

We will need the following properties for good categories.
\begin{proposition} (Corollary 2.3 in \cite{meir-interpolation})\label{trace-of-endomorphism} Let $T$ be an endomorphism in a good category. Then 
\[ \sum_{k\geqslant 0} \text{Tr}(T^k)X^k \quad \in \quad \text{span}_{\mathds{K}} \left\{ \frac{1}{1-\lambda X} : \lambda\in\mathds{K}\right\} \quad \subseteq \quad \mathds{K}[[X]].\]
\end{proposition}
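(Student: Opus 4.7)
The plan is to reduce the statement to a classical linear-algebra computation inside the finite-dimensional algebra $\mathds{K}[T]$, and then handle a single genuinely categorical input: the vanishing of the trace of a nilpotent endomorphism. First I would note that the hypothesis on $\calcatname{C}$ forces $\text{End}_{\calcatname{C}}(A)$ to be a finite-dimensional $\mathds{K}$-vector space, so the commutative $\mathds{K}$-subalgebra $\mathds{K}[T]\subseteq \text{End}_{\calcatname{C}}(A)$ generated by $T$ is itself finite-dimensional. In particular $T$ satisfies a nontrivial monic polynomial relation $p(T)=0$, and since $\mathds{K}$ is algebraically closed of characteristic $0$, this polynomial factors as $p(X)=\prod_{i=1}^{r}(X-\lambda_i)^{n_i}$ with pairwise distinct $\lambda_i\in \mathds{K}$.

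The Chinese Remainder Theorem then provides an isomorphism $\mathds{K}[T]/(p)\cong \prod_{i=1}^r \mathds{K}[T]/((X-\lambda_i)^{n_i})$, which furnishes pairwise orthogonal central idempotents $e_1,\dots,e_r\in \text{End}_{\calcatname{C}}(A)$ summing to $\mathds{1}_A$. Since $\calcatname{C}$ is abelian these idempotents split, giving a decomposition $A=\bigoplus_i A_i$ with $A_i:=\text{Im}(e_i)$, preserved by $T$, on which $T$ takes the form $T|_{A_i}=\lambda_i\mathds{1}_{A_i}+N_i$ with $N_i^{n_i}=0$. Binomial expansion yields
\[
\text{Tr}(T^k)\;=\;\sum_{i=1}^{r}\text{Tr}\bigl((\lambda_i\mathds{1}_{A_i}+N_i)^k\bigr)\;=\;\sum_{i=1}^{r}\sum_{j=0}^{k}\binom{k}{j}\lambda_i^{k-j}\,\text{Tr}(N_i^{j}),
\]
so everything reduces to controlling the scalars $\text{Tr}(N_i^{j})$.

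The main obstacle, and the only place where the rigid-abelian-tensor structure of a good category really enters, is the auxiliary claim that $\text{Tr}(N^j)=0$ for every $j\geqslant 1$ whenever $N$ is nilpotent. To establish this I would invoke the standard additivity of the categorical trace on short exact sequences, which inductively implies that for any $T$-stable finite filtration of $A$ the trace equals the sum of the traces induced on the successive subquotients. Applying this to the kernel filtration
\[
0\;=\;\ker(N^{0})\;\subseteq\;\ker(N)\;\subseteq\;\ker(N^{2})\;\subseteq\;\cdots\;\subseteq\;\ker(N^{m})\;=\;A,
\]
one checks that $N^j$ sends $\ker(N^{i})$ into $\ker(N^{i-j})\subseteq \ker(N^{i-1})$ for every $j\geqslant 1$, so the endomorphism induced by $N^j$ on each subquotient $\ker(N^{i})/\ker(N^{i-1})$ is zero and its trace vanishes.

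Combining the two previous steps, only the $j=0$ terms survive, leaving $\text{Tr}(T^{k}|_{A_i})=\lambda_i^{k}\,\text{Tr}(\mathds{1}_{A_i})$. Writing $d_i:=\text{Tr}(\mathds{1}_{A_i})\in \mathds{K}$ one concludes $\text{Tr}(T^k)=\sum_{i=1}^{r}d_i\lambda_i^{k}$, and hence
\[
\sum_{k\geqslant 0}\text{Tr}(T^{k})X^{k}\;=\;\sum_{i=1}^{r}\frac{d_i}{1-\lambda_i X},
\]
which lies in the claimed $\mathds{K}$-span. Everything outside the nilpotent-trace lemma is classical commutative algebra over an algebraically closed field, so the conceptual weight of the proof really rests on that single categorical ingredient.
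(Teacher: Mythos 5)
Your argument is correct, and in fact the paper offers no proof of this statement at all: it is quoted verbatim as Corollary~2.3 of the cited reference of Meir, so there is nothing internal to compare against. Your route (finite-dimensionality of $\text{End}_{\calcatname{C}}(A)$, the Chinese Remainder decomposition of $\mathds{K}[T]$ into generalized eigenspaces split by idempotents in the abelian category, and reduction to the vanishing of $\mathrm{Tr}(N^j)$ for $N$ nilpotent) is exactly the standard one that the cited corollary encapsulates.

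The one place where you lean on something you have not justified is the ``standard additivity of the categorical trace on short exact sequences.'' This is true in an abelian rigid symmetric monoidal category, but it is not a formality: it is precisely the non-trivial categorical input, and in nearby settings (triangulated categories, non-rigid situations) additivity of traces is well known to fail or to require extra axioms. The proof here uses that $\otimes$ and $(-)^*$ are exact by rigidity, so that $A\otimes A^*$ acquires a four-step filtration with graded pieces $A'\otimes A'^*$, $A'\otimes A''^*$, $A''\otimes A'^*$, $A''\otimes A''^*$; one then checks via dinaturality of evaluation and coevaluation that the name of $f$ lands in the subobject $(A'\otimes A^*)+(A\otimes A''^*)$, that its images in the graded pieces $A'\otimes A'^*$ and $A''\otimes A''^*$ are the names of $f'$ and $f''$, and that $\mathrm{ev}$ kills $A'\otimes A''^*$. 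You should either carry out this check or cite it precisely (it appears in Deligne's and André--Kahn's work on tensor categories, and is what Meir's Lemma~2.2 rests on); as stated, it is the entire content of the lemma you call the ``single categorical ingredient.'' Everything else in your write-up is sound, including the observation that the kernel filtration of a nilpotent $N$ has finitely many steps so no finite-length hypothesis is needed.
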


\begin{corollary}\label{trace-of-endomorphisms}
Let $T$ and $S$ be endomorphisms of an object in a good category such that $T \circ S = S \circ T$. Then 
\[ \sum_{k,n\geqslant 0} \text{Tr}(T^k \circ S^n)X^k Y^n \quad \in \quad \text{span}_{\mathds{K}} \left\{ \frac{1}{1-\lambda X}\frac{1}{1-\mu Y} : \lambda,\mu\in\mathds{K}\right\} \quad \subseteq \quad \mathds{K}[[X,Y]].\]
\end{corollary}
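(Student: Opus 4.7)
The plan is to reduce the two-variable statement to two successive applications of Proposition \ref{trace-of-endomorphism}, exploiting the commutativity of $T$ and $S$. The key observation is that $T$ and $S$ generate a commutative, finite-dimensional $\mathds{K}$-subalgebra $\mathds{K}[T,S] \subseteq \text{End}(A)$, whose local decomposition suffices to decouple the two variables.

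The main technical step is a mild strengthening of Proposition \ref{trace-of-endomorphism}: for any endomorphism $R$ of $A$ commuting with $T$, the series $\sum_{k \geqslant 0} \text{Tr}(T^k \circ R) X^k$ still lies in $\text{span}_{\mathds{K}}\{1/(1-\lambda X) : \lambda \in \mathds{K}\}$. One obtains this by revisiting the proof of Proposition \ref{trace-of-endomorphism}: the decomposition of the finite-dimensional commutative algebra $\mathds{K}[T]$ into its local Artinian components produces orthogonal idempotents $e_i \in \mathds{K}[T]$ summing to $\mathds{1}_A$ and scalars $\lambda_i \in \mathds{K}$ for which $N_i := (T - \lambda_i \mathds{1}_A) e_i$ is nilpotent. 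Because $R$ commutes with every element of $\mathds{K}[T]$, each $N_i^j R$ (for $j \geqslant 1$) is itself nilpotent, and the vanishing of the trace on nilpotent endomorphisms in a good category forces $\text{Tr}(T^k \circ R) = \sum_i \lambda_i^k \, \text{Tr}(e_i \circ R)$.

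Applying this strengthening to $R = S^n$ produces coefficients $\alpha_i(n) := \text{Tr}(e_i \circ S^n)$ satisfying $\sum_{k \geqslant 0} \text{Tr}(T^k \circ S^n) X^k = \sum_i \alpha_i(n)/(1 - \lambda_i X)$. Since each $e_i \in \mathds{K}[T]$ automatically commutes with $S$, a second application of the strengthening---this time to $S$ with commuting coefficient $e_i$---gives $\sum_{n \geqslant 0} \alpha_i(n) Y^n \in \text{span}_{\mathds{K}}\{1/(1-\mu Y) : \mu \in \mathds{K}\}$. Combining the two expansions exhibits the double series $\sum_{k,n \geqslant 0} \text{Tr}(T^k \circ S^n) X^k Y^n$ as a finite $\mathds{K}$-linear combination of terms $1/((1-\lambda X)(1-\mu Y))$, as required.

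I expect the main obstacle to be the strengthening of Proposition \ref{trace-of-endomorphism} itself: it cannot be invoked as a pure black box, and must instead be reproved with the commuting coefficient $R$ inserted. The only nontrivial ingredient is the vanishing of the trace on nilpotent endomorphisms in a good category, which is already the essence of Proposition \ref{trace-of-endomorphism}; once this is available, the rest of the argument is a formal concatenation of two one-variable expansions.
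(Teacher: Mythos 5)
Your argument is correct and is in substance the same as the paper's (one-line) proof: the paper invokes the fact that commuting endomorphisms preserve their mutual generalized eigenspaces, and your product idempotents $e_i f_j \in \mathds{K}[T,S]$ are precisely the projections onto those mutual components, with the vanishing of traces of nilpotents doing the same work in both versions. Your sequential organization---a one-variable statement strengthened by an inserted commuting factor $R$, applied first with $R=S^n$ and then with $R=e_i$---is simply a tidier way of writing out the ``mutatis mutandis'' the paper leaves implicit.
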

\begin{proof}
    By noting that commuting endomorphisms preserve their mutual eigenspaces, the proof of the corollary \ref{trace-of-endomorphisms} can be obtained \textit{mutatis mutandis} from the proof of the above proposition \ref{trace-of-endomorphism}.
\end{proof}

\end{section}

% --- Section: MEIR'S TOOL ---
\begin{section}{Interpolation of symmetric monoidal categories}
\label{section-meir-tool}

In this section we review the principal aspects of the tool developed in \cite{meir-interpolation} to construct symmetric monoidal categories from an algebra of diagrams. We restrict ourselves to the intuitive understanding of this tool in order to use it in the remaining sections. 

\begin{subsection}{Algebraic types and closed diagrams}

\begin{definition} (Algebraic type)
    A \textit{type} $t$ is a finite set of boxes, each depicted like the following picture.
    \[ \underbrace{\overbrace{\tikzfig{annex/example-x}}^{p_x}}_{q_x} \]
    Each box has a different label $x$ and specified numbers $p_x$ and $q_x$ in $\mathds{N}$ of outgoing and incoming lines (reading from bottom to top). We require types to have a distinguished box, labelled $\mathds{1}$, with $p_{\mathds{1}}=q_{\mathds{1}}=1$. We often denote a type $t$ by $t=((p_x,q_x))$.
\end{definition}

\begin{example*}
    The type $((1,1), (1,2))$ consists in the following set of boxes.
    \[ \tikzfig{LA/box-identity} \qquad \tikzfig{LA/box-l}  \]
\end{example*} 

%\vspace{1em}
\begin{definition} (Diagram)
    Let $t$ be a type. A \textit{diagram} is a combination of (copies of) the boxes in $t$. Such combinations consist of the following: 
    \begin{itemize}
        \item all the boxes in $t$ are diagrams;
        \item tensoring: we can form a diagram by putting two diagrams next to one another;
        \item composition: we can form a new diagram from an old one by linking a incoming line to an outgoing line;
        \item permutation: we can form a new diagram from an old one by performing any permutation of the free incoming lines, and by performing any permutation of the free outgoing lines.
    \end{itemize}
    Furthermore, the distinguished box labelled by $\mathds{1}$ acts as the identity for composition, and only the order of incoming and outgoing lines matters, not the order of the boxes. 
\end{definition}
For example, for a type $((1,1),(2,1),(1,1),(0,2))$, we could form the diagram
\[ \tikzfig{annex/tensor-x1-x2-x3-composed} \]
and we would have for instance the following identities:
\[ 
    \tikzfig{annex/permutation-1} = \tikzfig{annex/permutation-2} \qquad \qquad \tikzfig{annex/identity-1} =  \tikzfig{annex/identity-2}.
\]

\begin{definition} (Algebras $Con$ and $\mathds{K}[X]_{\text{aug}}$) Let $t$ be a type. Define $Con^{p,q}$ to be the $\mathds{K}$-vector space spanned by all diagrams for the type $t$ with a total of $p$ outgoing and $q$ incoming lines, and $Con = \bigoplus_{p,q} Con^{p,q}$ to be the unital algebra of \textit{constructible diagrams}, with multiplication given by tensoring of diagrams and where the identity element is the empty diagram. The commutative subalgebra $\mathds{K}[X]_{\text{aug}}:=Con^{0,0}\subseteq Con$ is the polynomial ring on the set of \textit{closed} and \textit{connected} diagrams (i.e., those diagrams which have no free inputs or ouputs, and which cannot be written as the tensoring of two or more non-empty diagrams). 
\end{definition}

\begin{definition} (Algebraic structure)
    Let $t$ be a type. An \textit{algebraic structure} of type $t$ in a tensor category $\calcatname{C}$ is an object $V$ of $\calcatname{C}$ together with a specified set of morphisms $x_V:V^{\otimes q_x}\rightarrow V^{\otimes p_x}$, called \textit{structure tensors}, one for each box $x$ of the type $t$ ($V^{\otimes 0}\equiv \textbf{1}$ is the unit object of $\calcatname{C}$). We require the identity box to be associated to the identity morphism. Any element in $Con^{p,q}$ can be realised naturally as a string diagram in $\calcatname{C}$, i.e., as a morphism $V^{\otimes q}\rightarrow V^{\otimes p}$, using composition and tensoring of the structure morphisms $x_V$ of $V$, and the duality structure of $V$. We sometimes denote the algebraic structure $V$ by $(V,(x_V))$ to emphasize its structure tensors.
\end{definition}

\begin{definition} (Isomorphism of algebraic structures)
    Let $t$ be a type and $\calcatname{C}$ a tensor category. Two algebraic structures $(V,(x_V))$ and $(W,(x_W))$ of type $t$ in $\calcatname{C}$ are said to be \textit{isomorphic} if there is an isomorphism $u\in\calcatname{C}(V,W)$ such that for all boxes $x$ of $t$, the corresponding structure tensors $x_V:V^{\otimes q_x}\rightarrow V^{\otimes p_x}$ and $x_W:W^{\otimes q_x}\rightarrow W^{\otimes p_x}$ satisfy $x_W \circ u^{\otimes q_x} = u^{\otimes p_x}  \circ x_V$.
\end{definition}

\begin{definition} (Theory)
Let $t$ be a type. A \textit{theory} $\mathcal{T}$ for $t$ consists in a subset $\mathcal{T}\subset\sqcup_{p,q} Con^{p,q}$. Each element of $\mathcal{T}$ is called an \textit{axiom}. 
\end{definition}

\begin{example*}[continued]
    The following set of axioms defines a theory $\mathcal{T}_{\text{LA}}$ (the theory of \textit{Lie algebras}) for the type $((1,1),(1,2))$.
    \[
        \tikzfig{LA/axiom-jacobi-1} + \tikzfig{LA/axiom-jacobi-2} + \tikzfig{LA/axiom-jacobi-3} \quad ; \quad \tikzfig{LA/box-l} + \tikzfig{LA/axiom-l-antisymmetric-2}
    \]
\end{example*}

\begin{definition} (Model of a theory)
    Let $t$ be a type, $\mathcal{T}$ a theory for $t$, and $V$ an algebraic structure of type $t$ in a tensor category $\calcatname{C}$. We say that $V$ is a \textit{model} for $\mathcal{T}$ in $\calcatname{C}$ if all the axioms of $\mathcal{T}$ are realised via $V$ as $0$ morphisms in $\calcatname{C}$.
\end{definition}
For that reason, we will often describe an axiom by an equality between different (linear combinations of) diagrams.

\begin{example*}[continued]
    The simple Lie algebra $\mathfrak{sl}(2)$ is a model for $\mathcal{T}_{\text{LA}}$ in the category of finite-dimensional $\mathds{K}$-vector spaces. The identity box is sent to the identity linear map of $\mathfrak{sl}(2)$, and the other box is sent to the Lie bracket of $\mathfrak{sl}(2)$. Anti-symmetry and Jacobi identity of this Lie bracket ensure that the axioms in $\mathcal{T}_{\text{LA}}$ vanish. 
\end{example*}

\end{subsection}

\begin{subsection}{Meir's good characters}

\begin{definition} \label{definition-character} (Character, algebra of characters)
    Let $t$ be a type. A ring homomorphism $\chi:\mathds{K}[X]_{\text{aug}}\rightarrow \mathds{K}$ is called a \textit{character}. We denote by $\text{Ch}_{\emptyset}$ the set of characters. It has a natural structure of an algebra (cf. section 6 in \cite{meir-interpolation}): for two characters $\chi_1,\chi_2\in \text{Ch}_{\emptyset}$, and any scalar $\lambda\in\mathds{K}$, we define the characters $\chi_1+\chi_2$ (addition), $\chi_1\chi_2$ (product), and $\lambda\chi_1$ (multiplication by a scalar) by their actions on any closed connected diagram $D\in\mathds{K} [X]_{\text{aug}}$:
    \[  
    \left\{
    \begin{array}{lll}
        (\chi_1\chi_2)(D) & = & \chi_1(D)\chi_2(D), \\(\chi_1+\chi_2)(D) & = & \chi_1(D)+\chi_2(D), \\
        (\lambda\chi_1)(D) & = & \lambda(\chi_1(D)). 
    \end{array} 
    \right.
    \]
\end{definition}

Let $t$ be a type and $V$ an algebraic structure of type $t$ in a tensor category $\calcatname{C}$, with unit object $\textbf{1}$. The realisation of $f\in \mathds{K}[X]_{\text{aug}}$ in $\calcatname{C}$ via $V$ gives a morphism in $\text{End}_{\calcatname{C}}(\textbf{1})$, in other words, an element of $\mathds{K}$, and this realisation is a ring homomorphism. Thus, any algebraic structure $V$ of type $t$ defines a character $\chi_V\in\text{Ch}_{\emptyset}$. We say that $V$ affords $\chi_V$. Conversely, as we will see in the next subsection, any character $\chi \in \text{Ch}_{\emptyset}$ can be used to build a tensor category $\calcatname{C}_{\chi}$ containing an object affording $\chi$.

\begin{remark}
    A character $\chi:\mathds{K}[X]_{\text{aug}}\rightarrow \mathds{K}$ is entirely determined by its values on the closed connected diagrams, i.e., specifying a character $\mathds{K}$ is equivalent to specifying a $\mathds{K}$-valued sequence indexed by the set of closed connected diagrams.
\end{remark}

The characters are interesting because they give a first necessary (but not sufficient) condition for two algebraic structures in a tensor category to be isomorphic:
\begin{proposition}
    Let $V,W$ be two isomorphic structures of type $t$ in a tensor category $\calcatname{C}$. Then $\chi_V=\chi_W$.
    \begin{proof}
        This follows directly from the definition of isomorphism of algebraic structures and the fact that closed diagrams do not have any free incoming or outgoing lines. Any closed diagram $f$ is interpreted along $V$ and $W$ as the trace of endomorphisms $f_V$ and $f_W$ in $\calcatname{C}$, related by conjugation by the isomorphism between $V$ and $W$. The trace operation is invariant under such conjugation, thus the closed diagram take the same value, i.e., $\text{Tr}(f_V)=\text{Tr}(f_W)$.
    \end{proof}
\end{proposition}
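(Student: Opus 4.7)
The plan is to reduce the statement to the conjugation-invariance of trace in a rigid symmetric monoidal category. Since both $\chi_V$ and $\chi_W$ are ring homomorphisms $\mathds{K}[X]_{\text{aug}}\to\mathds{K}$, and since $\mathds{K}[X]_{\text{aug}}$ is the polynomial ring on the closed connected diagrams, it suffices to check that $\chi_V(D)=\chi_W(D)$ for every closed connected diagram $D$.

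The core of the argument is an induction on the way $D$ is built up from boxes by tensoring, composition, and permutation: for any diagram $D\in Con^{p,q}$, I claim the two realisations satisfy
\[ D_W \circ u^{\otimes q} \;=\; u^{\otimes p}\circ D_V. \]
The base case, where $D$ is a single box, is precisely the defining property of an isomorphism of algebraic structures. The inductive step uses bifunctoriality of $\otimes$ for horizontal juxtaposition, pasting of commuting squares for composition along a line, and naturality of the symmetric braiding for permutations of free lines. When realising a general diagram as a string diagram in $\calcatname{C}$ one also encounters duals: here one invokes the canonical induced isomorphism $u^{*}:W^{*}\to V^{*}$ compatible with the evaluation and coevaluation maps, so that cups and caps are also intertwined by appropriate tensor powers of $u$ and $u^{*}$.

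Finally, for a closed $D$ one opens one strand to present the two realisations as traces of endomorphisms $g_V\in\text{End}_{\calcatname{C}}(V^{\otimes n})$ and $g_W\in\text{End}_{\calcatname{C}}(W^{\otimes n})$. Applying the intertwining claim to $g$ yields $g_W = u^{\otimes n}\circ g_V\circ (u^{-1})^{\otimes n}$, and the cyclic invariance of the trace in a rigid symmetric monoidal category gives
\[ \chi_W(D) = \text{Tr}(g_W) = \text{Tr}\bigl(u^{\otimes n}\circ g_V\circ (u^{-1})^{\otimes n}\bigr) = \text{Tr}(g_V) = \chi_V(D). \]

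The only real subtlety I expect is the bookkeeping around duals, namely confirming that the inductive intertwining statement propagates through every evaluation and coevaluation appearing in the string-diagrammatic realisation of $D$ and is compatible with the chosen duality data on $V$ and $W$. Once this is settled, the rest is a formal consequence of trace-cyclicity.
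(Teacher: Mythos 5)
Your proposal is correct and follows essentially the same route as the paper's proof: reduce to closed connected diagrams, show the two realisations are intertwined (hence conjugate) by the isomorphism $u$, and conclude by conjugation-invariance of the trace. The inductive bookkeeping you add over tensoring, composition, permutation, and duality is exactly the detail the paper's one-line argument leaves implicit.
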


Given a character $\chi$, some elements in $Con$ might be automatically annihilated by $\chi$, in the following sense: 
\begin{definition} ($\chi$-negligible element)
An element $f \in Con^{p,q}$ is called \textit{$\chi$-negligible} if it is in the radical $rad^{p,q}_{\chi}$ of the $\mathds{K}$-bilinear pairing
\begingroup
\allowdisplaybreaks[0]
\begin{align*}
	pair^{p,q}_{\chi}:Con^{p,q}\times Con^{q,p} & \quad \rightarrow \quad \mathds{K} \\
	\left(\tikzfig{annex/pairing-1}\right) &\quad  \mapsto  \quad \chi \left( \tikzfig{annex/pairing-2}  \right).
\end{align*}
\endgroup
\end{definition}
Intuitively, a diagram will be $\chi$-negligible if any closed diagram in which it appears is sent to $0$ by $\chi$. We define $rad^{p,q}_{\chi}$ to be the radical of $pair^{p,q}_{\chi}$; these radicals are important ingredients in the building of the category $\calcatname{C}_{\chi}$, described in the next section. 
Some characters produce categories $\calcatname{C}_{\chi}$ with additional good properties, similar to the category $\catname{Vec}_{\mathds{K}}$. 

\begin{definition} \label{theorem-udi-good-character} (cf. Theorem 1.1 in \cite{meir-interpolation}) Let $\chi: \mathds{K}[X]_{\text{aug}} \rightarrow \mathds{K}$ be a character. It is called \textit{good} if one of the following equivalent conditions is satisfied.
\begin{itemize}
    \item The character $\chi$ is afforded by an algebraic structure in a good category. 
    \item The category $\mathcal{C}_{\chi}$ is semisimple good.
    \item The category $\mathcal{C}_{\chi}$ satisfies the two properties:
        \begin{itemize}
            \item its hom-spaces are finite-dimensional: $\text{dim }\mathcal{C}_{\chi}(A,B) \leqslant \infty$ for all objects $A,B\in \text{Ob}(\mathcal{C}_{\chi})$;
            \item its nilpotent endomorphisms have vanishing trace: for any object $A$ of $\mathcal{C}_{\chi}$ and any $f\in \mathcal{C}_{\chi}(A,A)$, if $f^r=0$ for some $r>0$, then $\mathrm{Tr}(f)=0$.
        \end{itemize}
\end{itemize}
We denote by $\text{Ch}_{\emptyset}^{\mathcal{G}} \subseteq \text{Ch}_{\emptyset}$ the set of good characters. 
We call a sequence \textit{good} if its associated character is good.
\end{definition}

\begin{proposition} (Section 6 in \cite{meir-interpolation})
    The set of good characters $\text{Ch}_{\emptyset}^{\mathcal{G}}$ forms a subalgebra of the algebra of characters $\text{Ch}_{\emptyset}$.
\end{proposition}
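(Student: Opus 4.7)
By Definition~\ref{theorem-udi-good-character}, a character is good if and only if it is afforded by an algebraic structure in a good category. I would therefore show that the set of such characters is closed under the three operations defining the algebra of characters. Fix structures $(V_i,(x_{V_i}))$ of type $t$ affording $\chi_i$ in good categories $\calcatname{C}_i$ for $i=1,2$, and work inside the Deligne tensor product $\calcatname{C}_1\boxtimes\calcatname{C}_2$, which is again a good category and receives both $\calcatname{C}_1$ and $\calcatname{C}_2$ as tensor subcategories via $A\mapsto A\boxtimes\textbf{1}$ and $B\mapsto\textbf{1}\boxtimes B$.

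For the \textit{product} $\chi_1\chi_2$: the structure $V_1\boxtimes V_2$ with tensors $x_{V_1}\boxtimes x_{V_2}$ is well-defined in $\calcatname{C}_1\boxtimes\calcatname{C}_2$, and since $\boxtimes$ respects composition and tensoring componentwise, any closed connected diagram $D$ evaluates on this structure to the product $\chi_1(D)\chi_2(D)$, so $\chi_1\chi_2$ is good. For the \textit{sum} $\chi_1+\chi_2$: take $W=V_1\oplus V_2$ with \emph{block-diagonal} structure tensors, i.e.\ $x_W$ restricts to $x_{V_i}$ on the pure summand $V_i^{\otimes q_x}\subseteq W^{\otimes q_x}$ and vanishes on every mixed summand (with the identity box still sent to $\mathds{1}_W=\mathds{1}_{V_1}\oplus\mathds{1}_{V_2}$). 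Because $D$ is connected, the block-diagonal evaluation cannot mix the two colours, so only the two monochromatic contributions survive and sum to $\chi_1(D)+\chi_2(D)$, as required.

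The \textit{scalar multiplication} $\lambda\chi_1$ is the main obstacle, since no naive categorical operation on structures corresponds to scaling by a general $\lambda\in\mathds{K}$. Integer multiples $n\chi_1$ are handled by $n$-fold iteration of the sum construction. For arbitrary $\lambda$, the strategy is to factor $\lambda\chi_1=\chi_1\cdot c_\lambda$, where $c_\lambda$ is the character assigning $\lambda$ to every closed connected diagram, and then invoke the already established closure under products once $c_\lambda$ is shown to be good. To witness $c_\lambda$ one must pass to a sufficiently rich good category --- for instance a Deligne interpolation category such as $\catname{Rep}(GL_\alpha)$ --- in which an object of prescribed $\mathds{K}$-valued dimension $\lambda$ is available, and calibrate its structure tensors so that every closed connected diagram (not only the trivial loop) evaluates to exactly $\lambda$, uniformly in the type~$t$. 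The detailed construction and verification of this last step is carried out in Section~6 of~\cite{meir-interpolation}.
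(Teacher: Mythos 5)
The paper states this proposition without proof (it is quoted from Section 6 of \cite{meir-interpolation}), so there is no internal argument to compare against; judged on its own terms, your treatment of the product (via $V_1\boxtimes V_2$ in the good category $\calcatname{C}_1\boxtimes\calcatname{C}_2$) and of the sum (via the block-diagonal structure on $V_1\oplus V_2$, with connectedness of the closed diagram killing every mixed contribution) is correct, and it is exactly the mechanism the paper itself uses later when showing $\text{Ch}_{\mathcal{T}_{\text{KFA}}}$ is closed under addition.

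The genuine gap is the scalar multiplication. The reduction $\lambda\chi_1=\chi_1\cdot c_\lambda$ is fine (both sides are ring homomorphisms on the polynomial ring $\mathds{K}[X]_{\text{aug}}$ agreeing on the closed connected generators), but the whole weight of the argument now rests on $c_\lambda$ being good, and that is precisely the step you do not carry out --- you only gesture at ``calibrating structure tensors'' and defer to the reference. Worse, the hint points the wrong way: producing an object of categorical dimension $\lambda$ only accounts for the circle diagram, and $\catname{Rep}(GL_\alpha)$ cannot work for a general type, since on its tautological object $\text{Hom}(V^{\otimes q},V^{\otimes p})=Con^{p,q}$ vanishes whenever $p\neq q$, so any box with $p_x\neq q_x$ could only be realised as $0$ and the corresponding closed diagrams would evaluate to $0$, not $\lambda$. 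The construction that does work is the ``one-block'' structure on the tautological object of Deligne's $\catname{Rep}(S_\lambda)$: realise each box $x$ as the partition of its $p_x+q_x$ endpoints into a single block (this morphism exists in the partition category for all $p_x,q_x$ and is $\mathds{1}_V$ when $p_x=q_x=1$), and send any box with $p_x=q_x=0$ to the scalar $\lambda$. For a closed \emph{connected} diagram, stacking one-block partitions produces the closed partition with a single block, whose value is $\lambda^{\#\text{blocks}}=\lambda$; hence this structure affords $c_\lambda$ in a good category, and your product argument then finishes the proof. Without this (or some equivalent) construction, closure under scalar multiplication remains unproved.
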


\end{subsection}

\begin{subsection}{The categories $\calcatname{C}_{\chi}$}

Given a type $t$ and a theory $\mathcal{T}$ for this type, we can construct an additive $\mathds{K}$-linear rigid symmetric monoidal category $\mathcal{C}_{\text{univ}}^{\mathcal{T}}$. This category is universal in the following sense:
\begin{proposition} \label{proposition-universal-property} (Proposition 3.5 in \cite{meir-interpolation})
    Let $\mathcal{D}$ be a tensor category. The isomorphism classes of symmetric monoidal $\mathds{K}$-linear functors $F: \mathcal{C}_{\text{univ}}^{\mathcal{T}} \rightarrow \mathcal{D}$ are in one-to-one correspondence with isomorphism classes of dualizable models for $\mathcal{T}$ in $\mathcal{D}$.
\end{proposition}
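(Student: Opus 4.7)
The plan is to exhibit $\calcatname{C}_{\text{univ}}^{\mathcal{T}}$ as the free $\mathds{K}$-linear rigid symmetric monoidal category generated by a dualizable model for $\mathcal{T}$, and then read off the universal property as essentially tautological. First, I would identify inside $\calcatname{C}_{\text{univ}}^{\mathcal{T}}$ a distinguished object $V_{\text{univ}}$ (the generating object), equipped with structure tensors $x_{V_{\text{univ}}}:V_{\text{univ}}^{\otimes q_x}\to V_{\text{univ}}^{\otimes p_x}$ obtained by interpreting each box $x$ of the type $t$ as a morphism in the universal category. By construction of $\calcatname{C}_{\text{univ}}^{\mathcal{T}}$, every axiom of $\mathcal{T}$ is imposed to be zero, so $V_{\text{univ}}$ is a model for $\mathcal{T}$, and it is dualizable because the ambient category is rigid.

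Second, I would define the forward assignment $F \mapsto (F(V_{\text{univ}}),(F(x_{V_{\text{univ}}})))$. This object is a model for $\mathcal{T}$ in $\calcatname{D}$ because symmetric monoidal $\mathds{K}$-linear functors preserve tensoring, composition, identities, braiding, and $\mathds{K}$-linear combinations, hence send the realisation of any axiom to zero; it is dualizable since symmetric monoidal functors preserve duals. Isomorphic functors clearly yield isomorphic models, via the component of the monoidal natural isomorphism at $V_{\text{univ}}$.

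Third, for the inverse assignment, given a dualizable model $(V,(x_V))$ in $\calcatname{D}$, I would construct $F_V: \calcatname{C}_{\text{univ}}^{\mathcal{T}} \to \calcatname{D}$ by sending $V_{\text{univ}}^{\otimes n} \mapsto V^{\otimes n}$ (extended additively to direct sums and the unit) and by sending each constructible diagram $D \in Con^{p,q}$ to its realisation $D_V : V^{\otimes q} \to V^{\otimes p}$ built from the structure tensors of $V$ and its duality data. This assignment factors through morphisms of $\calcatname{C}_{\text{univ}}^{\mathcal{T}}$ because (i) the relations imposed by tensoring, composition, permutation and the identity box hold in any symmetric monoidal category, (ii) the axioms of $\mathcal{T}$ vanish via $V$ precisely because $V$ is a model, and (iii) dualizability of $V$ allows one to realise the duals that appear in $\calcatname{C}_{\text{univ}}^{\mathcal{T}}$. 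Symmetric monoidal and $\mathds{K}$-linear structures are respected by definition of the realisation map.

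Finally, I would check the two assignments are mutually inverse up to isomorphism. Starting from a functor $F$, the functor $F_{F(V_{\text{univ}})}$ agrees with $F$ on $V_{\text{univ}}$ and its structure tensors, and hence on every object and every morphism, since these are generated (under tensor, sum, composition and duality) by $V_{\text{univ}}$ and the $x_{V_{\text{univ}}}$; this yields a canonical monoidal natural isomorphism $F_{F(V_{\text{univ}})} \cong F$. Conversely, for a model $(V,(x_V))$, evaluating $F_V$ on $V_{\text{univ}}$ returns $(V,(x_V))$ on the nose. The main obstacle I anticipate is not in the universal property itself but in the careful setup: one must verify that the subspaces imposed by the axioms of $\mathcal{T}$ (together with all duality/braiding/monoidal relations) form a tensor ideal stable under horizontal and vertical composition, so that the quotient category $\calcatname{C}_{\text{univ}}^{\mathcal{T}}$ inherits a well-defined $\mathds{K}$-linear rigid symmetric monoidal structure. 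Once this is established, the correspondence above is a formal consequence of the fact that $V_{\text{univ}}$ is the universal dualizable model.
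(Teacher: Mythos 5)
Your argument is correct and is essentially the standard free-category universal-property argument: the paper itself does not prove this proposition but imports it from \cite{meir-interpolation}, where the proof proceeds exactly as you describe (the generating object $V^{1,0}$ with the boxes as structure tensors is the universal dualizable model, and evaluation at it is inverse to the realisation functor $F_V$). The ``main obstacle'' you flag is already handled by the paper's construction, since $\mathfrak{I}_{\mathcal{T}}$ is \emph{defined} as the tensor ideal generated by the axioms, so the quotient automatically inherits the $\mathds{K}$-linear rigid symmetric monoidal structure.
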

The categories $\calcatname{C}_{\chi}$ are built from this universal category. We follow section 3 in \cite{meir-interpolation}. We start by defining a monoidal category $\mathcal{C}_0$.

\begin{itemize}
    \item Objects of $\mathcal{C}_0$ are symbols $V^{a,b}$ with $a,b\in \mathds{N}$. We can think of them as $V^{\otimes a}\otimes (V^*)^{\otimes b}$.
    \item $\mathcal{C}_0(V^{a,b}, V^{c,d}) = Con^{c+b, d+a}$.
    \item Composition of morphisms is given by the $\mathds{K}$-linear map:
    \begin{align*}
    	\mathcal{C}_0(V^{a,b}, V^{c,d})\otimes \mathcal{C}_0(V^{c,d}, V^{e,f}) & \quad \rightarrow \quad \mathcal{C}_0(V^{a,b}, V^{e,f}) \\
    	\tikzfig{annex/composition-c0-2} \circ \tikzfig{annex/composition-c0-1} & \quad =  \quad \tikzfig{annex/composition-c0} .
    \end{align*}
\end{itemize}
Composition is associative, and there is an identity morphism $\mathds{1}_{V^{a,b}} \in \mathcal{C}_0(V^{a,b}, V^{a,b})$:
\[
    \mathds{1}_{V^{a,b}} \quad := \quad \tikzfig{annex/identity-c0}, \quad \text{ where } \quad \tikzfig{annex/identity-c0-Va} .
\]
We can see $\mathcal{C}_0$ as a rigid symmetric monoidal category with unit object $\textbf{1}=V^{0,0}$ by defining:
\begin{align*} 
    V^{a,b}\otimes V^{c,d} & \quad = \quad V^{a+c,b+d} \\
    \tikzfig{annex/tensor-c0-1} \otimes \tikzfig{annex/tensor-c0-2} & \quad  =  \quad \tikzfig{annex/tensor-c0} .
\end{align*}
An object $V^{a,b}$ has dual object $V^{b,a}$. The evaluation and coevaluation morphisms are both given by the same diagram as the identity on $V^{a,b}$, but interpreted in different hom-spaces. The symmetric braiding is given by the morphisms:
\[
    \beta_{V^{a,b}\otimes V^{c,d}} \quad := \quad \tikzfig{annex/symmetric-c0} \quad : \quad V^{a,b} \otimes V^{c,d} \quad \rightarrow \quad V^{c,d} \otimes V^{a,b} .
\]

Note that $\mathds{K}[X]_{\text{aug}}=\text{End}_{\mathcal{C}_0}(\textbf{1})$.

\begin{definition} (Tensor ideal) Let $\mathcal{D}$ be a $\mathds{K}$-linear symmetric monoidal category. A \textit{tensor ideal} $\mathfrak{I}$ of $\mathcal{D}$ is a collection of subspaces $\mathfrak{I}(A,B) \subseteq \mathcal{D}(A,B)$ for every objects $A$ and $B$ of $\mathcal{D}$ satisfying the following two conditions. For arbitrary objects $A,B,C$ and $D$ in $\mathcal{D}$: 
\begin{itemize}
    \item if $f\in \mathcal{D}(A,B)$, $g\in \mathfrak{I}(B,C)$, $h\in \mathcal{D}(C,D)$, then $h\circ g \circ f\in\mathfrak{I}(A,D)$;
    \item if $f\in \mathcal{D}(A,B)$, $g\in \mathfrak{I}(C,D)$, then $f\otimes g \in \mathfrak{I}(A\otimes C,B\otimes D)$.
\end{itemize}

\end{definition}

\begin{definition} (Quotient category)
    Let $\mathcal{D}$ be a $\mathds{K}$-linear rigid symmetric monoidal category, and $\mathfrak{I}$ a tensor ideal of $\mathcal{D}$. The \textit{quotient category} $\mathcal{D}/\mathfrak{I}$ is the category whose objects are the same as the ones of $\mathcal{D}$, and whose hom-spaces $(\mathcal{D}/\mathfrak{I})(A,B)$ are the quotient spaces $\mathcal{D}(A,B)/\mathfrak{I}(A,B)$. It is again a $\mathds{K}$-linear rigid symmetric monoidal category.
\end{definition}

The universal category $\mathcal{C}_{\text{univ}}^{\mathcal{T}}$ for the theory $\mathcal{T}$ is defined as the quotient by a certain tensor ideal $\mathfrak{I}_{\mathcal{T}}$ of the additive envelope of $\mathcal{C}_0$. This tensor ideal is the tensor ideal generated by the elements in $\mathcal{T}$, seen as morphisms in the additive envelope of $\mathcal{C}_0$. We thus have in particular that $\mathcal{C}_{\text{univ}}^{\mathcal{T}}(\textbf{1},\textbf{1}) = \mathds{K}[X]_{\text{aug}} / \mathfrak{I}_{\mathcal{T}}(\textbf{1},\textbf{1}) =: \mathds{K}[X]_{\text{aug}}^{\mathcal{T}}$. \\

We define $\text{Ch}_{\mathcal{T}}$ to be the set of ring homomorphisms $\{ \chi:\mathds{K}[X]_{\text{aug}}^{\mathcal{T}}\rightarrow \mathds{K} \}\subseteq \text{Ch}_{\emptyset}$, and $\text{Ch}_{\mathcal{T}}^{\mathcal{G}}:=\text{Ch}_{\mathcal{T}}\cap \text{Ch}_{\emptyset}^{\mathcal{G}}$ to be its subset of good characters. For some theories $\mathcal{T}$, $\text{Ch}_{\mathcal{T}}$ also forms an algebra in its own right, as we will see in section \ref{section-good-knfrob}. \\

Given a character $\chi: \mathds{K}[X]_{\text{aug}}^{\mathcal{T}} \rightarrow \mathds{K}$, the collection of $\chi$-negligible morphisms generates a tensor ideal $\mathfrak{N}_{\chi}$ of $\mathcal{C}_{\text{univ}}^{\mathcal{T}}$. We define $\mathcal{C}_{\chi}$ to be the Karoubi envelope of $\mathcal{C}_{\text{univ}}^{\mathcal{T}}/\mathfrak{N}_{\chi}$. The category $\mathcal{C}_{\chi}$ is $\mathds{K}$-linear, rigid symmetric monoidal, additive, Karoubi closed, and satisfies $\text{End}_{\calcatname{C}_{\chi}}(\textbf{1}) \cong \mathds{K}$. The object $V=V^{1,0}$ is a model for $\mathcal{T}$ in $\mathcal{C}_{\chi}$. \\

Finally, the categories of representations of groups stated in the introduction fit this setup by the following proposition. Let $t$ be a type, and $\mathcal{T}$ a theory for this type. Let $V$ be a model for $\mathcal{T}$ in the category $\catname{Vec}_{\mathds{K}}$, and $d=\text{dim}(V)$. Choosing a basis for $V$, the algebraic structure of $V$ takes the form of a tuple of numbers, namely the coordinates of each structure tensor of $V$ expressed in the chosen basis. We interpret this tuple as a point in an affine space. Changing basis, i.e., acting with $GL_d(\mathds{K})$, draws an orbit in this affine space. 
\begin{proposition} (Proposition 5.2 in \cite{meir-interpolation}) \label{proposition-rep-automorphism-group}
If $V$ has closed $GL_d$-orbit, then we have an equivalence 
 of categories $\calcatname{C}_{\chi_{V}}\simeq \catname{Rep}(\text{Aut}(V))$, where the latter is the category of representations of the automorphism group of~$V$.
\end{proposition}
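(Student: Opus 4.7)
The plan is to exhibit a $\mathds{K}$-linear symmetric monoidal functor $F: \calcatname{C}_{\chi_V} \to \catname{Rep}(\text{Aut}(V))$ and then verify that it is an equivalence. By Proposition \ref{proposition-universal-property}, the model $V$ in $\catname{Vec}_{\mathds{K}}$ induces a symmetric monoidal $\mathds{K}$-linear functor $\tilde F: \calcatname{C}_{\text{univ}}^{\mathcal{T}} \to \catname{Vec}_{\mathds{K}}$ sending $V^{a,b}$ to $V^{\otimes a}\otimes (V^{*})^{\otimes b}$. By definition, every $g \in \text{Aut}(V)$ commutes with each structure tensor of $V$ and hence, by induction on diagram complexity, with $\tilde F(f)$ for every morphism $f$; consequently $\tilde F$ refines to a functor into $\catname{Rep}(\text{Aut}(V))$. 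Any $\chi_V$-negligible morphism is, by the trace--pairing characterization, already the zero linear map in $\catname{Vec}_{\mathds{K}}$, so this functor kills $\mathfrak{N}_{\chi_V}$; since $\catname{Rep}(\text{Aut}(V))$ is Karoubi closed, the functor factors through the quotient and the Karoubi envelope, yielding $F: \calcatname{C}_{\chi_V} \to \catname{Rep}(\text{Aut}(V))$.

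Faithfulness is the easy half. If $F([f]) = 0$ then $\tilde F(f) = 0$, so for every $h \in Con^{q,p}$ the scalar $\chi_V(h\circ f)$ equals the trace in $\catname{Vec}_{\mathds{K}}$ of $\tilde F(h)\circ \tilde F(f) = 0$, placing $f$ in the radical of every pairing $pair^{p,q}_{\chi_V}$ and hence giving $[f]=0$ in $\calcatname{C}_{\chi_V}$. For essential surjectivity, the closed-orbit hypothesis together with Matsushima's theorem forces $\text{Aut}(V)$ to be reductive as an algebraic subgroup of $GL_d$. Every finite-dimensional rational $\text{Aut}(V)$-representation is then a direct summand of the restriction of some polynomial $GL_d$-representation, hence of some $V^{\otimes a}\otimes (V^{*})^{\otimes b}$; the projection onto this summand is an equivariant idempotent which, thanks to fullness (below), is hit by an idempotent in $\calcatname{C}_{\chi_V}$, and is then split by Karoubi closure.

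The main obstacle is \emph{fullness}: one must show that every $\text{Aut}(V)$-equivariant linear map $V^{\otimes q}\to V^{\otimes p}$ is $F$ of a morphism in $\calcatname{C}_{\chi_V}$, or equivalently that the invariant subspace $\left(V^{\otimes p}\otimes (V^{*})^{\otimes q}\right)^{\text{Aut}(V)}$ is spanned by realisations of elements of $Con^{p,q}$ via $V$. This is a first-fundamental-theorem-type statement for the stabilizer subgroup $\text{Aut}(V) \subseteq GL_d$ of the tuple of structure tensors of $V$, and it is precisely where the closed-orbit and resulting reductivity hypotheses are essential. The required invariant-theoretic input is that, for a reductive subgroup $H \subseteq GL_d$ realised as the stabilizer of a point with closed orbit, the $H$-invariants in any $GL_d$-tensor representation are generated by contractions (using evaluation, coevaluation, and symmetries) of copies of the structure tensors of $V$ --- which is exactly the content of the diagrammatic calculus on $Con$. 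Combining fullness with faithfulness and essential surjectivity then yields the claimed equivalence.
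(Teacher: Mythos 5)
The paper itself offers no proof of this statement --- it is quoted verbatim as Proposition 5.2 of the cited reference --- so your attempt can only be measured against the known argument there, whose overall architecture (construct $F$ via the universal property, show its kernel is exactly the negligible ideal, use reductivity of $\text{Aut}(V)$ for semisimplicity and essential surjectivity) you reproduce correctly. The genuine gap is at the step you yourself flag as the main obstacle. Fullness --- the claim that the $\text{Aut}(V)$-invariants of $V^{\otimes p}\otimes(V^{*})^{\otimes q}$ are spanned by realisations of elements of $Con^{p,q}$ --- is not "exactly the content of the diagrammatic calculus on $Con$": the calculus only specifies what the diagram realisations \emph{are}, not that they \emph{span} the invariants. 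This first-fundamental-theorem statement for the stabilizer is the entire mathematical substance of the proposition, and it is where the closed-orbit hypothesis actually does work: one identifies $\bigl(V^{\otimes p}\otimes(V^{*})^{\otimes q}\bigr)^{\text{Aut}(V)}$ with a space of $GL_d$-equivariant maps into $\mathds{K}[GL_d\cdot x]$ by Frobenius reciprocity (where $x$ is the point of the structure-constant variety determined by $V$), uses closedness of the orbit so that restriction of polynomial functions from the ambient affine space onto the orbit is surjective, and then observes that those polynomial functions are precisely the tensor contractions encoded by diagrams. Asserting the needed input rather than supplying some such argument leaves the proof incomplete.

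A second, related problem is the logical order. In your first paragraph you claim that any $\chi_V$-negligible morphism is already the zero linear map in $\catname{Vec}_{\mathds{K}}$ "by the trace--pairing characterization". Negligibility only gives $\mathrm{Tr}\bigl(\tilde F(h)\circ\tilde F(f)\bigr)=0$ for $h$ ranging over \emph{diagrams}; to conclude $\tilde F(f)=0$ you need both that the $\tilde F(h)$ span all $\text{Aut}(V)$-equivariant maps (the fullness you only address two paragraphs later) and that the trace pairing restricted to equivariant maps is non-degenerate (semisimplicity of $\catname{Rep}(\text{Aut}(V))$, i.e.\ reductivity again). As written, the well-definedness of $F$ silently presupposes the hardest part of the proof, so the argument should be reordered to establish fullness of $\tilde F$ onto equivariant maps first and only then identify the negligible ideal with the kernel.
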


\end{subsection}

\end{section}
% --- Section: MEIR'S TOOL ---
% --- Section: OPEN-CLOSED BORDISM ---
\begin{section}{Knowledgeable Frobenius algebras and the category $\catname{2Cob}_{O-C}$}
\label{section-open-closed}

Lauda and Pfeiffer \cite{lauda-pfeiffer-1} showed that for any tensor category $\mathcal{D}$, the open-closed oriented TQFTs with values in $\mathcal{D}$ are equivalent to the so-called knowledgeable Frobenius algebra objects in $\mathcal{D}$ (cf. the third line of table \ref{table:1}). We will focus first on the definition of the theory of knowledgeable Frobenius algebras, and then turn to the bordism category $\catname{2Cob}_{O-C}$, the category of oriented open-closed $2$-dimensional cobordisms.

\begin{subsection}{The theory of Frobenius algebras}

Models of knowledgeable Frobenius algebras consists of pairs of \textit{Frobenius algebras}. We thus start by describing the theory $\mathcal{T}_{\text{FA}}$ of Frobenius algebras. It is of type 
\[ t_{\text{FA}}=((1,1),(1,2),(1,0),(2,1),(0,1)), \] 

namely, it consists in the following set of boxes: 
\[ 
\begin{matrix}
    \tikzfig{Frob/id} & \tikzfig{Frob/nabla} & \tikzfig{Frob/u} & \tikzfig{Frob/delta} & \tikzfig{Frob/epsilon} \\
    \text{(Identity)} & \text{(Product)} & \text{(Unit)} & \text{(Coproduct)} & \text{(Counit)} \\
\end{matrix}.
\]

The theory $\mathcal{T}_{\text{FA}}$ can be described by the following axioms.
\[
    \begin{matrix}
        \begin{matrix}
            \tikzfig{Frob/axioms/unit-1} & = & \tikzfig{Frob/axioms/unit-0} & = & \tikzfig{Frob/axioms/unit-2} 
        \end{matrix}
        & \quad &
        \begin{matrix}
            \tikzfig{Frob/axioms/counit-1} & = & \tikzfig{Frob/axioms/counit-0} & = & \tikzfig{Frob/axioms/counit-2} 
        \end{matrix}
        \\
        \text{(Unitality)} & & \text{(Counitality)}
    \end{matrix}
\]
\[
    \begin{matrix}
        \begin{matrix}
            \tikzfig{Frob/axioms/assoc-1} & = &  \tikzfig{Frob/axioms/assoc-2} 
        \end{matrix}
        & \quad &
        \begin{matrix}
            \tikzfig{Frob/axioms/coassoc-1} & = &  \tikzfig{Frob/axioms/coassoc-2} 
        \end{matrix}
        \\
        \text{(Associativity)} & & \text{(Coassociativity)}
    \end{matrix}
\]
\[
    \begin{matrix}
        \begin{matrix}
            \tikzfig{Frob/axioms/frob-1} & = & \tikzfig{Frob/axioms/frob-0} & = & \tikzfig{Frob/axioms/frob-2} 
        \end{matrix}
        \\
        \text{(Frobenius relation)}
    \end{matrix}
\]\\

A model for the theory of Frobenius algebras in a tensor category $\calcatname{D}$ (with unit object $\textbf{1}$) is an object $V$ of $\calcatname{D}$ together with morphisms $\nabla:V\otimes V\rightarrow V$,  $u:\textbf{1}\rightarrow V$, $\Delta:V\rightarrow V\otimes V$, and $\epsilon: V\rightarrow \textbf{1}$, satisfying all the relations listed above: (co)unitality, (co)associativity, and the Frobenius relation. Note that any Frobenius algebra possesses a non-degenerate pairing $\epsilon\circ\nabla:V\otimes V\rightarrow \textbf{1}$ with associated copairing $\Delta\circ u :\textbf{1}\rightarrow V\otimes V$ (the non-degeneracy comes from the Frobenius relation). The counit $\epsilon$ is also called the \textit{Frobenius form}. \\

In the category $\catname{Vec}_{\mathds{K}}$, a Frobenius algebra structure can be equivalently specified by giving a non-degenerate linear form $\epsilon: A\rightarrow \mathds{K}$ to a unital associative algebra $(A,\nabla,u)$, in the sense that $\epsilon\circ\nabla$ is a non-degenerate bilinear pairing (cf. section 2.2 in \cite{kock-book}). \\

The following families of models for $\mathcal{T}_{\text{FA}}$ in $\catname{Vec}_{\mathds{K}}$ are of particular interest to us.

\begin{example} \textbf{The Frobenius algebra $A_{\alpha,\delta}(n)$, with $n\in \mathds{N}$, $\alpha \in \mathds{K}^*$, and $\delta \in \mathds{K}$.} We consider the category $\catname{Vec}_{\mathds{K}}$. We denote by $A_{\alpha,\delta}(n)$ the $(n+2)$-dimensional vector space spanned by $\{1, a, a_{1}, ..., a_{n}\}$ and having the following Frobenius algebra structure: 
\begin{itemize}
    \item \underline{Unit} $u:\mathds{K}\rightarrow A_{\alpha,\delta}(n)$: $\qquad u(1) = 1$.
    \item \underline{Product} $\nabla:A_{\alpha,\delta}(n)\otimes A_{\alpha,\delta}(n)\rightarrow A_{\alpha,\delta}(n)$:
        \[
            \nabla(a_i\otimes a) = \nabla(a\otimes a_i) = \nabla(a\otimes a) = 0,
            \qquad 
            \nabla(a_i\otimes a_j) = 
            \left\{ 
            \begin{array}{l}
                a \text{ if } i=j \\
                0 \text{ else}
            \end{array} 
            \right..
        \]
    \item \underline{Counit} $\epsilon: A_{\alpha,\delta}(n)\rightarrow \mathds{K} $: $\qquad \epsilon(1) = \delta, \qquad \epsilon(a)=\alpha, \qquad \epsilon(a_i)=0$.
    \item \underline{Coproduct} $\Delta:A_{\alpha,\delta}(n)\rightarrow A_{\alpha,\delta}(n)\otimes A_{\alpha,\delta}(n)$: 
        \[
            \Delta(1) = \frac{1}{\alpha}\left(1\otimes a + a \otimes 1 + \sum_{i=1}^{n} a_i \otimes a_{i}\right) - \frac{\delta}{\alpha^2} a\otimes a,
        \]
        \[ 
            \Delta(a) = \frac{1}{\alpha}a\otimes a, \qquad
            \Delta(a_i) = \frac{1}{\alpha}\left(a_i\otimes a + a\otimes a_i\right).
        \]
        
\end{itemize}

The Frobenius algebra $A_{\alpha,\delta}(n)$ is a \textit{commutative} Frobenius algebra, in the sense that its multiplication is commutative: $\forall x,y\in A_{\alpha,\delta}(n)$, we have $\nabla(x\otimes y)=\nabla(y\otimes x)$. More generally, we have: 
    
\end{example}

\begin{example} \textbf{The Frobenius algebra $A_{\alpha,\delta}(V)$}, with $\alpha \in \mathds{K}^*$, $\delta \in \mathds{K}$, and $V$ an object with a non-degenerate (symmetric) pairing $c:V\otimes V\rightarrow \textbf{1}$ in an additive tensor category with unit object $\textbf{1}$ (cf. example 2.1 in \cite{kok}). We define $A_{\alpha,\delta}(V):=U\oplus V \oplus N$, where $U\xleftarrow{u} \textbf{1} \xrightarrow{n} N$ are isomorphisms. The Frobenius algebra structure is given by:  
\begin{itemize}
    \item \underline{Unit} $\textbf{1}\xrightarrow{u} U$.
    \item \underline{Product} $\nabla|_{V\otimes V}= V\otimes V \xrightarrow{c}\textbf{1}\xrightarrow{n}N$, $\qquad \nabla|_{N\otimes V}$, $\nabla|_{V\otimes N}$, and $\nabla|_{N\otimes N}$ are $0$.
    \item \underline{Counit} $\epsilon|_{U}= \delta u^{-1}$, $\qquad \epsilon|_{V}= 0$, $\qquad \epsilon|_{N}= \alpha n^{-1}$.
    \item \underline{Coproduct} Writing $\eta:\textbf{1}\rightarrow V\otimes V$ to be the copairing of $c$: 
        \begin{multline*}
            \Delta|_U = 
            \frac{1}{\alpha}\left[ \vphantom{\frac{\delta}{\alpha^2}}
            (U\cong U\otimes \textbf{1} \xrightarrow{\mathds{1}\otimes n} U\otimes N )
            + (U\cong \textbf{1} \otimes U \xrightarrow{n\otimes\mathds{1}} N\otimes U ) \right.
            \\ + \left. (U\xrightarrow{u^{-1}} \textbf{1} \xrightarrow{\eta} V\otimes V )
            \right] 
            - \frac{\delta}{\alpha^2} \left(U\xrightarrow{u^{-1}} \textbf{1} \cong  \textbf{1} \otimes \textbf{1} \xrightarrow{n\otimes n} N\otimes N \right),
        \end{multline*}
        \[ 
            \Delta|_V = \frac{1}{\alpha}\left[ (V\cong \textbf{1}\otimes V \xrightarrow{n\otimes \mathds{1}} N\otimes V)
            + (V\cong V\otimes \textbf{1} \xrightarrow{\mathds{1}\otimes n} V\otimes N) \right],
        \]
        \[
            \Delta|_N = \frac{1}{\alpha}\left(N\cong \textbf{1}\otimes N \xrightarrow{n\otimes\mathds{1}}N\otimes N\right).
        \]
\end{itemize}
If $c$ is symmetric, then $A_{\alpha,\delta}(V)$ is commutative. If $V$ has dimension $d$, then $A_{\alpha,\delta}(V)$ has dimension $d+2$. 
\end{example}

\begin{example}\textbf{The Frobenius algebra $F_{\alpha}$, with $\alpha\in\mathds{K}^*$.} We consider the category $\catname{Vec}_{\mathds{K}}$. We denote by $F_{\alpha}$ the $1$-dimensional vector space $\mathds{K}$ (with basis $\{1\})$ with the following Frobenius algebra structure:
\begin{itemize}
    \item \underline{Unit} $u:\mathds{K}\rightarrow \mathds{K}$: $\qquad u(1) = 1$.
    \item \underline{Product} $\nabla:\mathds{K}\otimes \mathds{K}\rightarrow \mathds{K}$: $\qquad \nabla(1 \otimes 1) = 1$.
    \item \underline{Counit} $\epsilon: \mathds{K}\rightarrow \mathds{K} $: $\qquad \epsilon(1) = \alpha$.
    \item \underline{Coproduct} $\Delta:\mathds{K}\rightarrow \mathds{K}\otimes \mathds{K}$: $\qquad \Delta(1)=\frac{1}{\alpha}~1\otimes 1$.
\end{itemize}
This forms also a commutative Frobenius algebra. More generally:  
\end{example}

\begin{example}\textbf{The Frobenius algebra $F_{\alpha}(n)$, with $\alpha\in\mathds{K}^*$ and $n\in\mathds{N}$.} We consider the category $\catname{Vec}_{\mathds{K}}$, and in it the algebra of $n\times n$ matrices with $\mathds{K}$-valued entries. We take $\{e_{ij} : i,j=1,...,n\}$ as a basis, where $e_{ij}$ has entries equal to $0$ except for the component $(i,j)$ where it is equal to $1$. Together with the following structure, this forms a Frobenius algebra which we denote $F_{\alpha}(n)$.
\begin{itemize}
    \item \underline{Unit} $u:\mathds{K}\rightarrow F_{\alpha}(n)$: the identity $n\times n$ matrix.
    \item \underline{Product} $\nabla:F_{\alpha}(n)\otimes  F_{\alpha}(n)\rightarrow F_{\alpha}(n)$: the usual matrix product.
    \item \underline{Counit} $\epsilon: F_{\alpha}(n)\rightarrow \mathds{K} $: $\qquad \epsilon(e_{ij}) = \left\{\begin{array}{ll}\alpha & \text{ if } i=j \\ 0 & \text{ else} \end{array}\right.$.
    \item \underline{Coproduct} $\Delta:F_{\alpha}(n)\rightarrow F_{\alpha}(n)\otimes F_{\alpha}(n)$: $\qquad \Delta(e_{ij})=\frac{1}{\alpha}\sum_{l=1}^n e_{il}\otimes e_{lj}$.
\end{itemize}
For $n=0$, this gives the trivially null structure. For $n=1$, we have the equality $F_{\alpha}(1)=F_{\alpha}$. For $n\geqslant 2$, $F_{\alpha}(n)$ is not commutative, but is nonetheless \textit{symmetric}, in the sense that the pairing $\epsilon\circ\nabla: F_{\alpha}(n)\otimes F_{\alpha}(n)\rightarrow \mathds{K}$ is symmetric: $\forall x,y\in F_{\alpha}(n)$, we have $(\epsilon\circ\nabla)(x\otimes y)=(\epsilon\circ\nabla)(y\otimes x)$. There exists a generalisation of this family which allows, in some sense, for $n$ to take general values in $\mathds{K}$. This more general family is described in appendix \ref{section-annex-gl}.
\end{example}

The next proposition shows that any symmetric Frobenius algebra structure on the algebra of $n\times n$ matrices is on the form $F_{\alpha}(n)$ for some $\alpha\in\mathds{K}^*$.

\begin{proposition} \label{proposition-relation-symmetric-FA}
    Let $(A,\nabla,u)$ be a unital associative algebra in $\catname{Vec}_{\mathds{K}}$ with center $Z(A)$, and with two symmetric Frobenius algebra structures specified by symmetric Frobenius forms $\epsilon, \epsilon':A\rightarrow \mathds{K}$. Then $\exists a\in Z(A)$ such that $\forall x\in A, \epsilon(x)=\epsilon'(a x)$.
\end{proposition}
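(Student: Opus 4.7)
The plan is to use the non-degeneracy of the symmetric bilinear pairing $\epsilon'(-\cdot -):A\otimes A\rightarrow \mathds{K}$ to produce the element $a$ representing the linear form $\epsilon$, and then exploit the symmetry of both $\epsilon$ and $\epsilon'$ to show that $a$ commutes with everything.

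More precisely, I would proceed as follows. First, observe that because $\epsilon'$ is a Frobenius form, the pairing $(x,y)\mapsto \epsilon'(xy)$ is non-degenerate, so it induces a linear isomorphism $A\xrightarrow{\sim} A^{*}$ sending $a\in A$ to the linear form $x\mapsto \epsilon'(ax)$. The linear form $\epsilon:A\rightarrow\mathds{K}$ therefore corresponds to a unique $a\in A$ such that $\epsilon(x)=\epsilon'(ax)$ for every $x\in A$. It remains to verify that this distinguished $a$ lies in $Z(A)$.

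Next, fix arbitrary $x,y\in A$ and compute $\epsilon(xy)$ in two ways. On the one hand, $\epsilon(xy)=\epsilon'(a(xy))=\epsilon'((ax)y)$ by associativity. On the other hand, using the symmetry of $\epsilon$, we have $\epsilon(xy)=\epsilon(yx)=\epsilon'(a(yx))=\epsilon'((ay)x)$. Now apply the symmetry of $\epsilon'$ to the first expression: $\epsilon'((ax)y)=\epsilon'(y(ax))=\epsilon'((ya)x)$. Comparing the two expressions yields
\[ \epsilon'((ya-ay)x) \;=\; 0 \quad \text{for all } x\in A. \]
By the non-degeneracy of the pairing $\epsilon'(-\cdot -)$, this forces $ya-ay=0$ for every $y\in A$, i.e., $a\in Z(A)$, as required.

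I do not expect any real obstacle; the argument is a direct manipulation of the Frobenius pairings. The only subtlety to keep track of is distinguishing the two symmetries being used: the symmetry of $\epsilon$ at the level $\epsilon(xy)=\epsilon(yx)$ is what allows the exchange of $x$ and $y$, while the symmetry of $\epsilon'$ at the level $\epsilon'(uv)=\epsilon'(vu)$ is what moves the factor $ax$ past $y$. Together with associativity of $\nabla$, these two ingredients produce the element $ya-ay$ on which non-degeneracy of $\epsilon'$ can be invoked.
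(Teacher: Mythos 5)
Your proof is correct and takes the same approach as the paper: the element $a$ you construct via the isomorphism $A\xrightarrow{\sim}A^{*}$ induced by the non-degenerate pairing $\epsilon'(-\cdot-)$ is exactly the element the paper defines through the copairing $\gamma'$ of $\epsilon'\circ\nabla$. The paper's proof merely exhibits this element without checking centrality, so your verification that $ya-ay=0$ (using the symmetry of both forms and non-degeneracy of $\epsilon'$) supplies the detail the paper omits.
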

\begin{proof}
    Let $\gamma':\mathds{K}\rightarrow A\otimes A$ be the copairing associated to the pairing $\epsilon'\circ\nabla$. The element we are looking for is $a:=\gamma'\circ (\mathds{1}_A\otimes \epsilon)$.
\end{proof}

\end{subsection}

\begin{subsection}{The theory of knowledgeable Frobenius algebras}
\label{subsection-KFA}
    
The theory of KFAs is of type 
\[ t_{\text{KFA}}=((1,1))\sqcup((1,1),(1,1))\sqcup t_{\text{FA}}\sqcup t_{\text{FA}}, \]
namely, it contains two copies of Frobenius algebra boxes, which we distinguish by adding a subscript $I$ for the boxes of one set, called the \textit{open sector}, and a subscript $S^1$ for the boxes of the other set, called the \textit{closed sector}. The three new boxes are the following:
\[ 
\begin{matrix}
    \tikzfig{KFrob/id} & \tikzfig{KFrob/iota} & \tikzfig{KFrob/iota-star} \\
    \text{(Identity)} & \text{(Zipper)} & \text{(Cozipper)} \\
\end{matrix}.
\]

In addition to the axioms of $\mathcal{T}_{\text{FA}}$ for each of the copies of $t_{\text{FA}}$, the theory $\mathcal{T}_{\text{KFA}}$ also consists of:
\[
    \begin{matrix}
        \begin{matrix}
            \begin{matrix}
                \tikzfig{KFrob/id} & = & \tikzfig{KFrob/I-id}  + \tikzfig{KFrob/S-id} 
            \end{matrix}
            & \quad &
            \begin{matrix}
                \tikzfig{KFrob/axioms/disjoint-1} & = & 0 & = & \tikzfig{KFrob/axioms/disjoint-2} 
            \end{matrix}
        \end{matrix} \\
        \text{(A knowledgeable Frobenius algebra is made of two disjoint Frobenius algebras)}
    \end{matrix}
\]
\[
    \begin{matrix}
        \begin{matrix}
            \begin{matrix}
                \tikzfig{KFrob/axioms/com-1} & = & \tikzfig{KFrob/axioms/com-2}  
            \end{matrix}
            & \quad &
            \begin{matrix}
                \tikzfig{KFrob/axioms/cocom-1} & = & \tikzfig{KFrob/axioms/cocom-2}  
            \end{matrix} \\
            \text{(Commutativity)} & & \text{(Cocommutativity)}
        \end{matrix} \\
        \text{(The closed sector forms a commutative Frobenius algebra)}
    \end{matrix}
\]
\[
    \begin{matrix}
        \begin{matrix}
            \begin{matrix}
                \tikzfig{KFrob/axioms/sym-1} & = & \tikzfig{KFrob/axioms/sym-2}  
            \end{matrix}
            & \quad &
            \begin{matrix}
                \tikzfig{KFrob/axioms/cosym-1} & = & \tikzfig{KFrob/axioms/cosym-2}  
            \end{matrix} \\
            \text{(Symmetricity)} & & \text{(Cosymmetricity)}
        \end{matrix} \\
        \text{(The open sector forms a symmetric Frobenius algebra)}
    \end{matrix}
\]
\[
    \begin{matrix}
        \begin{matrix}
            \tikzfig{KFrob/axioms/compatibilite-iota-3} & = & \tikzfig{KFrob/axioms/compatibilite-iota-2} & = & \tikzfig{KFrob/axioms/compatibilite-iota-1} 
        \end{matrix}
        & \quad &
        \begin{matrix}
            \tikzfig{KFrob/axioms/compatibilite-iota-star-3} & = & \tikzfig{KFrob/axioms/compatibilite-iota-star-2} & = & \tikzfig{KFrob/axioms/compatibilite-iota-star-1} 
        \end{matrix} \\
        \text{(}\iota\text{ goes from closed to open sector)} & & \text{(}\iota^*\text{ goes from open to closed sector)}
    \end{matrix} 
\]

\[
    \begin{matrix}
        \begin{matrix} 
            \tikzfig{KFrob/axioms/hom-1} & = & \tikzfig{KFrob/I-u} & \quad \quad & \tikzfig{KFrob/axioms/hom-3} & = & \tikzfig{KFrob/axioms/hom-4}
        \end{matrix} \\
        \text{(} \iota \text{ is an algebra homomorphism)}
    \end{matrix}
\]
\[
    \begin{matrix}
        \begin{matrix}
            \tikzfig{KFrob/axioms/dual-1} & = & \tikzfig{KFrob/axioms/dual-2}
        \end{matrix} 
        & \quad & 
        \begin{matrix}
            \tikzfig{KFrob/axioms/knowledge-1} & = & \tikzfig{KFrob/axioms/knowledge-2}
        \end{matrix}
         & \quad & 
        \begin{matrix}
            \tikzfig{KFrob/axioms/cardy-1} & = & \tikzfig{KFrob/axioms/cardy-2}
        \end{matrix} \\
        \text{(} \iota^* \text{ is dual to } \iota \text{)} & & \text{("Knowledge" about the center)} & & \text{(Cardy relation)}
    \end{matrix}
\]

A model for the theory $\mathcal{T}_{\text{KFA}}$ will thus be an object in a tensor category $\calcatname{D}$ together with two "disjoint" Frobenius algebra structures talking to one another via specified zipper and cozipper endomorphisms, satisfying the above axioms. 

\begin{remark} \label{remark-idempotent-completion} If $\calcatname{D}$ is idempotent-complete, such as $\catname{Vec}_{\mathds{K}}$, it is often useful to think of a model for $\mathcal{T}_{\text{KFA}}$ as a quadruple $(V_I, V_{S^1}, \iota, \iota^*)$ where $V_I$ is a symmetric Frobenius algebra object in $\calcatname{D}$, $V_{S^1}$ is a commutative Frobenius algebra object in $\calcatname{D}$, and $\iota : V_{S^1} \rightleftarrows V_I : \iota^*$ are morphisms in $\calcatname{D}$ satisfying all the relations above. For that reason, we will also refer to any such quadruple in any tensor category $\calcatname{D}$ as a knowledgeable Frobenius algebra (we can always go in the additive closure of $\calcatname{D}$). 
\end{remark}

In light of this remark, we will now consider interchangeably the category $\calcatname{C}_{\text{univ}}^{\mathcal{T}_{\text{KFA}}}$ and its idempotent completion $(\calcatname{C}_{\text{univ}}^{\mathcal{T}_{\text{KFA}}})^{\text{IC}}$. The generating object $V$ decomposes into the direct sum of two objects $V_I\oplus V_{S^1}$ encapsulating the open and the closed sectors. 

\begin{example} Every commutative Frobenius algebra object $V$ in a tensor category $\calcatname{D}$ can be trivially upgraded to become a KFA object in $\calcatname{D}$ by realising the boxes of the open sector (including $\mathds{1}_I$) as the $0$ morphisms of the appropriate hom-spaces. If the tensor category has a $\textbf{0}$ object, it is equivalently thought of as taking the KFA to be the quadruple $(V_I,V_{S^1},\iota,\iota^*)=(\textbf{0},V,0,0)$.
\end{example}

\begin{example} \label{example-semisimple} \textbf{A semisimple KFA.} Consider the quadruple $(F_{\alpha}(n),F_{\alpha^2}, \iota, \iota^*)$ where $\alpha\in \mathds{K}^*$, $n\in\mathds{N}$, and the zipper and cozipper are given by:
\begin{itemize}
    \item \underline{Zipper} $\iota : F_{\alpha^2} \rightarrow F_{\alpha}(n)$: $\qquad \iota(1)=\sum_{i=1}^n e_{ii}$.
    \item \underline{Cozipper} $\iota^* : F_{\alpha}(n) \rightarrow F_{\alpha^2}$: $\qquad \iota^*(e_{ij})=\left\{\begin{array}{ll}\frac{1}{\alpha} & \text{ if i=j} \\ 0 & \text{ else} \end{array}\right.$.
\end{itemize}
This defines a KFA in the category $\catname{Vec}_{\mathds{K}}$. As before, this example can be generalised so as to allow more values for $n$, for instance $n\in\mathds{K}$ (cf. appendix \ref{section-annex-gl}).
\end{example}

\begin{example} \label{example-non-semisimple} \textbf{A non-semisimple KFA.} Let $p\in\mathds{N}, q\in\mathds{N}^*, \alpha\in\mathds{K}^*, \delta,\sigma\in\mathds{K}$, and $\beta=\frac{\alpha^2}{p+2}$. Consider the quadruple $(V_I,V_{S^1},\iota,\iota^*)$ where $V_I=A_{\alpha,\delta}(p)$ has basis $\{1, a, a_{1}, ..., a_{p}\}$; $V_{S^1} = A_{\beta,\sigma}(q)$ has basis $\{1, b, b_{1}, ..., b_{q}\}$; and where the zipper and cozipper are given by:
\begin{itemize}
    \item \underline{Zipper} $\iota : V_{S^1} \rightarrow V_I$: $\qquad \iota(1)=1$, $\qquad \iota(b)=0$, $\qquad \iota(b_i)=\left\{\begin{array}{ll}a & \text{ if i=q} \\ 0 & \text{ else} \end{array}\right.$.
    \item \underline{Cozipper} $\iota^* : V_I \rightarrow V_{S^1}$: 
    $\quad \iota^*(1)=\frac{1}{\beta}(\delta b + \alpha b_q), \qquad \iota^*(a)= \frac{\alpha}{\beta} b, \qquad \iota^*(a_i)=0.$
\end{itemize}
More generally: 
\end{example}

\begin{example} \label{example-non-semisimple-general} Let $V$ and $W$ be objects endowed with non-degenerate symmetric pairings $c_V$ and $c_W$ in an additive tensor category. Let $\alpha,\beta\in\mathds{K}^*, \delta,\sigma\in\mathds{K}$, and $\gamma \in \mathds{K}$ such that $\gamma^2=(2+\text{dim}V)\frac{\beta}{\alpha^2}$. Consider the quadruple $(V_I,V_{S^1},\iota,\iota^*)$ where $V_I=A_{\alpha,\delta}(V)=U_I \oplus V \oplus N_I$; $V_{S^1} = A_{\beta,\sigma}(W')= U_S\oplus W'\oplus N_S$ with $W':=W\oplus \textbf{1}$ is endowed with the non-degenerate symmetric pairing $c_W'=c_W\oplus 1$ (the latter morphism being the non-degenerate symmetric pairing on $\textbf{1}$ given by the natural isomorphism $\textbf{1}\otimes \textbf{1}\cong \textbf{1}$); and where the zipper and cozipper are given by:
\begin{itemize}
    \item \underline{Zipper} $V_{S^1} \xrightarrow{\iota} V_I = (U_S \cong U_I) + \gamma (\textbf{1}\cong N_I)$,
    \item \underline{Cozipper} $V_I \xrightarrow{\iota^*} V_{S^1} = \frac{1}{\beta}\left[ \delta(U_I\cong N_S) + \alpha(N_I\cong N_S) + \alpha \gamma (U_I\cong \textbf{1}) \right]$,
\end{itemize}
where the isomorphisms involved correspond to the structure morphisms in the definitions of $V_I$ and $V_{S^1}$.
Note that if $\text{dim}V=-2$, then $\gamma=0$, and we can construct the KFA above using simply $W$ instead of $W'$.
\end{example}

\begin{proposition} \label{proposition-sum-product-KFAs} (Sum and product of KFAs)
    Let $(V,(x))=(V_I,V_{S^1}, (x))$ and $(W,(y))=(W_I,W_{S^1},(y))$ be models of KFAs in an idempotent-complete tensor category $\calcatname{D}$ (we have regrouped all structure tensors of each model under the labels $(x)$ and $(y)$ for simplicity). Then, their \textit{sum} and \textit{product}, defined as follow, are also models of KFAs in $\calcatname{D}$:
    \begin{itemize}
        \item $(V_I,V_{S^1}, (x))+(W_I,W_{S^1}, (y))=(V_I\oplus W_I,V_{S^1}\oplus W_{S^1}, (x\oplus y))$,
        \item $(V_I,V_{S^1}, (x)) \cdot(W_I,W_{S^1}, (y))=(V_I\otimes W_I,V_{S^1}\otimes W_{S^1}, (x\otimes y))$.
    \end{itemize}
    \begin{proof}
        The axioms of $\mathcal{T}_{\text{KFA}}$ are directly verified for the sum and the product. This is a consequence of the fact that each axiom but one are in the form $D=D'$ where $D$ and $D'$ are connected diagrams. The remaining axiom can be checked by hand. 
    \end{proof}
\end{proposition}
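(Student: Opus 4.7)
My plan is to verify each axiom of $\mathcal{T}_{\text{KFA}}$ for both the sum and the product constructions. The proof hinges on the observation already noted in the proof sketch: every axiom but one comes in the form $D = D'$ with $D, D'$ \emph{connected} diagrams built from the structure tensors; the sole exception is the identity-decomposition axiom $\mathds{1} = \mathds{1}_I + \mathds{1}_{S^1}$, which I will handle by direct inspection.

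For the sum, I would first establish the following intermediate claim: any connected diagram $D$ realized on $(V_I \oplus W_I,\, V_{S^1} \oplus W_{S^1},\, (x \oplus y))$ equals the direct sum $D_V \oplus D_W$ of its realizations on $V$ and $W$ separately. Indeed, each structure tensor of the sum is by construction block-diagonal with respect to the decomposition $V \oplus W$; composition of block-diagonal morphisms stays block-diagonal, and while tensoring two block-diagonal morphisms introduces four blocks \emph{a priori}, connectedness of $D$ forces the two off-diagonal ones to vanish, since any path through the boxes must remain in a single summand. Granted this claim, each connected axiom $D = D'$ on the sum reduces to the pair $D_V = D_V'$ and $D_W = D_W'$, both of which hold by hypothesis. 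The identity-decomposition axiom is then verified summand-by-summand.

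For the product, the analogous claim is that any connected diagram $D$ realized on $(V_I \otimes W_I,\, V_{S^1} \otimes W_{S^1},\, (x \otimes y))$ equals $D_V \otimes D_W$ once one identifies $(V_\bullet \otimes W_\bullet)^{\otimes n}$ with $V_\bullet^{\otimes n} \otimes W_\bullet^{\otimes n}$ via the symmetric braiding. I would prove this by induction on the recursive construction of a connected diagram from composition, tensoring, braiding, and the duality structure, invoking the coherence theorem for symmetric monoidal categories to ensure that the braiding-mediated identifications introduced along the way glue together consistently. Again the identity-decomposition axiom is verified directly.

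The principal obstacle I anticipate is the tensor-product case: the sum is essentially a block-matrix argument, whereas the product requires threading symmetric braidings through an arbitrary connected diagram. I expect the cleanest formulation is to associate to each connected diagram a canonical ``interleaving'' isomorphism and to verify that the realization on the product is conjugate, via this isomorphism, to the tensor product of the two separate realizations; coherence then guarantees this is well-defined regardless of how the diagram is built up inductively.
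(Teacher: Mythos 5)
Your proposal is correct and takes essentially the same route as the paper's (very terse) proof: the paper likewise reduces every axiom except the identity decomposition $\mathds{1}=\mathds{1}_I+\mathds{1}_{S^1}$ to the fact that connected diagrams realize on the sum (resp.\ product) as the direct sum (resp.\ interleaved tensor product) of their separate realizations, and checks that remaining axiom by hand. Your write-up merely supplies the block-diagonal and coherence details that the paper leaves implicit.
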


\end{subsection}

\begin{subsection}{The category $\catname{2Cob}_{O-C}$}

The category $\catname{2Cob}_{O-C}$ is the category whose objects are disjoint unions of circles $S^1$ and unit closed intervals $I=[0,1]$, and whose morphisms are cobordisms between them, namely, compact oriented smooth surfaces with corners, with specified boundaries. We thus get a category where the cobordisms have boundaries separated into two kinds: the usual in- and out-boundaries, which are disjoint unions of circles and unit intervals, and some "internal" boundaries. A precise definition can be found in \cite{lauda-pfeiffer-1}, in which it is shown that this category can be understood as the free symmetric monoidal category generated by specific sets of objects $O$, morphisms $M$, and relations $R$ (cf. theorem 4.2 in \cite{lauda-pfeiffer-1}). We have $O=\{S^1, I\}$, and $M$ and $R$ are given below. For convenience, we draw cobordisms horizontally and read them from left to right. \\

$\quad$ \underline{Morphisms}:

\[
\begin{matrix}
    \raisebox{-0.5\height}{\includegraphics[scale=0.5]{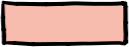}} &
    \raisebox{-0.5\height}{\includegraphics[scale=0.5]{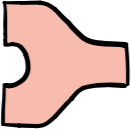}} &
    \raisebox{-0.5\height}{\includegraphics[scale=0.5]{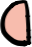}} & 
    \raisebox{-0.5\height}{\includegraphics[scale=0.5]{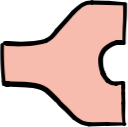}} & 
    \raisebox{-0.5\height}{\includegraphics[scale=0.5]{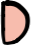}} 
    \\
    \text{(Identity }\mathds{1}_I\text{)} &
    \text{(Product }\nabla_I\text{)} &
    \text{(Unit } u_I\text{)} &
    \text{(Coproduct }\Delta_I\text{)} &
    \text{(Counit }\epsilon_I\text{)}
    \\
    & & & & \\
    \raisebox{-0.5\height}{\includegraphics[scale=0.5]{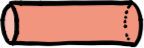}} &
    \raisebox{-0.5\height}{\includegraphics[scale=0.5]{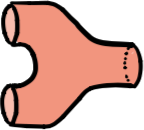}} &
    \raisebox{-0.5\height}{\includegraphics[scale=0.5]{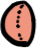}} & 
    \raisebox{-0.5\height}{\includegraphics[scale=0.5]{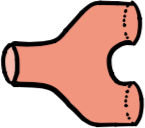}} & 
    \raisebox{-0.5\height}{\includegraphics[scale=0.5]{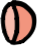}} 
    \\
    \text{(Identity }\mathds{1}_{S^1}\text{)} &
    \text{(Product }\nabla_{S^1}\text{)} &
    \text{(Unit } u_{S^1}\text{)} &
    \text{(Coproduct }\Delta_{S^1}\text{)} &
    \text{(Counit }\epsilon_{S^1}\text{)} 
\end{matrix}
\]
\[
\begin{matrix}
    \raisebox{-0.5\height}{\includegraphics[scale=0.5]{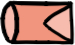}} & \vspace{0.5em} &
    \raisebox{-0.5\height}{\includegraphics[scale=0.5]{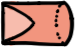}} 
    \\
    \text{(Zipper }\iota\text{)} & \qquad &
    \text{(Cozipper }\iota^{*}\text{)} 
\end{matrix} \vspace{0.5em}
\]

$\quad$ \underline{Relations}:

\begin{gather*}
    \begin{matrix} 
        \raisebox{-0.4\height}{\includegraphics[scale=0.5]{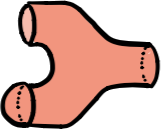}} & = & \raisebox{-0.4\height}{\includegraphics[scale=0.5]{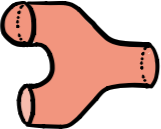}} & = & \raisebox{-0.3\height}{\includegraphics[scale=0.5]{figures/dessins/morphisms/id-S.png}} & = & \raisebox{-0.4\height}{\includegraphics[scale=0.5]{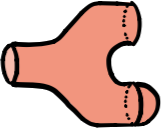}} & = & \raisebox{-0.4\height}{\includegraphics[scale=0.5]{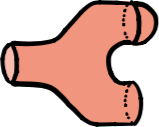}}
    \end{matrix}
    \\
    \begin{matrix} 
        \raisebox{-0.45\height}{\includegraphics[scale=0.5]{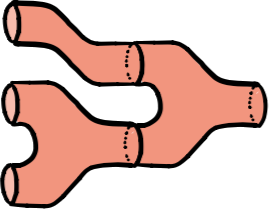}} & = & \raisebox{-0.45\height}{\includegraphics[scale=0.5]{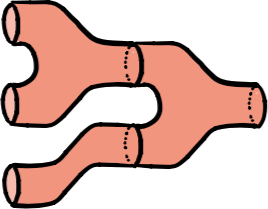}} & \quad & \raisebox{-0.45\height}{\includegraphics[scale=0.5]{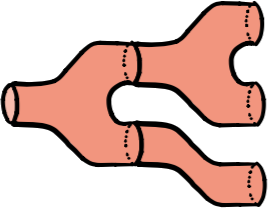}} & = & \raisebox{-0.45\height}{\includegraphics[scale=0.5]{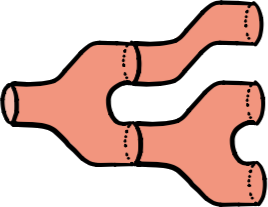}}
    \end{matrix}
    \\
    \begin{matrix} 
        \raisebox{-0.45\height}{\includegraphics[scale=0.5]{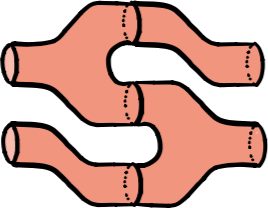}} & = & \raisebox{-0.45\height}{\includegraphics[scale=0.5]{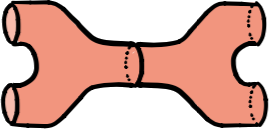}} & = & \raisebox{-0.45\height}{\includegraphics[scale=0.5]{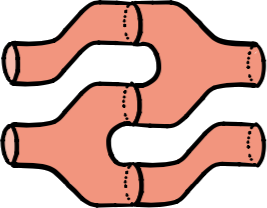}}
    \end{matrix}
    \\
    \begin{matrix}
        \begin{matrix} 
            \raisebox{-0.4\height}{\includegraphics[scale=0.5]{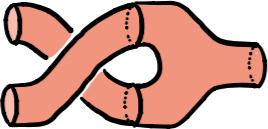}} & = & \raisebox{-0.4\height}{\includegraphics[scale=0.5]{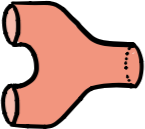}} & \quad & \raisebox{-0.4\height}{\includegraphics[scale=0.5]{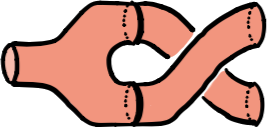}} & = & \raisebox{-0.4\height}{\includegraphics[scale=0.5]{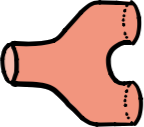}}
        \vspace{0.5em} \end{matrix}
        \\
        \text{(The closed sector forms a commutative Frobenius algebra)}
    \end{matrix}
\end{gather*}

\begin{gather*}
    \begin{matrix} 
        \raisebox{-0.4\height}{\includegraphics[scale=0.5]{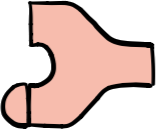}} & = & \raisebox{-0.4\height}{\includegraphics[scale=0.5]{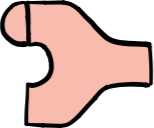}} & = & \raisebox{-0.3\height}{\includegraphics[scale=0.5]{figures/dessins/morphisms/id-I.png}} & = & \raisebox{-0.4\height}{\includegraphics[scale=0.5]{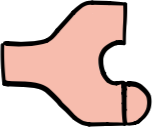}} & = & \raisebox{-0.4\height}{\includegraphics[scale=0.5]{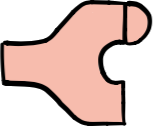}}
    \end{matrix}
    \\
    \begin{matrix} 
        \raisebox{-0.45\height}{\includegraphics[scale=0.5]{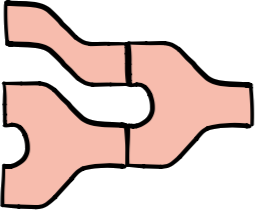}} & = & \raisebox{-0.45\height}{\includegraphics[scale=0.5]{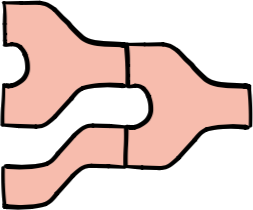}} & \quad & \raisebox{-0.45\height}{\includegraphics[scale=0.5]{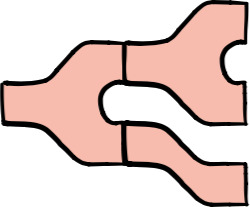}} & = & \raisebox{-0.45\height}{\includegraphics[scale=0.5]{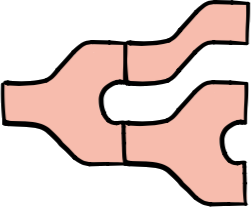}}
    \end{matrix}
    \\
    \begin{matrix} 
        \raisebox{-0.45\height}{\includegraphics[scale=0.5]{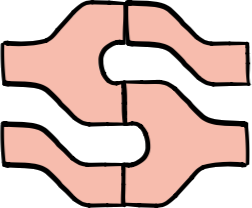}} & = & \raisebox{-0.45\height}{\includegraphics[scale=0.5]{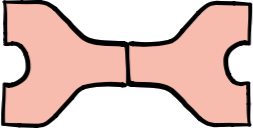}} & = & \raisebox{-0.45\height}{\includegraphics[scale=0.5]{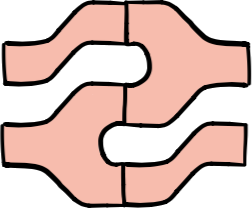}}
    \end{matrix}
    \\
    \begin{matrix}
        \begin{matrix} 
            \raisebox{-0.4\height}{\includegraphics[scale=0.5]{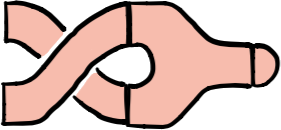}} & = & \raisebox{-0.4\height}{\includegraphics[scale=0.5]{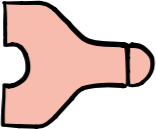}} & \quad & \raisebox{-0.4\height}{\includegraphics[scale=0.5]{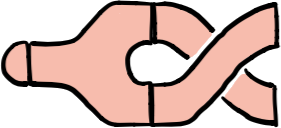}} & = & \raisebox{-0.4\height}{\includegraphics[scale=0.5]{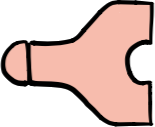}}
        \vspace{0.5em} \end{matrix}
        \\
        \text{(The open sector forms a symmetric Frobenius algebra)}
    \end{matrix}
\end{gather*}

\begin{gather*}
    \begin{matrix}
        \begin{matrix} 
            \raisebox{-0.3\height}{\includegraphics[scale=0.5]{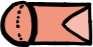}} & = & \raisebox{-0.3\height}{\includegraphics[scale=0.5]{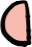}} & \quad \quad & \raisebox{-0.4\height}{\includegraphics[scale=0.5]{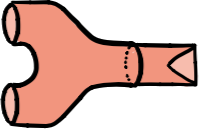}} & = & \raisebox{-0.4\height}{\includegraphics[scale=0.5]{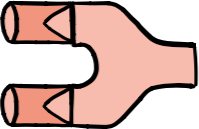}}
        \vspace{0.5em} \end{matrix} 
        \\
        \text{(} \iota \text{ is an algebra homomorphism)}
    \end{matrix}
\end{gather*}

\begin{gather*}
    \begin{matrix}
        \begin{matrix}
            \raisebox{-0.4\height}{\includegraphics[scale=0.5]{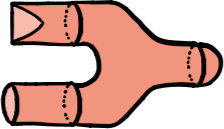}} & = & \raisebox{-0.4\height}{\includegraphics[scale=0.5]{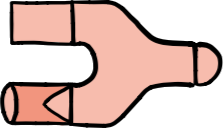}}
        \end{matrix} 
        & \qquad & 
        \begin{matrix}
            \raisebox{-0.4\height}{\includegraphics[scale=0.5]{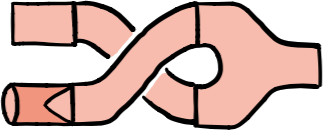}} & = & \raisebox{-0.4\height}{\includegraphics[scale=0.5]{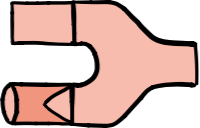}}
        \end{matrix} \vspace{0.5em}
        \\
        \text{(} \iota^* \text{ is dual to } \iota \text{)} & & \text{("Knowledge" about the center)}
    \end{matrix}
\end{gather*}

\begin{gather*}
    \begin{matrix}
        \begin{matrix}
            \raisebox{-0.4\height}{\includegraphics[scale=0.5]{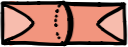}} & = & \raisebox{-0.4\height}{\includegraphics[scale=0.5]{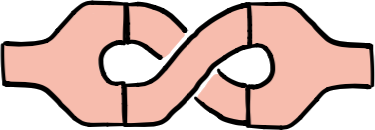}}
        \end{matrix} \vspace{0.5em}
        \\
        \text{(Cardy relation)}
    \end{matrix}
\end{gather*}
We have labelled the morphisms and relations in order to make apparent that $\catname{2Cob}_{O-C}$ is generated by a knowledgeable Frobenius algebra. In fact, in light of remark \ref{remark-idempotent-completion}, the idempotent completion $(\mathcal{C}_{\text{univ}}^{\mathcal{T}_{\text{KFA}}})^{\text{IC}}$ of $\mathcal{C}_{\text{univ}}^{\mathcal{T}_{\text{KFA}}}$ and the additive closure $(\catname{2Cob}_{O-C})^{\oplus}$ of $\catname{2Cob}_{O-C}$ are isomorphic, both being, \textit{inter alia}, free symmetric monoidal categories generated by isomorphic sets of objects, morphisms, and relations (cf. also Theorem 4.2 in \cite{lauda-pfeiffer-1} and its proof). \\

The important consequence is the ring isomorphism:
\[
    \mathds{K}[X]_{\text{aug}}^{\mathcal{T}_{\text{KFA}}} = \mathcal{C}_{\text{univ}}^{\mathcal{T}_{\text{KFA}}}(\textbf{1},\textbf{1})\cong (\catname{2Cob}_{O-C})^{\oplus}(\emptyset, \emptyset).
\]

\end{subsection}

\begin{subsection}{Characterisation of $\catname{2Cob}_{O-C}(\emptyset,\emptyset)$ and other properties}

It is shown in \cite{lauda-pfeiffer-1} that any connected cobordism can be reduced to a "normal form", characterised by the in- and out-boundaries, the genus $g$, and the number $w$ of "windows". Using the normal form, and defining the \textit{handle} and \textit{window} endomorphisms as $G=\nabla_{S^1}\circ \Delta_{S^1}$ and $W=\iota^* \circ \iota$, we can deduce that the the connected cobordisms from $\emptyset$ to $\emptyset$ are all on the form:
\begin{align*}
    \Sigma_{g,w} \quad & = \quad \epsilon_{S^1} \circ W^w \circ G^g \circ u_{S^1} \\
    & = \quad  \raisebox{-0.4\height}{\includegraphics[scale=0.5]{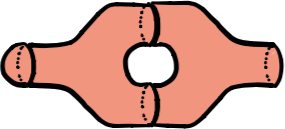}} \quad \cdots \quad \raisebox{-0.4\height}{\includegraphics[scale=0.5]{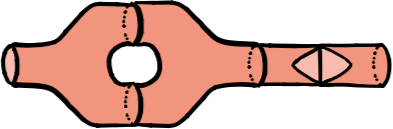}} \quad \cdots \quad \raisebox{-0.3\height}{\includegraphics[scale=0.5]{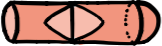}} 
\end{align*}
\vspace{-4.3em}
\begin{align*}
    \hphantom{\Sigma_{g,w} \quad} & \hphantom{= \quad  \raisebox{-0.4\height}{\includegraphics[scale=0.5]{figures/dessins/invariants/sigma-g-w-1.png}} \quad \cdots \quad \raisebox{-0.4\height}{\includegraphics[scale=0.5]{figures/dessins/invariants/sigma-g-w-2.png}} \quad \cdots \quad \raisebox{-0.3\height}{\includegraphics[scale=0.5]{figures/dessins/invariants/sigma-g-w-3.png}}} \\
    & \hphantom{xxxxxl} \underbrace{\hphantom{machinbiduleschouettemachinsl}}_{g \text{ handles}} \hphantom{x} \underbrace{\hphantom{fourbitrucmachinsl}}_{w \text{ windows}}
\end{align*}

for $g,w\in \mathds{N}$. In other words, $ \mathds{K}[X]_{\text{aug}}^{\mathcal{T}_{\text{KFA}}}\cong \mathds{K}[\{\Sigma_{g,w} : g,w\in \mathds{N} \}]\cong \mathds{K}[ \mathds{N}^2]$. \\

The cobordisms $\Sigma_{g,w}$ enjoy some characterisations in terms of traces of $W$, $G$, and $H$, which is another important endomorphism called the \textit{hole} or \textit{open window}:
\[
    H \quad := \quad \raisebox{-0.4\height}{\includegraphics[scale=0.5]{figures/dessins/morphisms/delta-I.png}}\hspace{-0.15em}\raisebox{-0.4\height}{\includegraphics[scale=0.5]{figures/dessins/morphisms/nabla-I.png}} \quad = \quad \nabla_I \circ \Delta_I.
\]

\begin{proposition}\label{properties-sigma-g-w}
For any $g,w\geqslant0$, we have the properties.
\begin{enumerate}
    \item $W\circ G = G\circ W$,
    \item $\Sigma_{g+1,w}=\text{Tr}_{S^1}(G^g \circ W^w):= \epsilon_{S^1} \circ \nabla_{S^1} \circ ((G^g \circ W^w) \otimes \mathds{1}_{S^1}) \circ \Delta_{S^1} \circ u_{S^1}$,
    \item $\Sigma_{0,w+2}=\text{Tr}_{I}(H^w):= \epsilon_I \circ \nabla_I \circ (H^w \otimes \mathds{1}_I) \circ \Delta_I \circ u_I $,
    \item $\Sigma_{0,1}=\epsilon_I \circ u_I$,
    \item $\Sigma_{0,0}=\epsilon_{S^1} \circ u_{S^1}$.
\end{enumerate}
Here, $\text{Tr}$ refers to taking the trace (i.e., closing the corresponding endomorphism of $I$ or $S^1$ by gluing its in-boundary to its out-boundary).
\end{proposition}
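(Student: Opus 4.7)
The plan is to prove the five items in order by reducing each to algebraic identities in the Frobenius algebras $V_{S^1}$ and $V_I$. The preparatory tool I would establish once and for all is the following classical package of identities in any symmetric Frobenius algebra $V$ with window element $\theta := \nabla\circ\Delta\circ u(1)$: (i) $\nabla\circ\Delta$ equals left multiplication $m_\theta$ by $\theta$; (ii) $\theta$ is central in $V$; and (iii) $\mathrm{Tr}(m_\eta) = \epsilon(\eta\cdot\theta)$ for any $\eta\in V$. All three are consequences of the Frobenius relation combined with symmetry of the pairing $\epsilon\circ\nabla$.

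For (1), both $G$ and $W$ act on the commutative algebra $V_{S^1}$. Identity (i) gives $G = m_{\theta_G}$ where $\theta_G := G(u_{S^1}(1))$. For $W = \iota^*\iota$, a direct computation using that $\iota$ is an algebra homomorphism, that $\iota^*$ is dual to $\iota$, and symmetry of $\epsilon_I\circ\nabla_I$, shows that $\iota^*$ is $V_{S^1}$-linear when $V_I$ is viewed as a $V_{S^1}$-module along $\iota$; consequently $W = m_\omega$ with $\omega := \iota^*(u_I(1))$. Two multiplication operators in the commutative algebra $V_{S^1}$ always commute, which proves the claim. For (2), combining (i) and (iii) in $V_{S^1}$ yields
\[ \mathrm{Tr}_{S^1}(G^g\circ W^w) \;=\; \mathrm{Tr}_{S^1}(m_{\theta_G^g\omega^w}) \;=\; \epsilon_{S^1}(\theta_G^g\omega^w\cdot\theta_G) \;=\; \epsilon_{S^1}(\omega^w\theta_G^{g+1}) \;=\; \Sigma_{g+1,w}. \]

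For (3), applying the same trace formula to the symmetric algebra $V_I$ gives $\mathrm{Tr}_I(H^w) = \epsilon_I(\theta_H^{w+1})$, where $\theta_H := H(u_I(1))$ is central in $V_I$ by (ii). On the other side, using $\iota\circ u_{S^1} = u_I$ (from $\iota$ being an algebra homomorphism) and $\epsilon_{S^1}\circ\iota^* = \epsilon_I$ (from the Frobenius duality of $\iota^*$ and $\iota$) we rewrite
\[ \Sigma_{0,w+2} \;=\; \epsilon_{S^1}\circ(\iota^*\iota)^{w+2}\circ u_{S^1} \;=\; \epsilon_I\bigl((\iota\iota^*)^{w+1}(u_I(1))\bigr). \]
I would finish by proving $(\iota\iota^*)^n(u_I(1)) = \theta_H^n$ by induction on $n$. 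The base case $n=1$ comes from the Cardy relation $\iota\iota^* = \nabla_I\circ\beta\circ\Delta_I$: symmetry of the pairing forces any dual basis $\{x_i\},\{y_i\}$ of $V_I$ to satisfy $\sum y_i x_i = \sum x_i y_i = \theta_H$, so $\iota\iota^*(u_I(1)) = \theta_H$. The inductive step uses centrality of $\theta_H$ to commute it through the $x_i$'s, giving $\iota\iota^*(\theta_H^n) = \sum y_i\theta_H^n x_i = \theta_H^n\sum y_i x_i = \theta_H^{n+1}$.

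Items (4) and (5) are then immediate unfoldings: $\Sigma_{0,1} = \epsilon_{S^1}\circ\iota^*\circ\iota\circ u_{S^1} = \epsilon_I\circ u_I$ by the same two identities, and $\Sigma_{0,0} = \epsilon_{S^1}\circ u_{S^1}$ since there are no handles or windows to apply. The main subtlety throughout lies in step (3), where one must resist the temptation to identify $\iota\iota^*$ with the hole $H$: for a non-commutative symmetric $V_I$ the Cardy operator genuinely differs from $H$ as an endomorphism of $V_I$, yet it agrees with it on the iterated images of the unit, which is exactly what the induction exploits.
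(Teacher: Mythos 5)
Your proposal is correct. The paper itself gives no argument beyond citing the consequences of the axioms worked out in section 3.5 of Lauda--Pfeiffer, and your derivation supplies exactly those details: identifying $G$, $W$, and $H$ with multiplication by the central elements $\theta_G=G(u_{S^1}(1))$, $\omega=\iota^*(u_I(1))$, and $\theta_H=H(u_I(1))$, and then reducing each trace to an application of $\epsilon$. The only delicate point is item (3), where $\iota\circ\iota^*$ is not equal to $H$ as an endomorphism of $I$ but agrees with it on powers of the unit via the Cardy relation and cosymmetricity; you handle this correctly, and your identities $\iota\circ u_{S^1}=u_I$ and $\epsilon_{S^1}\circ\iota^*=\epsilon_I$ are indeed consequences of the homomorphism and duality axioms.
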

All these follow straightforwardly from the axioms and their consequences (cf. section 3.5 in \cite{lauda-pfeiffer-1}). \\

Note that any character $\chi:\mathds{K}[X]_{\text{aug}}^{\mathcal{T}_{\text{KFA}}}\rightarrow \mathds{K}$ produces a category $\mathcal{C}_{\chi}$ generated by a KFA object corresponding (as stated in table \ref{table:1}) to an open-closed TQFT $Z: \catname{2Cob}_{O-C}\rightarrow \calcatname{C}_{\chi}$. This TQFT $Z$ coincides with $\chi$ on all the $\Sigma_{g,w}$. In other words: in the broader context of Meir's framework, all TQFT scalar invariants of open-closed cobordisms are admissible. We will see however that an open-closed TQFT with target a good category (such as $\catname{Vec}_{\mathds{K}}$) is restricted to specific values on the $\Sigma_{g,w}$.

\end{subsection}

\end{section}
% --- Section: OPEN-CLOSED BORDISM ---
\begin{section}{Good knowledgeable Frobenius algebras}
\label{section-good-knfrob}

In the case of the theory $\mathcal{T}_{\text{KFA}}$, the set of characters $\text{Ch}_{\mathcal{T}_{\text{KFA}}}$ forms an algebra.
We show first that it is a subspace of $\text{Ch}_{\emptyset}$, and then we introduce a product.

\begin{proposition} 
    The subset $\text{Ch}_{\mathcal{T}_{\text{KFA}}} \subseteq \text{Ch}_{\emptyset}$ is closed under addition and multiplication by a scalar, as defined in \ref{definition-character}.
    \begin{proof}
        Let $\chi_1, \chi_2 \in \text{Ch}_{\emptyset}$. As shown in \cite{meir-interpolation,meir-universal}, $\chi_1+\chi_2$ is afforded by the direct sum $(V_1\oplus V_2,(x\oplus y))$ of two objects $(V_1,(x))$ and $(V_2,(y))$ of type $t_{\text{KFA}}$ in $\calcatname{C}_{\chi_1}\boxtimes\calcatname{C}_{\chi_2}$ affording respectively $\chi_1$ and $\chi_2$. But by proposition \ref{proposition-sum-product-KFAs}, if $V_1$ and $V_2$ are models for $\mathcal{T}_{\text{KFA}}$, so is $(V_1,(x))+(V_2,(y))$. Wherefore $\chi_1+\chi_2 \in \text{Ch}_{\mathcal{T}_{\text{KFA}}}$. \\
        Concerning multiplication by a scalar, let $\chi \in \text{Ch}_{\mathcal{T}_{\text{KFA}}}$. Let $I=\mathfrak{I}_{\mathcal{T}_{\text{FKA}}}(\textbf{1},\textbf{1})$ (the ideal such that $\mathds{K}[X]_{\text{aug}} / I = \mathds{K}[X]_{\text{aug}}^{\mathcal{T}_{\text{KFA}}}$), and let $p\in I$. We have that $(\lambda\chi)(p)$ is a polynomial in $\lambda$. Now, this polynomial must vanish for every $\lambda=n\in\mathds{N}$, because $n\chi = \sum_{i=1}^{n} \chi \in \text{Ch}_{\mathcal{T}_{\text{KFA}}}$ by closure under addition. Thus the polynomial must be $0$ for any $\lambda$, and consequently $(\lambda\chi)(I)=0$, showing that $\lambda\chi\in \text{Ch}_{\mathcal{T}_{\text{KFA}}}$.
    \end{proof}
\end{proposition}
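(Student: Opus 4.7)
The plan is to handle the two closure properties separately, treating addition first and bootstrapping from it to get scalar multiplication.

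For closure under addition, given $\chi_1, \chi_2 \in \text{Ch}_{\mathcal{T}_{\text{KFA}}}$, I would exhibit a concrete KFA object affording $\chi_1 + \chi_2$. Each category $\calcatname{C}_{\chi_i}$ contains by construction a distinguished KFA object $V_i$ affording $\chi_i$; viewing both inside the Deligne tensor product $\calcatname{C}_{\chi_1}\boxtimes\calcatname{C}_{\chi_2}$ (as in \cite{meir-interpolation,meir-universal}), the direct sum $V_1 \oplus V_2$ equipped with the componentwise structure tensors is again a KFA by Proposition \ref{proposition-sum-product-KFAs}. Since any closed connected diagram is evaluated as a categorical trace, and such traces split additively across direct summands, $V_1 \oplus V_2$ affords exactly $\chi_1 + \chi_2$ on the closed connected generators of $\mathds{K}[X]_{\text{aug}}$. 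Because both $\chi_{V_1 \oplus V_2}$ and $\chi_1 + \chi_2$ are ring homomorphisms and they agree on a generating set, they agree everywhere, so $\chi_1 + \chi_2 \in \text{Ch}_{\mathcal{T}_{\text{KFA}}}$.

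For closure under scalar multiplication, I would fix $\chi \in \text{Ch}_{\mathcal{T}_{\text{KFA}}}$ together with an arbitrary element $p$ of the ideal $I := \mathfrak{I}_{\mathcal{T}_{\text{KFA}}}(\textbf{1},\textbf{1})$ that cuts out $\mathds{K}[X]_{\text{aug}}^{\mathcal{T}_{\text{KFA}}}$, and verify $(\lambda\chi)(p) = 0$ for every $\lambda \in \mathds{K}$ by a polynomial-interpolation argument. By Definition \ref{definition-character}, $\lambda\chi$ multiplies each closed connected generator by $\lambda$, so any monomial of total degree $d$ in those generators satisfies $(\lambda\chi)(\text{monomial}) = \lambda^d \chi(\text{monomial})$; decomposing $p$ into homogeneous pieces then exhibits $\lambda \mapsto (\lambda\chi)(p)$ as a polynomial in $\lambda$. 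By the first part applied iteratively, $n\chi = \chi + \cdots + \chi$ lies in $\text{Ch}_{\mathcal{T}_{\text{KFA}}}$ for every positive integer $n$, so this polynomial vanishes at all $n \in \mathds{N}_{>0}$ and must be identically zero.

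The only substantive ingredient is Proposition \ref{proposition-sum-product-KFAs}; the rest is the formal Meir machinery describing how direct sums realise the sum of characters, plus elementary polynomial interpolation. I do not expect any genuine obstacle beyond the one already flagged in that proposition, namely that most axioms of $\mathcal{T}_{\text{KFA}}$ equate two connected diagrams and therefore pass through direct sums automatically, while the non-connected axiom $\mathds{1} = \mathds{1}_I + \mathds{1}_{S^1}$ has to be checked by hand; but since this verification is already in place, the present proposition is essentially formal.
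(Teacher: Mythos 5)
Your proposal is correct and follows essentially the same route as the paper: addition is handled by realising $\chi_1+\chi_2$ on the direct sum KFA in $\calcatname{C}_{\chi_1}\boxtimes\calcatname{C}_{\chi_2}$ via Proposition \ref{proposition-sum-product-KFAs}, and scalar multiplication by the same polynomial-interpolation argument on elements of $\mathfrak{I}_{\mathcal{T}_{\text{KFA}}}(\textbf{1},\textbf{1})$ using $n\chi=\chi+\cdots+\chi$. The extra details you supply (homogeneous decomposition giving the polynomial in $\lambda$, agreement of ring homomorphisms on generators) are correct elaborations of steps the paper leaves implicit.
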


As $\text{Ch}_{\emptyset}^{\mathcal{G}}$ and $\text{Ch}_{\mathcal{T}_{\text{KFA}}}$ are both subspaces of $\text{Ch}_{\emptyset}$, so is their intersection $\text{Ch}_{\mathcal{T}_{\text{KFA}}}^{\mathcal{G}}$. We can define a natural product on $\text{Ch}_{\mathcal{T}_{\text{KFA}}}$ turning it into an algebra. We recapitulate the algebra structure on $\text{Ch}_{\mathcal{T}_{\text{KFA}}}$ in the following definition.

\begin{definition} \label{definition-character-kfa} For two characters $\chi_1,\chi_2\in \text{Ch}_{\mathcal{T}_{\text{KFA}}}$, and any scalar $\lambda\in\mathds{K}$, we define the characters $\chi_1+\chi_2$ (addition), $\chi_1 \cdot \chi_2$ (product), and $\lambda\chi_1$ (multiplication by a scalar) by their actions on any closed connected diagram $\Sigma_{g,w}$ generating $\mathds{K}[X]_{\text{aug}}^{\mathcal{T}_{\text{KFA}}}$:
    \[  
    \left\{
    \begin{array}{lll}
        (\chi_1\cdot \chi_2)(\Sigma_{g,w}) & = & \chi_1(\Sigma_{g,w})\chi_2(\Sigma_{g,w}), \\
        (\chi_1+\chi_2)(\Sigma_{g,w}) & = & \chi_1(\Sigma_{g,w})+\chi_2(\Sigma_{g,w}), \\
        (\lambda\chi_1)(\Sigma_{g,w}) & = & \lambda(\chi_1(\Sigma_{g,w})). 
    \end{array} 
    \right.
    \]
\end{definition}

Note that with this structure, $\text{Ch}_{\mathcal{T}_{\text{KFA}}}$ is a subspace but not a subalgebra of $\text{Ch}_{\emptyset}$ (to distinguish the two products, we write the new one with an explicit "$\cdot$"). Furthermore, the subspace of good characters $\text{Ch}_{\mathcal{T}_{\text{KFA}}}^{\mathcal{G}}$ forms a subalgebra of $\text{Ch}_{\mathcal{T}_{\text{KFA}}}$:

\begin{proposition}
    Let $\chi_1,\chi_2\in \text{Ch}_{\mathcal{T}_{\text{KFA}}}^{\mathcal{G}}$. Then $\chi_1 \cdot \chi_2$ is good as well.
    \begin{proof}
        Let $V$ and $W$ be the KFAs models generating respectively $\calcatname{C}_{\chi_1}$ and $\calcatname{C}_{\chi_2}$. Both models can be seen as living in the same good category $\calcatname{C}_{\chi_1}\boxtimes\calcatname{C}_{\chi_2}$. Their product (cf. proposition \ref{proposition-sum-product-KFAs}), which is again a model for $\mathcal{T}_{\text{KFA}}$, affords $\chi_1 \cdot \chi_2$, as can be seen by direct evaluation on any closed connected diagram $\Sigma_{g,w}$. So $\chi_1 \cdot \chi_2$ is in $\text{Ch}_{\mathcal{T}_{\text{KFA}}}^{\mathcal{G}}$.
    \end{proof}
\end{proposition}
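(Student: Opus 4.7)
The plan is to exhibit, for the product character $\chi_1 \cdot \chi_2$, an explicit KFA model in a good category that affords it; by proposition \ref{theorem-udi-good-character} this will establish $\chi_1 \cdot \chi_2 \in \text{Ch}_{\mathcal{T}_{\text{KFA}}}^{\mathcal{G}}$.

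First, since each $\chi_i$ is good, I would take the canonical KFA model $V^{(i)}$ generating $\calcatname{C}_{\chi_i}$, where the latter is itself a good category. The Deligne tensor product $\calcatname{D} := \calcatname{C}_{\chi_1} \boxtimes \calcatname{C}_{\chi_2}$ is again a good category (semisimple abelian tensor structure, rigidity, and finite-dimensional hom-spaces are all preserved), and both $V^{(1)}$ and $V^{(2)}$ embed into it along the two monoidal functors $- \boxtimes \textbf{1}$ and $\textbf{1} \boxtimes -$, so that they may be regarded as KFA models living in the same ambient $\calcatname{D}$. Applying proposition \ref{proposition-sum-product-KFAs} to this pair, their product (tensoring open parts, closed parts, zippers and cozippers componentwise) is again a KFA model $V$ in $\calcatname{D}$. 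Being a model in a good category, the character $\chi_V$ it affords is good.

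Finally, I would identify $\chi_V$ with $\chi_1 \cdot \chi_2$. Since both characters are determined by their values on the generators $\Sigma_{g,w}$ of $\mathds{K}[X]_{\text{aug}}^{\mathcal{T}_{\text{KFA}}}$, it suffices to check that for every $g,w \geqslant 0$, the realisation of $\Sigma_{g,w}$ on $V$ equals the product $\chi_1(\Sigma_{g,w}) \chi_2(\Sigma_{g,w})$. The structure tensors of $V$ are of the form $x^{(1)} \boxtimes x^{(2)}$, so building the realisation of $\Sigma_{g,w}$ box by box shows that the whole composite is the Deligne tensor product of the two realisations in $\calcatname{C}_{\chi_1}$ and $\calcatname{C}_{\chi_2}$. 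Because $\Sigma_{g,w}$ is a \emph{closed connected} diagram, this composite lives in $\mathrm{End}_{\calcatname{D}}(\textbf{1}) \cong \mathrm{End}_{\calcatname{C}_{\chi_1}}(\textbf{1}) \otimes_{\mathds{K}} \mathrm{End}_{\calcatname{C}_{\chi_2}}(\textbf{1}) \cong \mathds{K}$, and the identification sends $\chi_1(\Sigma_{g,w}) \boxtimes \chi_2(\Sigma_{g,w})$ precisely to the scalar product $\chi_1(\Sigma_{g,w}) \chi_2(\Sigma_{g,w})$.

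The main technical point is this last factorisation step. Connectedness of $\Sigma_{g,w}$ is essential: if one allowed disconnected closed diagrams, each connected component would still give a product of the two evaluations, but the global product of a disconnected diagram would be the product over components, whereas $\chi_1 \cdot \chi_2$ is defined only as the componentwise product on generators (and then extended multiplicatively to all of $\mathds{K}[X]_{\text{aug}}^{\mathcal{T}_{\text{KFA}}}$); fortunately the $\Sigma_{g,w}$ are by construction connected, so this subtlety does not bite. The cleanest way to make the argument rigorous is to observe that the evaluation of any connected closed diagram in a tensor-product model factors through the monoidal structure of $\boxtimes$ on $\mathrm{End}(\textbf{1})$, which can be proved by a straightforward induction on the number of structure boxes appearing in the normal form of $\Sigma_{g,w}$ supplied by proposition \ref{properties-sigma-g-w}.
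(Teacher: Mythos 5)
Your proposal is correct and follows essentially the same route as the paper: embed the two generating KFAs into the good category $\calcatname{C}_{\chi_1}\boxtimes\calcatname{C}_{\chi_2}$, form their product via proposition \ref{proposition-sum-product-KFAs}, and check by evaluation on the connected generators $\Sigma_{g,w}$ that the resulting model affords $\chi_1\cdot\chi_2$. The only difference is that you spell out the factorisation of the evaluation through $\mathrm{End}_{\calcatname{D}}(\textbf{1})\cong\mathds{K}$, which the paper leaves as ``direct evaluation.''
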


A character $\chi:\mathds{K}[X]_{\text{aug}}^{\mathcal{T}_{\text{KFA}}}\rightarrow\mathds{K}$, defines a sequence $\chi=(\chi_{g,w})_{g,w\in\mathds{N}}$ by setting $\chi_{g,w}:=\chi(\Sigma_{g,w})$, and vice versa. Similarly, there is a linear isomorphism between the space of characters $\text{Ch}_{\mathcal{T}_{\text{KFA}}}$ and $\mathds{K}[[X,Y]]$, the space of formal power series in two variables, given by: 

\[ \chi \mapsto f_{\chi}(X,Y) := \sum_{g,w\geqslant 0} \chi(\Sigma_{g,w})X^g Y^w = \sum_{g,w\geqslant 0} \chi_{g,w}X^g Y^w. \]

We call $f_{\chi}(X,Y)$ the \textit{generating function} of $\chi$.  Under some conditions, different generating functions will give characters producing equivalent categories:

\begin{proposition} \label{proposition-equivalent-generating-functions} (see also section 2.1 in \cite{kok}.)
    Let $\alpha\in\mathds{K}^*$ and $\chi, \chi'$ be two characters such that $f_{\chi'}(X,Y)=\frac{1}{\alpha^2}f_{\chi}(\alpha^2 X, \alpha Y)$. Then $\calcatname{C}_{\chi}\simeq\calcatname{C}_{\chi'}$ as tensor categories.
\end{proposition}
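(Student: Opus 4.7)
The plan is to realise the transformation of generating functions as an explicit rescaling of the structure morphisms of a KFA, and then to deduce the equivalence from the universal property of Proposition \ref{proposition-universal-property}.

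First, starting from any KFA $V = V_I \oplus V_{S^1}$ in a tensor category $\calcatname{D}$, I would define a new KFA structure $V^{(\alpha)}$ on the same underlying object by leaving $\nabla_{S^1}, u_{S^1}, \nabla_I, u_I$, and $\iota$ unchanged and setting
\[
    \Delta_{S^1}^{(\alpha)} = \alpha^2 \Delta_{S^1}, \quad \epsilon_{S^1}^{(\alpha)} = \alpha^{-2}\epsilon_{S^1}, \quad \Delta_I^{(\alpha)} = \alpha\,\Delta_I, \quad \epsilon_I^{(\alpha)} = \alpha^{-1}\epsilon_I, \quad (\iota^{*})^{(\alpha)} = \alpha\,\iota^{*}.
\]
This is one choice within a one-parameter family of rescalings that all produce the same effect on the character.

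Second, I would verify that $V^{(\alpha)}$ still satisfies every axiom of $\mathcal{T}_{\text{KFA}}$. The axioms of (co)unitality, (co)associativity, the Frobenius relation in both sectors, (co)commutativity of the closed sector, and symmetricity of the open sector all have both sides scaling by the same aggregate factor; the algebra-homomorphism axiom for $\iota$ and the "knowledge" axiom only involve morphisms that are either unchanged or scaled compatibly. The two non-trivial checks are the duality axiom $\epsilon_I \circ \nabla_I \circ (\iota\otimes\mathds{1}_I) = \epsilon_{S^1}\circ\nabla_{S^1}\circ(\mathds{1}_{S^1}\otimes\iota^{*})$, which scales by $\alpha^{-1}$ on each side, and the Cardy relation $\iota\circ\iota^{*} = \nabla_I\circ\beta\circ\Delta_I$, which scales by $\alpha$ on each side. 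Both identities persist, so $V^{(\alpha)}$ is a KFA.

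Third, I would compute the effect on the character. Using the formula $\Sigma_{g,w} = \epsilon_{S^1}\circ W^w\circ G^g\circ u_{S^1}$ from Proposition \ref{properties-sigma-g-w}, with $W = \iota^{*}\circ\iota$ scaled by $\alpha$ and $G = \nabla_{S^1}\circ\Delta_{S^1}$ scaled by $\alpha^2$, the scalings combine to give
\[ \chi_{V^{(\alpha)}}(\Sigma_{g,w}) = \alpha^{-2}\cdot\alpha^{w}\cdot\alpha^{2g}\cdot \chi_V(\Sigma_{g,w}) = \alpha^{2g+w-2}\,\chi_V(\Sigma_{g,w}), \]
which is exactly $f_{\chi_{V^{(\alpha)}}}(X,Y) = \frac{1}{\alpha^2}f_{\chi_V}(\alpha^2 X, \alpha Y)$.

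Finally, applying this construction to the generating KFA of $\calcatname{C}_\chi$ yields a KFA object in $\calcatname{C}_\chi$ affording $\chi'$. By Proposition \ref{proposition-universal-property} it induces a symmetric monoidal $\mathds{K}$-linear functor $\calcatname{C}_{\text{univ}}^{\mathcal{T}_{\text{KFA}}}\to\calcatname{C}_\chi$; since $\calcatname{C}_\chi$ has no non-zero negligible morphisms, this functor annihilates the $\chi'$-negligible ideal and descends, after Karoubi completion, to $F_\alpha:\calcatname{C}_{\chi'}\to\calcatname{C}_\chi$. Replacing $\alpha$ with $\alpha^{-1}$ produces an inverse functor $F_{\alpha^{-1}}$; the two compositions correspond to the trivial rescaling and are thus naturally isomorphic to the identity functors, giving the desired equivalence of tensor categories.

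The main obstacle is the second step, the axiom verification: the duality and Cardy relations are exactly what force the specific exponents $\alpha^2$, $\alpha$, and $\alpha^{-2}$ appearing in the statement of the proposition, and once these are checked the rest of the argument is formal.
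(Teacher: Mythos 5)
Your argument is correct, and it is in substance the same rescaling idea as the paper's, but packaged differently. The paper defines a single scaling automorphism $\phi$ of the hom-spaces of $(\calcatname{C}_{\text{univ}}^{\mathcal{T}_{\text{KFA}}})^{\text{IC}}$ by weighting each connected cobordism $f$ by $\alpha^{-e(f)+l/2}$, where $e(f)$ is the Euler characteristic; because this exponent is additive under gluing, compatibility with composition (your ``axiom verification'' step) is automatic, and the equivalence follows from the fact that $\phi$ intertwines $pair_{\chi}$ and $pair_{\chi'}$ and hence identifies the two radicals. You instead rescale the generating structure tensors of the KFA, check each axiom of $\mathcal{T}_{\text{KFA}}$ by hand, and then route through Proposition \ref{proposition-universal-property}; your $V^{(\alpha)}$ is exactly the model classifying the paper's $\phi$ (up to the gauge freedom you correctly point out: your per-generator exponents differ from the Euler-characteristic prescription, e.g.\ the paper's recipe scales $\epsilon_{S^1}$ by $\alpha^{-1}$ rather than $\alpha^{-2}$, but any consistent choice inducing $\Sigma_{g,w}\mapsto\alpha^{2g+w-2}\Sigma_{g,w}$ works). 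What your version buys is explicitness and no reliance on topological input; what the paper's version buys is that the duality and Cardy checks come for free.

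One step deserves a word more of care: ``since $\calcatname{C}_{\chi}$ has no non-zero negligible morphisms, this functor annihilates the $\chi'$-negligible ideal'' is not a formal consequence of the target having no negligibles --- you also need $\overline{F}$ to be full on the relevant hom-spaces, so that $\overline{F}(f)$ pairing to zero against the image of $\overline{F}$ forces it to be $\chi$-negligible (this is precisely the hypothesis in the factorisation lemma used in section \ref{section-rep-categories}). Here fullness is immediate, since $V^{(\alpha)}$ has the same underlying object as the generator of $\calcatname{C}_{\chi}$ and structure tensors that are non-zero scalar multiples of the original ones, so the image of $\overline{F}$ coincides with that of the canonical projection; but the justification should be stated.
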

\begin{proof}
    Let $f$ be a connected cobordism with a total number $l\in\mathds{N}$ of \textit{in} and \textit{out} open boundaries. Define the scaling map $\phi$ which sends $f$ to $\alpha^{-e(f)+l/2} f$, where $e(f)$ is the Euler characteristic of $f$. This mapping extends to automorphisms of hom-spaces of $(\calcatname{C}^{\mathcal{T}_{\text{KFA}}}_{\text{univ}})^{\text{IC}}$ and is compatible with composition. Any character $\chi$ gives a pairing $pair_\chi$ on those hom-spaces (cf. section \ref{section-meir-tool}). Using, $\phi$, one can define another pairing given by precomposing $pair_{\chi}$ by $\phi\otimes \phi$. This new pairing corresponds to the pairing one would obtain from a character $\chi'$ defined by
    \[ \chi'(\Sigma_{g,w})=\chi(\Sigma_{g,w}) \alpha^{-e(\Sigma_{g,w})} \]
    where we recall that $e(\Sigma_{g,w})=2-2g-w$. As the isomorphism $\phi$ intertwines the two pairings $pair_{\chi}$ and $pair_{\chi'}$, the resulting quotient categories $\calcatname{C}_{\chi}$ and $\calcatname{C}_{\chi'}$ are equivalent as tensor categories. 
\end{proof}

Finding the subspace of good characters $\text{Ch}_{\mathcal{T}_{\text{KFA}}}^{\mathcal{G}}$ translates into finding the corresponding isomorphic subspace $\mathcal{G}\subseteq \mathds{K}[[X,Y]]$. To do so, we use the properties of proposition \ref{properties-sigma-g-w} and re-write $f_{\chi}$:
\begin{multline*}
f_{\chi}(X,Y)  = \chi(\Sigma_{0,0}) + \chi(\Sigma_{0,1})Y + Y^2\sum_{w\geqslant 0} \chi(\Sigma_{0,w+2})Y^w + X \sum_{g,w\geqslant 0} \chi(\Sigma_{g+1,w})X^g Y^w \\
 = \chi(\Sigma_{0,0}) + \chi(\Sigma_{0,1})Y + Y^2\sum_{w\geqslant 0} \text{Tr}_{I}(H^w)Y^w + X \sum_{g,w\geqslant 0} \text{Tr}_{S^1}(G^g \circ W^w)X^g Y^w,
\end{multline*}
where the traces are to be understood as traces in the tensor category $\calcatname{C}_{\chi}$. If this category is good, i.e., if $\chi$ is a good character, the propositions \ref{trace-of-endomorphism} and \ref{trace-of-endomorphisms} apply, and it follows that $\mathcal{G}$ necessarily sits inside the vector space spanned by the set

\[
    \left\{1, Y \right\} \cup \left\{ \frac{Y^2}{1-\nu Y} : \nu \in \mathds{K} \right\} \cup \left\{ \frac{X}{(1-\lambda X)(1-\mu Y)} : \lambda, \mu \in \mathds{K} \right\}, 
\]

\vspace{1em}
which is the same as the vector space spanned by 
\begin{multline*}
    \left\{1, X, Y, Y^2 \right\} \cup \left\{ \frac{1}{1-\lambda X}\frac{1}{1-\mu Y} : \lambda \in \mathds{K}^*, \mu \in \mathds{K} \right\} \\ 
    \cup \left\{ \frac{1}{1-\nu Y} : \nu \in \mathds{K}^* \right\} \cup \left\{ \frac{X}{1-\nu Y} : \nu \in \mathds{K}^* \right\}.
\end{multline*}

\begin{subsection}{Finding the generating functions for good characters}

\begin{proposition} \label{proposition-list-of-bad-characters}
Let $\nu\in\mathds{K}^*$. If $f_{\chi}(X,Y)=\frac{1}{1-\nu Y}$ or $f_{\chi}(X,Y)=\frac{X}{1-\nu Y}$, then $\chi$ is not a good character. 
\end{proposition}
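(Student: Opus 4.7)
The plan is to assume, for contradiction, that $\chi$ is good, so that $\calcatname{C}_{\chi}$ is a semisimple good category containing the tautological KFA $V = V_I \oplus V_{S^1}$ affording $\chi$. I would exploit the identities from Proposition \ref{properties-sigma-g-w}: in particular $\chi_{0,0} = \epsilon_{S^1}\circ u_{S^1}$, $\chi_{0,1} = \epsilon_I\circ u_I$, $\chi_{1,0} = \dim V_{S^1}$, $\chi_{g+1, w} = \text{Tr}_{S^1}(G^g\circ W^w)$ and $\chi_{0, w+2} = \text{Tr}_I(H^w)$. The two cases are handled separately.

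For Case 2 ($f_{\chi} = X/(1-\nu Y)$), the key numerics are $\dim V_{S^1} = \chi_{1,0} = 1$ and $\epsilon_{S^1}\circ u_{S^1} = \chi_{0,0} = 0$, together with $\text{Tr}(W) = \chi_{1,1} = \nu \neq 0$. Since $V_{S^1}$ is nonzero, the unit axiom $\nabla_{S^1}\circ(u_{S^1}\otimes\mathds{1}) = \mathds{1}_{V_{S^1}}\neq 0$ forces $u_{S^1}\neq 0$; as $\textbf{1}$ is simple in $\calcatname{C}_{\chi}$, this embeds $\textbf{1}$ as a direct summand of $V_{S^1}$. Combined with $\epsilon_{S^1}\circ u_{S^1} = 0$ and the non-degeneracy of the Frobenius pairing $\epsilon_{S^1}\circ\nabla_{S^1}$, $V_{S^1}$ must then contain a ``hyperbolic'' pair of $\textbf{1}$-summands $\textbf{1}_u \oplus \textbf{1}_{\epsilon}$ together with an extra summand $V''$ of categorical dimension $-1$ supporting a non-degenerate symmetric pairing. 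Using the additional constraints $\chi_{g,0} = 0$ for $g\geqslant 2$ and $\chi_{1,w} = \nu^w$, one reads off the action of $G = m_c$ and of $W = \iota^{*}\circ\iota$ in this decomposition and extracts a numerical contradiction from trace identities incompatible with $\nu \neq 0$.

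For Case 1 ($f_{\chi} = 1/(1-\nu Y)$), the numerics are $\epsilon_{S^1}\circ u_{S^1} = \chi_{0,0} = 1$ and $\chi_{g,0} = 0$ for $g\geqslant 1$; in particular $\dim V_{S^1} = 0$. Because $\epsilon_{S^1}\circ u_{S^1} = 1$, the morphism $e = u_{S^1}\circ\epsilon_{S^1}$ is a genuine idempotent with $\text{Tr}(e) = 1$, and semisimplicity splits $V_{S^1} = L\oplus V'$ with $L\cong\textbf{1}$ and $\dim V' = -1$. The Casimir $c = \nabla_{S^1}\circ\Delta_{S^1}\circ u_{S^1}$ then satisfies $G = m_c$ and $\chi_{g,0} = \epsilon_{S^1}(c^g)$, so $\epsilon_{S^1}(c^g) = 0$ for all $g\geqslant 1$; in particular $c$ lies in $V' = \ker(\epsilon_{S^1})$. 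I would then analyse $m_c$ as an endomorphism of $V_{S^1}$, observe that its whole trace power series vanishes identically, and combine this with the non-degeneracy of the Frobenius pairing restricted to $V'$ together with the commutative Frobenius axioms to reach an incompatible set of constraints.

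The main obstacle in both cases is that the dimension-counting argument which settles these immediately in $\catname{Vec}_{\mathds{K}}$ -- where $\dim V_{S^1} = 0$ would force $V_{S^1} = 0$, and $\dim V_{S^1} = 1$ would force $V_{S^1}\cong\mathds{K}$ -- is unavailable in a general good category, in which objects of categorical dimension zero or negative can exist nontrivially (as in Deligne-type categories). The contradiction must come instead from the delicate interaction of the Frobenius and KFA axioms with the specific spectral data encoded in the values $\chi_{g,w}$.
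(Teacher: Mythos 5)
Your proposal correctly extracts the relevant numerical constraints ($\chi_{0,0}$, $\chi_{1,0}=\dim V_{S^1}$, $\mathrm{Tr}(W)=\nu$, and so on), but it never actually produces a contradiction: both cases end with a promise that ``an incompatible set of constraints'' or ``a numerical contradiction'' will emerge from the Frobenius axioms, without exhibiting one. This is not merely a presentational gap, because the route you choose cannot close as described. The closed-sector data you propose to analyse is, in both cases, realizable by an honest good commutative Frobenius algebra: for $f_{\chi}=1/(1-\nu Y)$ the genus sequence $\Sigma_{g,0}=\delta_{g,0}$ is afforded by the $\mathfrak{osp}(1|2)$ example of \cite{kok} (generating function $\alpha_1=1$ in table \ref{table:recap}), and for $f_{\chi}=X/(1-\nu Y)$ the genus sequence $\Sigma_{g,0}=\delta_{g,1}$ is afforded in $\catname{Rep}(O_{-1})$ (generating function $\alpha_X X$). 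So no amount of analysis of $V_{S^1}$, $G$, $\epsilon_{S^1}$ and the Casimir alone can yield a contradiction; the obstruction lives entirely in the interaction between the two sectors through the zipper, cozipper, Cardy and knowledge relations, which your sketch only gestures at (``one reads off the action of $W=\iota^*\circ\iota$ \ldots{} and extracts a numerical contradiction'').

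The paper's proof bypasses the semisimple decomposition entirely and instead invokes the third criterion of Definition \ref{theorem-udi-good-character}: in a good category, nilpotent endomorphisms have vanishing trace. One first checks that $G^2$ and $W-\nu\mathds{1}_{S^1}$ are $\chi$-negligible in both cases --- a short computation pairing them against the spanning set $\{\sigma_{g,w}\}\cup\{\sigma_{x,y}\circ u_{S^1}\circ\epsilon_{S^1}\circ\sigma_{z,t}\}$ of $\mathrm{End}(S^1)$ --- so that in $\calcatname{C}_{\chi}$ one has $G^2=0$ and $W=\nu\mathds{1}_{S^1}$. One then writes down an explicit endomorphism $f$ of $I^{\otimes 3}$, built from the open-sector structure maps and the (co)zippers, for which $f^4$ factors through $G^2$ (hence $f^4=0$) while $\mathrm{Tr}(f)=\nu^4\neq 0$. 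If you want to salvage your approach, the minimal fix is to abandon the decomposition of $V_{S^1}$ into summands and instead hunt for such a nilpotent endomorphism with nonzero trace, taking the relations $G^2=0$ and $W=\nu\mathds{1}_{S^1}$ in $\calcatname{C}_{\chi}$ as your only input.
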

\begin{proof}
    Note first that in both cases, the morphisms $G^2$ and $W-\nu \mathds{1}_{S^1}$ are $\chi$-negligible. Because of that, the morphism  $f\in\calcatname{C}_{\chi}(I^{\otimes3},I^{\otimes3})$ defined by 
    \[ f \quad = \quad \raisebox{-0.5\height}{\includegraphics[scale=0.5]{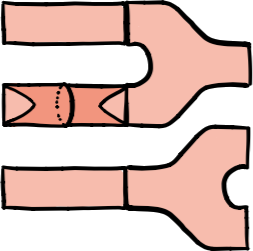}} \] 
    satisfies $f^4=0$ and $\text{Tr}f = \nu^4 \neq 0$, contradicting the last of the equivalent conditions of definition \ref{theorem-udi-good-character} for $\chi$ to be a good character. Wherefore the claim of the proposition holds.
\end{proof}

\begin{proposition} \label{proposition-list-of-good-characters}
    If $f_{\chi} \in \text{span}_{\mathds{K}}\left\{\left\{1, X, Y, Y^2 \right\} \cup \left\{ \frac{1}{1-\lambda X}\frac{1}{1-\mu Y} : \lambda \in \mathds{K}^*, \mu \in \mathds{K} \right\} \right\}$,
    then $\chi$ is good.
\end{proposition}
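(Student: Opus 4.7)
My plan is to exploit the fact, already established in this section, that $\text{Ch}_{\mathcal{T}_{\text{KFA}}}^{\mathcal{G}}$ is a linear subspace of $\text{Ch}_{\mathcal{T}_{\text{KFA}}}$, combined with the characterisation in definition \ref{theorem-udi-good-character}: any character afforded by a KFA living in a good category (for instance $\catname{Vec}_{\mathds{K}}$, or the slight enlargement in the appendix) is automatically good. It therefore suffices to exhibit a collection of KFAs in good categories whose generating functions jointly span the candidate subspace $\mathcal{G}$, and this is what I will do using the explicit KFA families from section \ref{section-open-closed}.

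For the generators of the form $\frac{1}{(1-\lambda X)(1-\mu Y)}$ with $\lambda\in\mathds{K}^{*}$ and $\mu\in\mathds{K}$, I will use the semisimple KFA $(F_{\alpha}(n),F_{\alpha^{2}},\iota,\iota^{*})$ of example \ref{example-semisimple}. The closed sector is one-dimensional, and a direct computation from the explicit formulas shows that $G=\nabla_{S^{1}}\circ\Delta_{S^{1}}$ and $W=\iota^{*}\circ\iota$ act on the generator $1$ as multiplication by $\alpha^{-2}$ and by $n\alpha^{-1}$ respectively. Combined with $\epsilon_{S^{1}}(u_{S^{1}}(1))=\alpha^{2}$, this gives
\[ f_{\chi}(X,Y) \;=\; \sum_{g,w\geqslant 0}\alpha^{2-2g-w}\,n^{w}\,X^{g}Y^{w} \;=\; \frac{\alpha^{2}}{(1-\alpha^{-2}X)(1-n\alpha^{-1}Y)}. \]
Since $\mathds{K}$ is algebraically closed, $\lambda:=\alpha^{-2}$ sweeps out all of $\mathds{K}^{*}$, and invoking the generalisation of the semisimple family to $n\in\mathds{K}$ (appendix \ref{section-annex-gl}), which still takes values in a good category, lets $\mu:=n\alpha^{-1}$ sweep out all of $\mathds{K}$. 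Multiplying the resulting good character by the scalar $\lambda$ then delivers $\frac{1}{(1-\lambda X)(1-\mu Y)}$ as a good character for every $(\lambda,\mu)\in\mathds{K}^{*}\times\mathds{K}$.

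For the remaining generators $1,X,Y,Y^{2}$, I will turn to the non-semisimple KFAs of example \ref{example-non-semisimple}, with $V_{I}=A_{\alpha,\delta}(p)$, $V_{S^{1}}=A_{\beta,\sigma}(q)$ and $\beta=\alpha^{2}/(p+2)$, again inside $\catname{Vec}_{\mathds{K}}$. Using the explicit formulas for $\Delta_{S^{1}}$, $\iota$ and $\iota^{*}$, a routine calculation yields $Gu_{S^{1}}(1)=(q+2)\beta^{-1}b$, $Wu_{S^{1}}(1)=\beta^{-1}(\delta b+\alpha b_{q})$ and $W^{2}u_{S^{1}}(1)=\alpha^{2}\beta^{-2}b$, while $G^{2}u_{S^{1}}(1)=W^{3}u_{S^{1}}(1)=G(Wu_{S^{1}}(1))=0$. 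Applying $\epsilon_{S^{1}}$ and using $\alpha^{2}/\beta=p+2$ produces
\[ f_{\chi}(X,Y) \;=\; \sigma \;+\; (q+2)\,X \;+\; \delta\,Y \;+\; (p+2)\,Y^{2}. \]
Independently varying $\sigma,\delta\in\mathds{K}$ and $p\in\mathds{N}$, $q\in\mathds{N}^{*}$, and forming differences of two such good characters that differ in exactly one parameter at a time, isolates each of $1,X,Y,Y^{2}$ as a nonzero scalar multiple of a good character; closure of $\text{Ch}_{\mathcal{T}_{\text{KFA}}}^{\mathcal{G}}$ under multiplication by $\mathds{K}$ then yields each monomial itself. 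Adjoining these to the previous family exhibits a spanning set of $\mathcal{G}$ consisting of good characters, which proves the proposition.

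I expect the most delicate step to be the non-semisimple computation: one needs to verify, using the Frobenius structure of $A_{\beta,\sigma}(q)$ and the precise form of $\iota,\iota^{*}$ prescribed in example \ref{example-non-semisimple}, that $G$ and $W$ annihilate $b$ and that $GWu_{S^{1}}(1)=0$, so that no mixed monomials $X^{g}Y^{w}$ with $g,w\geqslant 1$ appear in $f_{\chi}$. The appeal to the appendix generalisation is equally essential: with $n\in\mathds{N}$ only, the semisimple family reaches merely a countable subset of the $(\lambda,\mu)$-parameter space inside $\catname{Vec}_{\mathds{K}}$.
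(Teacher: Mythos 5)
Your proof is correct and takes essentially the same route as the paper: both exhibit the semisimple KFA $(F_{\alpha}(n),F_{\alpha^2},\iota,\iota^*)$ (generalised to $n=d\in\mathds{K}$ in $\catname{Rep}(GL_d)$ via the appendix) for the generators $\frac{1}{(1-\lambda X)(1-\mu Y)}$, and the non-semisimple KFA $(A_{\alpha,\delta}(p),A_{\beta,\sigma}(q),\iota,\iota^*)$ in $\catname{Vec}_{\mathds{K}}$ for $1,X,Y,Y^2$, then conclude by linearity of the space of good characters. Your explicit computations agree with the paper's stated formulas, e.g.\ $f_{\chi}=\sigma+(q+2)X+\delta Y+(p+2)Y^2$, and you merely make more explicit the linear-combination step that the paper leaves implicit.
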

\begin{proof}
    It suffices to find models of KFAs in good categories affording characters $\chi$ whose associated power series $f_{\chi}$ span this vector space. We use the good examples given in section \ref{subsection-KFA} and gather the results in the following list. 
    \begin{itemize}
        \item \textbf{Case}: $f \in\text{span}_{\mathds{K}}\left\{\frac{1}{1-\lambda X}\frac{1}{1-\mu Y} : \lambda \in \mathds{K}^*, \mu \in \mathds{K}\right\}$. \\
            Let $\alpha\in\mathds{K}^*$. We consider the generalisation of $(F_{\alpha}(n),F_{\alpha^2},\iota, \iota^*)$ (example \ref{example-semisimple}), for $n=d\in\mathds{K}$, described in appendix \ref{section-annex-gl}. It lives in the good category $\catname{Rep}(GL_d)$. Direct calculation gives $f_{\chi}(X,Y)=\frac{1}{\lambda}\frac{1}{1-\lambda X}\frac{1}{1-\mu Y}$, where $\lambda=\alpha^{-2}$ and $\mu = d\alpha^{-1}$.
        \item \textbf{Case}: $f \in \text{span}_{\mathds{K}}\left\{1,X,Y,Y^2\right\}$. \\
            Let $p\in\mathds{N}, q\in\mathds{N}^*, \alpha\in\mathds{K}^*, \delta,\sigma\in\mathds{K}$, and $\beta=\frac{\alpha^2}{p+2}$.
            We consider the KFA $(A_{\alpha,\delta}(p), A_{\beta,\sigma}(q), \iota, \iota^*)$ of example \ref{example-non-semisimple} in the good category $\catname{Vec}_{\mathds{K}}$, This gives $f_{\chi}(X,Y)=\sigma + (q+2) X + \delta Y + (p+2) Y^2$.
    \end{itemize}
\end{proof}

Put together, the propositions \ref{proposition-list-of-bad-characters} and \ref{proposition-list-of-good-characters} give a proof of the claim in theorem \ref{theorem-main}, i.e., we have 
\[
    \mathcal{G}=\text{span}_{\mathds{K}}\left\{\left\{1, X, Y, Y^2 \right\} \cup \left\{ \frac{1}{1-\lambda X}\frac{1}{1-\mu Y} : \lambda \in \mathds{K}^*, \mu \in \mathds{K} \right\} \right\}.
\] \\

It is clear now that no KFA in the good category of finite-dimensional vector spaces can afford a generating function outside of $\mathcal{G}$, hence restricting the admissible open-closed TQFTs with target category $\catname{Vec}_{\mathds{K}}$.  More generally, we study in the next section the different kinds of good target categories we can use for open-closed TQFTs.

\end{subsection}

\end{section}

\begin{section}{The categories $\calcatname{C}_{\chi}$ when $\chi$ is good}
\label{section-rep-categories}

Let $\chi\in\text{Ch}_{\mathcal{T}_{\text{KFA}}}^{\mathcal{G}}$ be a good character with associated power series
\[
    f_{\chi}(X,Y)= \alpha_1+\alpha_X X+\alpha_Y Y+\alpha_{Y^2} Y^2 + \sum_{(\lambda,\mu)\in\mathcal{A}} \alpha_{\lambda,\mu}\frac{1}{1-\lambda X}\frac{1}{1-\mu Y},
\]
where $\mathcal{A}$ is a finite subset of $\mathds{K}^*\times\mathds{K}$, and $\alpha_1,\alpha_X,\alpha_Y ,\alpha_{Y^2} \in\mathds{K}, \alpha_{\lambda,\mu}\in\mathds{K}^*$, i.e., $\chi$ is the character satisfying: \\
\[
    \chi\left( \Sigma_{g,w}\right)=
    \sum_{(\lambda,\mu)\in\mathcal{A}} \alpha_{\lambda,\mu} \lambda^g \mu^w + 
        \left\{
        \begin{array}{ll}
            \alpha_1 & \text{ if } g=0, w=0 \\
            \alpha_X & \text{ if } g=1, w=0 \\
            \alpha_Y & \text{ if } g=0, w=1 \\
            \alpha_{Y^2} & \text{ if } g=0, w=2 \\
            0 & \text{ else}
        \end{array}
        \right.  .
\]

We are interested in the description of the category $\calcatname{C}_{\chi}$. We first show that it is equivalent to the Deligne product of two categories, splitting in some sense the semisimple and the non-semisimple parts of the KFA generating $\calcatname{C}_{\chi}$, and then describe each category as an interpolation of some categories of representations of groups.

\begin{subsection}{$\calcatname{C}_{\chi}\simeq \calcatname{C}_{\chi}^{\text{ss}}\boxtimes\calcatname{C}_{\chi}^{\text{non-ss}}$}

We claim the following.
\begin{proposition} \label{proposition-splitting-category} We have an equivalence of categories
    \[
        \calcatname{C}_{\chi}\simeq \calcatname{C}_{\chi}^{\text{ss}}\boxtimes\calcatname{C}_{\chi}^{\text{non-ss}}
    \]
    where 
    \begin{itemize}
        \item $\calcatname{C}_{\chi}^{\text{ss}}\simeq \underset{(\lambda,\mu)\in\mathcal{A}}{\boxtimes} \calcatname{C}_{\alpha_{\lambda,\mu}\chi_{(\lambda,\mu)}}$, with $f_{\chi_{(\lambda,\mu)}}(X,Y) = \frac{1}{1-\lambda X}\frac{1}{1-\mu Y}$,
        \item $\calcatname{C}_{\chi}^{\text{non-ss}}=\calcatname{C}_{\chi^{\text{non-ss}}}$, with $f_{\chi^{\text{non-ss}}}(X,Y)=\alpha_1+\alpha_X X+\alpha_Y Y+\alpha_{Y^2} Y^2$.
    \end{itemize}
\end{proposition}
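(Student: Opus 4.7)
The overall plan is to leverage two handles: first, that the sum of characters corresponds to a direct sum of KFAs (Proposition \ref{proposition-sum-product-KFAs}), which together with the universal property (Proposition \ref{proposition-universal-property}) yields a functor from $\calcatname{C}_{\chi}$ into the claimed Deligne product; and second, that the generating KFA inside $\calcatname{C}_{\chi}$ itself splits via spectral idempotents, giving a functor in the opposite direction. I would start by decomposing the character $\chi = \chi^{\text{ss}} + \chi^{\text{non-ss}}$ according to the stated splitting of $f_{\chi}$, and recording that both summands are good via the models already exhibited: $\chi^{\text{non-ss}}$ by Example \ref{example-non-semisimple} and $\chi^{\text{ss}}$ as a finite sum of $\catname{Rep}(GL_d)$-valued good characters from Proposition \ref{proposition-list-of-good-characters}.

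For the forward direction, I would take the generating KFAs $V^{\text{ss}} \in \calcatname{C}_{\chi^{\text{ss}}}$ and $V^{\text{non-ss}} \in \calcatname{C}_{\chi^{\text{non-ss}}}$, place them both inside the Deligne product $\calcatname{C}_{\chi^{\text{ss}}} \boxtimes \calcatname{C}_{\chi^{\text{non-ss}}}$ (which is again good), and form their direct sum as a KFA via Proposition \ref{proposition-sum-product-KFAs}. This combined KFA affords $\chi^{\text{ss}} + \chi^{\text{non-ss}} = \chi$, so Proposition \ref{proposition-universal-property} produces a symmetric monoidal $\mathds{K}$-linear functor $\calcatname{C}_{\text{univ}}^{\mathcal{T}_{\text{KFA}}} \to \calcatname{C}_{\chi^{\text{ss}}} \boxtimes \calcatname{C}_{\chi^{\text{non-ss}}}$ which annihilates all $\chi$-negligible morphisms and therefore descends, after Karoubi completion, to a functor $\bar{F}:\calcatname{C}_{\chi} \to \calcatname{C}_{\chi^{\text{ss}}} \boxtimes \calcatname{C}_{\chi^{\text{non-ss}}}$.

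For the reverse direction, the idea is to split the generating KFA $V = V_I \oplus V_{S^1}$ of $\calcatname{C}_{\chi}$ itself using idempotents. Inside the semisimple good category $\calcatname{C}_{\chi}$, the commuting closed-sector endomorphisms $G$ and $W$ of $V_{S^1}$ have joint trace generating series prescribed by $f_{\chi}$ via Proposition \ref{properties-sigma-g-w}, and by Corollary \ref{trace-of-endomorphisms} this series takes a spectral form with isolated eigenvalue pairs $(\lambda,\mu) \in \mathcal{A}$ together with a complementary piece whose dimension absorbs the polynomial coefficients $\alpha_1, \alpha_X, \alpha_Y, \alpha_{Y^2}$. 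I would take the corresponding orthogonal spectral idempotents on $V_{S^1}$, transport them to $V_I$ through the zipper $\iota$, the cozipper $\iota^*$ and the hole endomorphism $H$, and thereby exhibit $V$ as a direct sum of sub-KFAs affording respectively $\chi^{\text{ss}}$ and $\chi^{\text{non-ss}}$. A second application of Proposition \ref{proposition-universal-property} then yields the reverse functor.

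The main obstacle is checking that the spectral idempotents on $V_{S^1}$ lift to honest KFA-level idempotents on the whole of $V$, i.e., that they respect the Frobenius structures on both sectors \emph{and} commute with the zipper and cozipper, so that the resulting summands are bona fide sub-KFAs and not merely sub-objects in the ambient tensor category. This compatibility has to be squeezed out of the axioms coupling the two sectors of $\mathcal{T}_{\text{KFA}}$ --- in particular the axiom that $\iota$ is an algebra homomorphism, the knowledge relation, and the Cardy relation --- which together force the $\iota$-image of each spectral idempotent of $(G,W)$ to be an idempotent on $V_I$ compatible with the open-sector product, coproduct, unit and counit. Once this lift is established, mutual quasi-inverseness of the two functors follows formally because each is determined by its action on the generating KFA, and then an identical spectral-idempotent argument iterated over the finite indexing set $\mathcal{A}$ delivers the finer decomposition $\calcatname{C}_{\chi}^{\text{ss}} \simeq \underset{(\lambda,\mu)\in\mathcal{A}}{\boxtimes} \calcatname{C}_{\alpha_{\lambda,\mu}\chi_{(\lambda,\mu)}}$ claimed in the second bullet of the proposition.
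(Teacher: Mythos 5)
Your core strategy---splitting the generating KFA of $\calcatname{C}_{\chi}$ into sub-KFAs via spectral idempotents of the commuting endomorphisms of the closed and open sectors---is exactly the route the paper takes. However, the step you flag as ``the main obstacle'' is precisely the content of the paper's proof, and your proposed way of discharging it is not quite the right mechanism. The compatibility of the idempotents with the Frobenius structures does not come from the Cardy or knowledge relations; it comes from the much simpler facts that the handle $G$ moves freely through any closed-sector cobordism and the hole $H$ moves freely through any open-sector cobordism (propositions 3.15--3.16 in \cite{lauda-pfeiffer-1}), so that any polynomial in $G$ (resp.\ $H$) automatically commutes past $\nabla_{S^1},\Delta_{S^1}$ (resp.\ $\nabla_I,\Delta_I$). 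Compatibility with the zipper and cozipper then follows from the intertwining relations $\iota\circ W=H\circ\iota$ and $W\circ\iota^*=\iota^*\circ H$, not from commutation. Moreover, to refine the open-sector decomposition by the $\lambda$'s one cannot simply ``transport'' $e_\lambda$ through $\iota$: the paper has to introduce an auxiliary endomorphism $G'=\frac{1}{\mu}(\iota\circ G\circ\iota^*)$ on each $\text{Im}(a_\mu)$ and compute its minimal polynomial separately. You also miss that the open-sector idempotents $a_\mu$ only exist for $\mu\neq 0$: when $(\lambda,0)\in\mathcal{A}$ the corresponding sub-KFA is $0\oplus\text{Im}(e_{\lambda,0})$, with trivial open sector, and your ``transport'' prescription would break down there.

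Your supplementary forward-direction argument also has a gap: a symmetric monoidal functor $\calcatname{C}_{\text{univ}}^{\mathcal{T}_{\text{KFA}}}\rightarrow\calcatname{C}_{\chi^{\text{ss}}}\boxtimes\calcatname{C}_{\chi^{\text{non-ss}}}$ obtained from the universal property does not automatically kill $\chi$-negligible morphisms. The non-degeneracy of the trace pairing in the target only forces $\overline{F}(f)=0$ for negligible $f$ if every test morphism $\overline{g}$ in the target is of the form $\overline{F}(g)$, i.e.\ one needs fullness of $\overline{F}$ first (this is exactly the content of the factorisation proposition the paper proves later, following \cite{brundan-semisimplification}). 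Establishing that fullness amounts to showing the projectors onto the Deligne factors lie in the image of $\overline{F}$, which is again the explicit idempotent construction---so your forward functor cannot be used to bypass the hard step.
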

To prove this statement, we will show that the generating KFA of $\mathcal{C}_{\chi}$ splits as a direct sum of sub-KFAs, one for each of the Deligne factors in the above decomposition. We start by proving some lemmata. \\

Define $\mathcal{A}_{X}$ (resp. $\mathcal{A}_{Y}$) to be the image of $\mathcal{A}$ under the projection of $\mathds{K}^*\times \mathds{K}$ on its first factor (resp. second factor). In other words, $\mathcal{A}_{X}$ is the set of distinct $\lambda$'s and $\mathcal{A}_{Y}$ the set of distinct $\mu$'s in $\mathcal{A}$.

\begin{lemma} \label{lemma-minimal-polynomial-G}
    Seen as an endomorphism in $\calcatname{C}_{\chi}$, the handle $G$ has minimal polynomial 
    \[ g(t)=t^k\prod_{\lambda\in\mathcal{A}_{X}} (t-\lambda) 
        \qquad \text{for some } k\in\{0,1,2\}.
    \]
    The value of $k$ depends on the vanishing of $\alpha_1, \alpha_X, \alpha_Y$, and  $\alpha_{Y^2}$. The precise value is irrelevant for our discussion.
\end{lemma}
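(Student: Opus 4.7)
The plan is to extract the minimal polynomial of $G$ from the generating function $f_{\chi}$ in two steps: identify its roots using the joint trace series of $G$ and $W$, and bound the multiplicities by exhibiting an explicit good model of $\chi$.

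First, Proposition \ref{properties-sigma-g-w} gives $\chi(\Sigma_{g+1,w}) = \text{Tr}_{S^1}(G^g \circ W^w)$, so substituting the assumed form of $f_{\chi}$ and gathering terms yields
\[ \sum_{g,w \geqslant 0} \text{Tr}_{S^1}(G^g \circ W^w)\, X^g Y^w \;=\; \alpha_X \;+\; \sum_{(\lambda,\mu) \in \mathcal{A}} \frac{\alpha_{\lambda,\mu}\,\lambda}{(1 - \lambda X)(1 - \mu Y)}. \]
By Corollary \ref{trace-of-endomorphisms} applied to the commuting pair $(G,W)$ in the good category $\calcatname{C}_{\chi}$, the right-hand side must coincide with the partial-fraction expansion arising from the joint generalised-eigenspace decomposition of $(G,W)$. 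Since the functions $\tfrac{1}{(1-\lambda X)(1-\mu Y)}$ are linearly independent over distinct pairs $(\lambda,\mu)$ and each coefficient $\alpha_{\lambda,\mu}\lambda$ is nonzero, each pair in $\mathcal{A}$ is a genuine joint generalised eigenvalue of $(G,W)$, with a component of $S^1$ of nonzero categorical dimension. In particular, each $\lambda \in \mathcal{A}_X$ is a root of the minimal polynomial of $G$, so $\prod_{\lambda \in \mathcal{A}_X}(t-\lambda)$ divides it.

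Next, to cap the multiplicities from above I would realise $\chi$ by an explicit KFA in a good category via Proposition \ref{proposition-sum-product-KFAs}: take the direct sum of one KFA of Example \ref{example-semisimple} style (with parameters adjusted so that the associated term is $\alpha_{\lambda,\mu}\,(1-\lambda X)^{-1}(1-\mu Y)^{-1}$) for each pair $(\lambda,\mu)\in\mathcal{A}$, together with one KFA of Example \ref{example-non-semisimple-general} realising the polynomial part $\alpha_1 + \alpha_X X + \alpha_Y Y + \alpha_{Y^2} Y^2$. A direct computation on the closed sector of Example \ref{example-semisimple} gives $G = \lambda\cdot\mathds{1}$, while on the closed sector $A_{\beta,\sigma}(q)$ of Example \ref{example-non-semisimple} one finds $G(1) = \tfrac{q+2}{\beta}\, b$ and $G(b) = G(b_i) = 0$, hence $G^2 = 0$. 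Therefore the direct-sum KFA $V$ affording $\chi$ satisfies $G_V^{\,2} \cdot \prod_{\lambda \in \mathcal{A}_X}(G_V - \lambda) = 0$.

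Combining the two directions forces the minimal polynomial of the generating handle $G$ in $\calcatname{C}_{\chi}$ to be of the stated shape $t^k\prod_{\lambda}(t - \lambda)$ with $k\in\{0,1,2\}$. The main subtle step is the second one: transferring the polynomial relation from an external model to the generating KFA in $\calcatname{C}_{\chi}$. This is done via $\chi$-negligibility --- any closed diagram containing $G_V^{\,2}\prod_{\lambda}(G_V - \lambda)$ vanishes when realised in the target good category via $V$, so it evaluates to $0$ under $\chi$, which puts $G^{\,2}\prod_{\lambda}(G - \lambda)$ into the tensor ideal $\mathfrak{N}_{\chi}$. Pinning down the exact value of $k$ would require tracking which of $\alpha_1, \alpha_X, \alpha_Y, \alpha_{Y^2}$ vanish, but this refinement is not needed in what follows.
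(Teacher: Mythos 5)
Your overall strategy is sound and genuinely different from the paper's: the paper proves the lemma by a direct $\chi$-negligibility computation, pairing $g(G)$ against the Lauda--Pfeiffer spanning set $\{\sigma_{g,w}\}\cup\{\sigma_{x,y}\circ u_{S^1}\circ\epsilon_{S^1}\circ\sigma_{z,t}\}$ of $\text{End}(S^1)$ and using a Vandermonde-type identity, which works for arbitrary coefficients $\alpha_{\lambda,\mu}$ because it only manipulates character values. Your first step (minimality: every $\lambda\in\mathcal{A}_X$ is genuinely a root, via Corollary \ref{trace-of-endomorphisms} and the linear independence of the functions $\tfrac{1}{(1-\lambda X)(1-\mu Y)}$) is correct and is in fact an argument the paper's proof leaves implicit. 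Your transfer principle in the last paragraph --- a polynomial relation satisfied by $G_V$ for \emph{any} model $V$ affording $\chi$ forces the corresponding relation to be $\chi$-negligible --- is also valid.

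The gap is in the construction of the external model. The leading coefficient of the term afforded by Example \ref{example-semisimple} (and by its $\catname{Rep}(GL_d)$ generalisation in appendix \ref{section-annex-gl}) is not a free parameter: the closed sector $F_{\alpha^2}$ forces $f=\tfrac{1}{\lambda}\tfrac{1}{1-\lambda X}\tfrac{1}{1-\mu Y}$ with $\lambda=\alpha^{-2}$, so the coefficient is pinned to $1/\lambda$; direct sums of copies only produce integer multiples of this. Hence "parameters adjusted so that the associated term is $\alpha_{\lambda,\mu}(1-\lambda X)^{-1}(1-\mu Y)^{-1}$" is not available within the family you cite, and for a general good $\chi$ your candidate model does not afford $\chi$ --- which invalidates the negligibility transfer. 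The repair is the construction of section \ref{section-rep-categories-case-tneq1}: tensor (in the sense of Proposition \ref{proposition-sum-product-KFAs}) each semisimple summand with the commutative Frobenius algebra generating $\catname{Rep}(S_t)$ for $t=\lambda\alpha_{\lambda,\mu}$, viewed as a KFA $(W,W,\mathds{1}_W,\mathds{1}_W)$; since its handle is the identity, the handle of the product is still $\lambda\cdot\mathds{1}$, and the coefficient becomes $\alpha_{\lambda,\mu}$ as required. With that insertion (and with Example \ref{example-non-semisimple-general} in $\catname{Rep}(O_{d_1})\boxtimes\catname{Rep}(O_{d_2})$ used for the polynomial part, so that $\alpha_X$ and $\alpha_{Y^2}$ need not be integers), your argument goes through.
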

\begin{proof} 
    We must show that $g(t)$ is the minimal polynomial such that $g(G)$, when seen as an endomorphism in $(\calcatname{C}_{\text{univ}}^{\mathcal{T}_{\text{KFA}}})^{\text{IC}}$, is $\chi$-negligible. Call $S^1$ the image of $\mathds{1}_{S^1}$ in $(\mathcal{C}_{\text{univ}}^{\mathcal{T}_{\text{KFA}}})^{\text{IC}}$. The endomorphisms of $S^1$ form a $\mathds{K}[X]_{\text{aug}}^{\mathcal{T}_{\text{KFA}}}$-module spanned by the sets $\{ \sigma_{g,w} : g,w \in \mathds{N} \}$ and $\{ \sigma_{x,y} \circ u_{S^1} \circ \epsilon_{S^1} \circ \sigma_{z,t} : x,y,z,t \in \mathds{N} \}$, where $\sigma_{n,m}:=G^n\circ W^m$. As $G$ commutes with $\sigma_{m,n}$, so does $g(G)$. It is thus sufficient to show 

    \begin{equation} \label{equation-1-lemma-minimal-polynomial-G}
        \forall p,q\in\mathds{N},~\chi\left(\epsilon_{S^1} \circ \sigma_{p,q} \circ g(G) \circ u_{S^1}\right)=0
    \end{equation}
    because 
    \[
        \begin{array}{rcl}
            \text{Tr}(\sigma_{g,w}\circ g(G)) & = & \epsilon_{S^1}\circ\sigma_{g+1,w}\circ g(G)\circ u_{S^1}, \\
            \text{Tr}(\sigma_{x,y} \circ u_{S^1} \circ \epsilon_{S^1} \circ \sigma_{z,t} \circ g(G)) & = &  \epsilon_{S^1}\circ\sigma_{x+z,y+t}\circ g(G)\circ u_{S^1}.
        \end{array}
    \]
    In order to prove eq. (\ref{equation-1-lemma-minimal-polynomial-G}), we rewrite: 
    \[
        g(G) = G^k
        \sum_{J\subseteq\mathcal{A}_X} G^{|\mathcal{A}_X|-|J|} (-1)^{|J|}  \prod_{x\in J} x
    \]
    such that, for all $p,q\in \mathds{N}$, 
    \begin{align*}
        \chi\left(\epsilon_{S^1} \circ \sigma_{p,q} \circ g(G) \circ u_{S^1}\right) & = \sum_{J\subseteq\mathcal{A}_X} \chi\left(\epsilon_{S^1} \circ \sigma_{p+k+|\mathcal{A}_X|-|J|, q} \circ u_{S^1}\right) (-1)^{|J|}  \prod_{x\in J} x \\
        & = \sum_{J\subseteq\mathcal{A}_X} \sum_{(\lambda,\mu)\in\mathcal{A}} \alpha_{\lambda,\mu} \lambda^{p+k+|\mathcal{A}_X|-|J|} \mu^q  (-1)^{|J|}  \prod_{x\in J} x \\
        & = \sum_{(\lambda,\mu)\in\mathcal{A}} \alpha_{\lambda,\mu} \lambda^{p+k} \mu^q
        \sum_{J\subseteq\mathcal{A}_X}  \lambda^{|\mathcal{A}_X|-|J|}   (-1)^{|J|}  \prod_{x\in J} x \\
        & = \sum_{(\lambda,\mu)\in\mathcal{A}} \alpha_{\lambda,\mu} \lambda^{p+k} \mu^q
        \prod_{x\in\mathcal{A}_X} (\lambda - x) = 0,
    \end{align*}
    where the second equality is valid for large enough $k$ (in general, $k=2$ is minimal, but the vanishing of some of the $\alpha_1, \alpha_X, \alpha_Y, \alpha_{Y^2}$ might allow for taking $k=1$ or even $k=0$). 
\end{proof}

We have orthogonal idempotent endomorphisms of $S^1$ in $\calcatname{C}_{\chi}$:
\[
\begin{matrix}
    e_{\lambda} & = & \displaystyle\frac{G^k}{\lambda^k}\prod_{\lambda \neq \lambda' \in \mathcal{A}_X} \frac{G-\lambda'}{\lambda-\lambda'}, & \lambda \in \mathcal{A}_X, & \qquad & 
    \Tilde{e} & = & \displaystyle \mathds{1}_{S^1}-\sum_{\lambda \in \mathcal{A}_X} e_{\lambda}
\end{matrix}.
\]

\begin{proposition} \label{proposition-decomposition-S-1}
    The direct sum decomposition $S^1= \text{Im}(\Tilde{e}) \bigoplus_{\lambda\in\mathcal{A}_X} \text{Im}(e_{\lambda})$ is a direct sum of commutative sub-Frobenius algebras.
\end{proposition}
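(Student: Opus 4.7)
The plan is to translate the question of decomposing $S^1$ as a direct sum of Frobenius algebras into a question about the idempotent \emph{elements} $\textbf{1}\to S^1$ that the morphisms $e_\lambda$ and $\tilde{e}$ arise from. The guiding fact is that, in a commutative Frobenius algebra, a decomposition of the unit into mutually orthogonal idempotent elements always induces a direct sum decomposition as commutative Frobenius algebras.

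The first step is the key lemma that the handle equals multiplication by the element $g:=G\circ u_{S^1}:\textbf{1}\to S^1$, i.e.,
\[
    G \;=\; \nabla_{S^1}\circ(g\otimes \mathds{1}_{S^1}).
\]
This follows by writing $G=G\circ\nabla_{S^1}\circ(u_{S^1}\otimes\mathds{1}_{S^1})$ via unitality, applying the Frobenius relation $\Delta_{S^1}\circ\nabla_{S^1}=(\mathds{1}_{S^1}\otimes\nabla_{S^1})\circ(\Delta_{S^1}\otimes\mathds{1}_{S^1})$, and then invoking associativity of $\nabla_{S^1}$. Since $e_\lambda$ and $\tilde{e}$ are polynomials in $G$, setting $p_\lambda:=e_\lambda\circ u_{S^1}$ and $\tilde{p}:=\tilde{e}\circ u_{S^1}$ gives $e_\lambda=\nabla_{S^1}\circ(p_\lambda\otimes\mathds{1}_{S^1})$ and $\tilde{e}=\nabla_{S^1}\circ(\tilde{p}\otimes\mathds{1}_{S^1})$. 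Orthogonality and completeness of the endomorphisms, combined with unitality (plug $u_{S^1}$ into the remaining slot), translate to $\nabla_{S^1}\circ(p_\lambda\otimes p_\mu)=\delta_{\lambda\mu}p_\lambda$, $\nabla_{S^1}\circ(p_\lambda\otimes\tilde{p})=0$, $\nabla_{S^1}\circ(\tilde{p}\otimes\tilde{p})=\tilde{p}$, and $\tilde{p}+\sum_\lambda p_\lambda=u_{S^1}$, so the $p_\lambda$ and $\tilde{p}$ are mutually orthogonal idempotent elements of $S^1$ summing to the unit.

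The second step is to show that $\nabla_{S^1}$ and $\Delta_{S^1}$ respect the decomposition. By the previous step and commutativity, $\nabla_{S^1}\circ(e_\lambda\otimes e_\mu)=\delta_{\lambda\mu}\,e_\lambda\circ\nabla_{S^1}$. One Frobenius relation yields $\Delta_{S^1}\circ e_\lambda=(e_\lambda\otimes\mathds{1}_{S^1})\circ\Delta_{S^1}$, and the other together with cocommutativity yields $\Delta_{S^1}\circ e_\lambda=(\mathds{1}_{S^1}\otimes e_\lambda)\circ\Delta_{S^1}$; hence $(e_\mu\otimes e_\nu)\circ\Delta_{S^1}\circ e_\lambda$ vanishes unless $\mu=\nu=\lambda$. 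Analogous identities hold for $\tilde{e}$.

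Finally, on each summand $\text{Im}(e_\lambda)$ and $\text{Im}(\tilde{e})$ I would transport a commutative Frobenius algebra structure from $S^1$ by conjugating its structure morphisms by the inclusion and projection of the summand, with new units $p_\lambda$ and $\tilde{p}$ (viewed now as morphisms into the direct summand). Thanks to the restriction properties of the previous step, each axiom (associativity, coassociativity, unitality, counitality, Frobenius relation, commutativity) transfers directly from $S^1$, and the decomposition $\mathds{1}_{S^1}=\tilde{e}+\sum_\lambda e_\lambda$ is realised not only at the level of objects but also at the level of Frobenius algebra structure maps. The main obstacle in this plan is the double use of the Frobenius relation (together with cocommutativity) needed to show that $\Delta_{S^1}$ descends to each summand; once that identity is in hand, everything else reduces to formal manipulation involving the element-level computation of the second paragraph.
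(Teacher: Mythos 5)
Your proof is correct and follows essentially the same route as the paper: the paper's argument also rests on the fact that the handle $G$ --- and hence every polynomial in it, in particular the idempotents $e_{\lambda}$ and $\Tilde{e}$ --- can be moved freely through the closed-sector Frobenius structure, so that the product, coproduct, (co)unit and Frobenius relation all restrict to each summand. The only difference is presentational: you derive this movability from unitality, associativity and the Frobenius relation by recasting $G$ as multiplication by the element $G\circ u_{S^1}$, whereas the paper cites Proposition 3.16 of Lauda--Pfeiffer for it.
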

\begin{proof} The proof relies on the fact that the handle $G$ can be freely moved in any cobordism of the closed sector (see proposition 3.16 in \cite{lauda-pfeiffer-1}) -- and so do all the aforementioned idempotents. As a result, the commutative Frobenius algebra structure acts separately on each summand. For instance, for $\lambda_1, \lambda_2, \lambda_3 \in \mathcal{A}_X$, we have $e_{\lambda_1} \circ \nabla_{S^1} \circ (e_{\lambda_2} \otimes e_{\lambda_3}) = e_{\lambda_1} \circ e_{\lambda_2} \circ e_{\lambda_3} \circ \nabla_{S^1}$ which is $0$ if the three $\lambda_1, \lambda_2, \lambda_3$ are different (with similar results for the rest of the structure).
\end{proof}

The window $W$ commutes with $G$, so it preserves this decomposition. 
\begin{lemma} 
    Let $\lambda\in\mathcal{A}_X$. On $\text{Im}(e_{\lambda})$, the restriction of $W$ has minimal polynomial 
    \[ w_{\lambda}(t)=\prod_{\substack{\mu\in\mathcal{A}_Y \\ (\lambda,\mu)\in \mathcal{A}}} (t-\mu) .\]
\end{lemma}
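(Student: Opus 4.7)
The plan is to mirror the two-part structure of the proof of lemma \ref{lemma-minimal-polynomial-G}: first show that $w_\lambda(W)\circ e_\lambda$ is $\chi$-negligible, so that $w_\lambda$ kills $W|_{\text{Im}(e_\lambda)}$ in $\calcatname{C}_\chi$; then prove minimality by a Vandermonde argument.

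Write $e_\lambda = p_\lambda(G)$, where $p_\lambda$ is the Lagrange-type polynomial appearing in its definition; since every element of $\mathcal{A}_X$ is nonzero, $p_\lambda(\lambda')=\delta_{\lambda,\lambda'}$ on $\mathcal{A}_X$. As $G$ and $W$ commute, so do $e_\lambda$ and $w_\lambda(W)$, and by the same module description of $\text{End}_{\calcatname{C}_\chi}(S^1)$ used in lemma \ref{lemma-minimal-polynomial-G} it is enough to verify
\[ \chi\!\left(\epsilon_{S^1}\circ G^M\circ W^N\circ w_\lambda(W)\circ e_\lambda\circ u_{S^1}\right) = 0 \quad \text{for all } M,N\in\mathds{N}. \]
Setting $\mathcal{A}_Y^{(\lambda)}:=\{\mu\in\mathcal{A}_Y:(\lambda,\mu)\in\mathcal{A}\}$, expanding $w_\lambda(W)=\sum_{J\subseteq\mathcal{A}_Y^{(\lambda)}}(-1)^{|J|}\!\left(\prod_{\mu\in J}\mu\right)\!W^{|\mathcal{A}_Y^{(\lambda)}|-|J|}$ together with $p_\lambda(G)$ as a polynomial in $G$, and applying the explicit formula for $\chi(\Sigma_{g,w})$ (valid outside the boundary set $\{(0,0),(1,0),(0,1),(0,2)\}$, which the choice of $k$ made in lemma \ref{lemma-minimal-polynomial-G} lets us avoid), the left-hand side factors as
\[ \sum_{(\lambda',\mu')\in\mathcal{A}} \alpha_{\lambda',\mu'}\,(\lambda')^M(\mu')^N\cdot p_\lambda(\lambda')\cdot \prod_{\mu\in\mathcal{A}_Y^{(\lambda)}}(\mu'-\mu). \]
The factor $p_\lambda(\lambda')$ restricts the sum to $\lambda'=\lambda$, forcing $\mu'\in\mathcal{A}_Y^{(\lambda)}$; for each such $\mu'$ the product $\prod_{\mu}(\mu'-\mu)$ vanishes, so the whole expression is zero.

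For minimality, suppose $p(W)\circ e_\lambda$ is $\chi$-negligible for some polynomial $p\in\mathds{K}[W]$. Running the same calculation with $w_\lambda$ replaced by $p$ and using $\lambda\neq 0$ (so that the factor $\lambda^M$ may be cancelled) yields
\[ \sum_{\mu'\in\mathcal{A}_Y^{(\lambda)}} \alpha_{\lambda,\mu'}\,(\mu')^N\,p(\mu') = 0 \quad \text{for all } N\in\mathds{N}. \]
The $\mu'\in\mathcal{A}_Y^{(\lambda)}$ being distinct, the corresponding Vandermonde matrix is invertible, hence $\alpha_{\lambda,\mu'}p(\mu')=0$ for every $\mu'$; since $\alpha_{\lambda,\mu'}\neq 0$ by definition of $\mathcal{A}$, we conclude $p(\mu')=0$ for all $\mu'\in\mathcal{A}_Y^{(\lambda)}$, i.e.\ $w_\lambda\mid p$.

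The main obstacle is handling the low-degree boundary contributions from $\alpha_1,\alpha_X,\alpha_Y,\alpha_{Y^2}$ in the formula for $\chi(\Sigma_{g,w})$: the clean factorisation above only holds when the relevant exponents $(g,w)$ avoid $\{(0,0),(1,0),(0,1),(0,2)\}$. This is precisely what the factor $G^k$ built into $e_\lambda$ absorbs, and the same value of $k$ chosen in lemma \ref{lemma-minimal-polynomial-G} continues to work here, since the present computation involves only additional nonnegative powers of $G$ and $W$.
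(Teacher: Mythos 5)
Your proof is correct and takes essentially the same route as the paper's, which simply carries over the argument of Lemma \ref{lemma-minimal-polynomial-G} \emph{mutatis mutandis} after noting the identity $\chi(\epsilon_{S^1}\circ e_{\lambda}\circ\sigma_{m,n}\circ u_{S^1})=\lambda^m\sum_{\mu:(\lambda,\mu)\in\mathcal{A}}\alpha_{\lambda,\mu}\mu^n$ --- exactly the reduction you obtain from $p_{\lambda}(\lambda')=\delta_{\lambda,\lambda'}$. Your explicit Vandermonde argument for minimality is a welcome addition, since the paper (here as in Lemma \ref{lemma-minimal-polynomial-G}) only verifies that the stated polynomial annihilates the restriction.
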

\begin{proof}
    We must show that 
    \[ w_{\lambda}(W|_{\text{Im}(e_{\lambda})}) = \prod_{\substack{\mu\in\mathcal{A}_Y \\ (\lambda,\mu)\in \mathcal{A}}} (e_{\lambda} \circ W \circ e_{\lambda} - \mu e_{\lambda}) = e_{\lambda} \prod_{\substack{\mu\in\mathcal{A}_Y \\ (\lambda,\mu)\in \mathcal{A}}} (W - \mu)  \]
    is $\chi$-negligible. This is done following \textit{mutatis mutandis} the proof of lemma \ref{lemma-minimal-polynomial-G}, noting that
    \[
        \chi(\epsilon_{S^1} \circ e_{\lambda} \circ \sigma_{m,n} \circ u_{S^1}) = \lambda^m \sum_{\substack{\mu\in\mathcal{A}_Y \\ (\lambda,\mu)\in \mathcal{A}}} \alpha_{\lambda,\mu}  \mu^n.
    \] 
\end{proof}

We have orthogonal idempotent endomorphisms of $S^1$ in $\calcatname{C}_{\chi}$:
\[
    e_{\lambda,\mu} = e_{\lambda} \displaystyle\prod_{\substack{\mu \neq \mu' \in \mathcal{A}_Y \\ (\lambda,\mu') \in \mathcal{A}}} \frac{W-\mu'}{\mu-\mu'}, \qquad (\lambda,\mu) \in \mathcal{A}.
\]

We can decompose $S^1$ further, and by a similar argument as for proposition \ref{proposition-decomposition-S-1}, we obtain 
\begin{proposition} \label{proposition-decomposition-S-2}
    The direct sum decomposition $S^1= \text{Im}(\Tilde{e}) \bigoplus_{(\lambda,\mu)\in\mathcal{A}} \text{Im}(e_{\lambda,\mu})$ is a direct sum of commutative sub-Frobenius algebras.
\end{proposition}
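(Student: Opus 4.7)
The plan is to follow the strategy used in the proof of proposition \ref{proposition-decomposition-S-1} essentially verbatim, once I establish the appropriate sliding property for the window endomorphism $W$. Recall that the proof of proposition \ref{proposition-decomposition-S-1} hinged on two facts: first, $G$ can be freely transported through any cobordism of the closed sector (this is proposition 3.16 in \cite{lauda-pfeiffer-1}); and second, the idempotents $e_\lambda$, being polynomials in $G$, inherit this sliding property, so that for any two idempotents $e_{\lambda_1}, e_{\lambda_2}, e_{\lambda_3}$ indexed by distinct elements of $\mathcal{A}_X$ one obtains $e_{\lambda_1} \circ \nabla_{S^1} \circ (e_{\lambda_2} \otimes e_{\lambda_3}) = e_{\lambda_1} \circ e_{\lambda_2} \circ e_{\lambda_3} \circ \nabla_{S^1} = 0$, with the analogous conclusion for the rest of the commutative Frobenius structure.

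First I would verify that the window $W = \iota^* \circ \iota$ also slides freely through any closed-sector cobordism. Topologically, $W$ is obtained by cutting a disc out of a cylinder, and the zipper/cozipper axioms (in particular the "knowledge" axiom, the fact that $\iota$ is an algebra homomorphism landing in the center, and the duality between $\iota$ and $\iota^*$) guarantee that a window can be translated along a tube past a pair of pants, a cap, or another window. Equivalently, at the algebraic level, $W$ acts on the closed sector as multiplication by the central element $\iota^*(u_I) \in S^1$, and multiplication by any element of a commutative Frobenius algebra trivially commutes with the Frobenius structure up to the usual interchange laws.

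Second, because the idempotents $e_{\lambda,\mu}$ are polynomial expressions in $G$ and $W$, they inherit the sliding property from both generators simultaneously. This is exactly the input needed to repeat the argument of proposition \ref{proposition-decomposition-S-1} one notch finer. Given three pairs $(\lambda_1,\mu_1), (\lambda_2,\mu_2), (\lambda_3,\mu_3) \in \mathcal{A}$, I can write
\[
    e_{\lambda_1,\mu_1} \circ \nabla_{S^1} \circ (e_{\lambda_2,\mu_2} \otimes e_{\lambda_3,\mu_3}) = e_{\lambda_1,\mu_1} \circ e_{\lambda_2,\mu_2} \circ e_{\lambda_3,\mu_3} \circ \nabla_{S^1},
\]
which vanishes as soon as the three pairs are not all equal, by orthogonality of the $e_{\lambda,\mu}$. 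The same calculation handles $\Delta_{S^1}$, $u_{S^1}$, and $\epsilon_{S^1}$, showing that each summand $\text{Im}(e_{\lambda,\mu})$, as well as the complement $\text{Im}(\Tilde{e})$, is closed under the closed-sector Frobenius structure and that there are no mixed morphisms between distinct summands. Commutativity of each summand is inherited from $S^1$.

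The main obstacle is really the first step: spelling out convincingly that $W$ slides freely in the closed sector. In principle this is a standard manipulation of the KFA axioms, but it is a topological statement that deserves its own lemma, and the cleanest way to state it is probably to observe that the morphism $\iota^*(u_I) : \textbf{1} \to S^1$ realises $W$ as "multiplication by a central element" via $W = \nabla_{S^1} \circ (\iota^*(u_I) \otimes \mathds{1}_{S^1})$; once this identity is in hand, every sliding identity required becomes an instance of associativity or Frobenius in the commutative Frobenius algebra $S^1$, and the rest of the proof is a direct transcription of proposition \ref{proposition-decomposition-S-1}.
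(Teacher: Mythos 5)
Your proposal is correct and follows the same route as the paper, whose proof of this statement consists only of the remark that it follows ``by a similar argument as for proposition \ref{proposition-decomposition-S-1}''. The one ingredient you rightly single out as needing justification --- that $W$ slides freely through closed-sector cobordisms because $W = \nabla_{S^1}\circ\left((\iota^*\circ u_I)\otimes \mathds{1}_{S^1}\right)$ realises the window as multiplication by a fixed element of the commutative Frobenius algebra $S^1$ (a consequence of $\iota$ being an algebra homomorphism and $\iota^*$ being its Frobenius dual) --- is exactly what the paper leaves implicit, so your write-up is the more complete of the two.
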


Let us now turn to the decomposition of the symmetric Frobenius algebra part. 

\begin{lemma} \label{lemma-minimal-polynomial-H}
    Seen as an endomorphism in $\calcatname{C}_{\chi}$, the hole $H$ has minimal polynomial 
    \[ h(t)=t^{k'}\prod_{\mu\in\mathcal{A}_{Y}} (t-\mu) 
        \qquad \text{for some } k'\in\{0,1,2\}.
    \]
    The value of $k'$ depends on whether $0\in\mathcal{A}_Y$, and on the vanishing of $\alpha_1, \alpha_X, \alpha_Y$, and  $\alpha_{Y^2}$. The precise value is irrelevant for our discussion.
\end{lemma}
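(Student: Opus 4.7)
The plan is to follow the proof of Lemma \ref{lemma-minimal-polynomial-G} \emph{mutatis mutandis}, replacing $G$ by $H$ and the identity $\Sigma_{g+1,w}=\mathrm{Tr}_{S^1}(G^g \circ W^w)$ by the identity $\Sigma_{0,w+2}=\mathrm{Tr}_I(H^w)$ from Proposition \ref{properties-sigma-g-w}(3).

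First I would recall the standard fact that the hole $H$ is central in the open sector of any symmetric Frobenius algebra: it commutes with $\nabla_I$, $\Delta_I$, $u_I$, $\epsilon_I$, and, by the "knowledge" axiom, with $\iota$ and $\iota^*$. This is a consequence of (co)associativity and the Frobenius relation together with the symmetric property of the pairing; geometrically a hole slides freely along any open cobordism (cf.\ section 3.5 of \cite{lauda-pfeiffer-1}). By a normal-form analysis analogous to that carried out in the closed sector, the endomorphisms of $I$ in $(\calcatname{C}_{\text{univ}}^{\mathcal{T}_{\text{KFA}}})^{\text{IC}}$ form a $\mathds{K}[X]_{\text{aug}}^{\mathcal{T}_{\text{KFA}}}$-module spanned by $\{H^m : m \in \mathds{N}\}$ together with the disconnected rank-one pieces $\{H^x \circ u_I \circ \epsilon_I \circ H^y : x, y \in \mathds{N}\}$. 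Any contribution coming through a closed-sector transit $\iota \circ (\cdot) \circ \iota^*$ reduces to these generators via the Cardy relation and the centrality of $H$.

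Since $H$ commutes with every generator just listed, showing that $h(H)$ is $\chi$-negligible reduces to verifying $\chi(\mathrm{Tr}_I(H^m \circ h(H))) = 0$ for all $m \in \mathds{N}$, together with the analogous identities for the disconnected generators. Expanding $h(t) = t^{k'}\prod_{\mu \in \mathcal{A}_Y}(t - \mu)$ and applying Proposition \ref{properties-sigma-g-w}(3) rewrites this as a linear combination of $\chi(\Sigma_{0, N+2})$ with $N = m + k' + |\mathcal{A}_Y| - |J|$ as $J$ ranges over subsets of $\mathcal{A}_Y$. The bound $k' \leq 2$ is chosen so that $N \geq 1$ regardless of $J$, which guarantees $\chi(\Sigma_{0, N+2}) = \sum_{(\lambda,\mu)\in\mathcal{A}} \alpha_{\lambda,\mu}\mu^{N+2}$ and sidesteps the exceptional coefficients $\alpha_1, \alpha_X, \alpha_Y, \alpha_{Y^2}$. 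Reorganising the sum yields
\[ \sum_{(\lambda,\mu)\in\mathcal{A}} \alpha_{\lambda,\mu}\mu^{m+k'+2} \prod_{\mu' \in \mathcal{A}_Y}(\mu - \mu'), \]
which vanishes termwise since every $\mu$ appearing is a root of the product. A smaller value of $k'$ suffices whenever the corresponding exceptional coefficient vanishes or $0 \in \mathcal{A}_Y$ (so that the factor $t$ is already present in $\prod(t-\mu)$), which explains the case split $k' \in \{0, 1, 2\}$.

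Minimality of $h$ follows by the dual argument, exactly as in Lemma \ref{lemma-minimal-polynomial-G}: deleting any factor $(t - \mu_0)$ with $\mu_0 \in \mathcal{A}_Y$ would leave the contribution $\alpha_{\lambda, \mu_0}\mu_0^{\bullet}$ non-zero in the trace, contradicting $\chi$-negligibility, and deleting a power of $t$ below the minimal required one would let an exceptional coefficient reappear. The main obstacle is the careful identification of the spanning set for $\mathrm{End}(I)$ — which must account for both the disconnected morphisms and the closed-sector transits — together with the verification of centrality of $H$; once these ingredients are in place, the trace calculation transcribes directly from the $G$-case.
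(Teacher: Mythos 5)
Your overall strategy (centrality of $H$, pairing $h(H)$ against a spanning set of $\mathrm{End}(I)$, expanding $h$ and collapsing the trace values to $\sum_{(\lambda,\mu)}\alpha_{\lambda,\mu}\,\mu^{\bullet}\prod_{x\in\mathcal{A}_Y}(\mu-x)=0$) is the same as the paper's, and the closed-sector trace computation you carry out is essentially the one that appears there. The gap is in your spanning set for $\mathrm{End}(I)$. You take it to be $\{H^m\}\cup\{H^x\circ u_I\circ\epsilon_I\circ H^y\}$ and assert that every closed-sector transit $\iota\circ(\cdot)\circ\iota^*$ reduces to these via the Cardy relation and centrality of $H$. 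That is false: the Cardy relation only rewrites $\iota\circ\iota^*$ itself, whereas $\mathrm{End}(I)$ in $(\calcatname{C}_{\text{univ}}^{\mathcal{T}_{\text{KFA}}})^{\text{IC}}$ contains the connected morphisms $\iota\circ G^g\circ W^w\circ\iota^*$ for all $g,w$, and for $g\geqslant 1$ these have genus $g$ and cannot equal any polynomial in the genus-zero morphism $H$ (nor any disconnected combination of genus-zero pieces). The paper's spanning set is $\{\mathds{1}_I\}\cup\{\iota\circ\sigma_{g,w}\circ\iota^*\}\cup\{\iota\circ\sigma_{x,y}\circ u_{S^1}\circ\epsilon_{S^1}\circ\sigma_{z,t}\circ\iota^*\}$ with $\sigma_{n,m}=G^n\circ W^m$, and the pairings of $h(H)$ against the $g\geqslant 1$ generators are precisely the checks your argument omits.

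Those missing checks do succeed, but they require an ingredient you never invoke: the relation $\iota^*\circ H^n\circ\iota=W^{n+1}$, which together with cyclicity of the trace converts every pairing into a closed-sector quantity $\epsilon_{S^1}\circ\sigma_{p,q+1}\circ h(W)\circ u_{S^1}$; its $\chi$-value is $\sum_{(\lambda,\mu)\in\mathcal{A}}\alpha_{\lambda,\mu}\lambda^{p}\mu^{q+1}\,h(\mu)=0$ because every $\mu\in\mathcal{A}_Y$ is a root of $h$. Your use of $\Sigma_{0,w+2}=\mathrm{Tr}_I(H^w)$ only covers the genus-zero part of $\mathrm{End}(I)$, so as written the negligibility of $h(H)$ is not established. (Your remarks on minimality and on the precise value of $k'$ are at the same level of detail as the paper's, which also leaves these points informal, so I would not count them against you.)
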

\begin{proof} 
    We must show that $h(t)$ is the minimal polynomial such that $h(H)$, when seen as an endomorphism in $(\calcatname{C}_{\text{univ}}^{\mathcal{T}_{\text{KFA}}})^{\text{IC}}$, is $\chi$-negligible. Call $I$ the image of $\mathds{1}_{I}$ in $(\mathcal{C}_{\text{univ}}^{\mathcal{T}_{\text{KFA}}})^{\text{IC}}$. The endomorphisms of $I$ form a $\mathds{K}[X]_{\text{aug}}^{\mathcal{T}_{\text{KFA}}}$-module spanned by the sets $\{\mathds{1}_I\}$, $\{ \iota \circ \sigma_{g,w} \circ \iota^* : g,w \in \mathds{N} \}$ and $\{ \iota \circ \sigma_{x,y} \circ u_{S^1} \circ \epsilon_{S^1} \circ \sigma_{z,t} \circ \iota^* : x,y,z,t \in \mathds{N} \}$, where $\sigma_{n,m}:=G^n\circ W^m$. It is sufficient to show 

    \begin{equation} \label{equation-1-lemma-minimal-polynomial-H}
        \forall p,q\in\mathds{N},~\chi\left(\epsilon_{S^1} \circ \sigma_{p,q+1} \circ h(W) \circ u_{S^1}\right)=0
    \end{equation}
    because
    \[
        \begin{array}{rcl}
            \iota^*\circ H^n \circ \iota & = & W^{n+1} \qquad \forall n\in\mathds{N}, \\
            \text{Tr}(h(H)) & = & \epsilon_{S^1} \circ \sigma_{0,2} \circ h(W) \circ u_{S^1}, \\
            \text{Tr}(\iota \circ \sigma_{g,w} \circ \iota^* \circ h(H)) & = & \epsilon_{S^1} \circ \sigma_{g+1,w+1} \circ h(W) \circ u_{S^1}, \\
            \text{Tr}(\iota \circ \sigma_{x,y} \circ u_{S^1} \circ \epsilon_{S^1} \circ \sigma_{z,t} \circ \iota^* \circ h(H)) & = &  \epsilon_{S^1} \circ \sigma_{x+z,y+t+1} \circ h(W) \circ u_{S^1}.
        \end{array}
    \]
    In order to prove eq. (\ref{equation-1-lemma-minimal-polynomial-H}), we rewrite: 
    \[
        h(W) = W^{k'}
        \sum_{J\subseteq\mathcal{A}_Y} W^{|\mathcal{A}_Y|-|J|} (-1)^{|J|}  \prod_{x\in J} x
    \]
    such that, for all $p,q\in \mathds{N}$, 
    \begin{align*}
        \chi\left(\epsilon_{S^1} \circ \sigma_{p,q+1} \circ h(W) \circ u_{S^1}\right) & = \sum_{J\subseteq\mathcal{A}_Y} \chi\left(\epsilon_{S^1} \circ \sigma_{p, q+1+k'+|\mathcal{A}_Y|-|J|} \circ u_{S^1}\right) (-1)^{|J|}  \prod_{x\in J} x \\
        & = \sum_{J\subseteq\mathcal{A}_Y} \sum_{(\lambda,\mu)\in\mathcal{A}} \alpha_{\lambda,\mu} \lambda^{p} \mu^{q+1+k'+|\mathcal{A}_Y|-|J|}  (-1)^{|J|}  \prod_{x\in J} x \\
        & = \sum_{(\lambda,\mu)\in\mathcal{A}} \alpha_{\lambda,\mu} \lambda^{p} \mu^{q+k'+1}
        \sum_{J\subseteq\mathcal{A}_Y}  \mu^{|\mathcal{A}_Y|-|J|}   (-1)^{|J|}  \prod_{x\in J} x \\
        & = \sum_{(\lambda,\mu)\in\mathcal{A}} \alpha_{\lambda,\mu} \lambda^{p} \mu^{q+k'+1}
        \prod_{x\in\mathcal{A}_Y} (\mu - x) = 0,
    \end{align*}
    where the second equality is valid for large enough $k'$ (in general, $k'=2$ is minimal, but it can be lower depending on whether $0\in\mathcal{A}_Y$, and on the vanishing of some of the $\alpha_1, \alpha_X, \alpha_Y, \alpha_{Y^2}$). 
\end{proof}

We have orthogonal idempotent endomorphisms of $I$ in $\calcatname{C}_{\chi}$:
\[
\begin{matrix}
    a_{\mu} & = & \displaystyle\frac{H^k}{\mu^k}\prod_{\mu \neq \mu' \in \mathcal{A}_Y} \frac{H-\mu'}{\mu-\mu'}, & \mu \in \mathcal{A}_Y \backslash \{0\}, & \quad & 
    \Tilde{a} & = & \displaystyle \mathds{1}_{I}-\sum_{\mu \in \mathcal{A}_Y \backslash \{0\}} a_{\mu}
\end{matrix}.
\]

\begin{proposition} \label{proposition-decomposition-I-1}
    The direct sum decomposition $I= \text{Im}(\Tilde{a}) \bigoplus_{\mu\in\mathcal{A}_Y \backslash \{0\}} \text{Im}(a_{\mu})$ is a direct sum of symmetric sub-Frobenius algebras.
\end{proposition}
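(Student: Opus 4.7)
The plan is to adapt the proof of Proposition \ref{proposition-decomposition-S-1} to the open sector, replacing the handle $G$ by the hole $H$. The key input in the closed case was that $G$ can be freely moved through any closed-sector cobordism, which in particular makes every polynomial in $G$ central with respect to $\nabla_{S^1}$ and $\Delta_{S^1}$. The analogous input required for the open sector is the centrality of $H$, i.e.,
\begin{equation*}
\nabla_I \circ (H \otimes \mathds{1}_I) \;=\; \nabla_I \circ (\mathds{1}_I \otimes H) \;=\; H \circ \nabla_I,
\end{equation*}
together with the dual identities for $\Delta_I$ and the obvious compatibilities with $u_I$ and $\epsilon_I$.

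First I would establish this centrality of $H$. Unlike in the closed case, $\nabla_I$ is not commutative, so the centrality is not immediate. However, $H=\nabla_I\circ\Delta_I$ can be unfolded via the Frobenius relation into a loop glued onto the identity strand through the non-degenerate pairing $\epsilon_I\circ\nabla_I$ and its copairing $\Delta_I\circ u_I$. The symmetricity axiom of the open sector says precisely that $\epsilon_I\circ\nabla_I$ is invariant under the symmetric braiding, which allows this loop to slide across $\nabla_I$; this yields the desired centrality. Diagrammatically, it is the categorical form of the classical fact that the Nakayama automorphism of a symmetric Frobenius algebra is trivial, so that the Casimir-type element represented by $H$ lies in the center.

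Once centrality of $H$ is in hand, each $a_\mu$ and $\tilde{a}$, being a polynomial in $H$, is itself central. As a consequence, for any $\mu_1,\mu_2,\mu_3 \in \mathcal{A}_Y\setminus\{0\}$,
\begin{equation*}
a_{\mu_1} \circ \nabla_I \circ (a_{\mu_2}\otimes a_{\mu_3}) \;=\; a_{\mu_1}\circ a_{\mu_2}\circ a_{\mu_3}\circ \nabla_I,
\end{equation*}
which vanishes by orthogonality unless $\mu_1=\mu_2=\mu_3$. A parallel argument handles $\Delta_I$, and $u_I$, $\epsilon_I$ likewise decompose along the idempotents, showing that each summand inherits a Frobenius algebra structure. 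Symmetricity of each restricted Frobenius form is immediate from the symmetricity of $\epsilon_I\circ\nabla_I$ on the whole of $I$.

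The main obstacle will be the centrality step: although it is standard folklore for symmetric Frobenius algebras, it must be justified strictly within the KFA axioms available in $\calcatname{C}_{\chi}$. The cleanest route is a direct string-diagram computation using only the Frobenius relation, associativity, and the symmetricity axiom. As a safety net, one could alternatively transfer the already-established centrality of $G$ via the relation $\iota^*\circ H^n\circ\iota = W^{n+1}$ (Lemma \ref{lemma-minimal-polynomial-H}) combined with the homomorphism axioms for $\iota$ and $\iota^*$, though since $\iota$ need not be surjective this route would require some additional bookkeeping.
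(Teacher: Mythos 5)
Your proposal is correct and follows essentially the same route as the paper: the paper's proof also rests on the centrality of $H$ (which it simply cites as Proposition 3.15 of Lauda--Pfeiffer, the statement that the hole can be freely moved in any open-sector cobordism) and then runs exactly your orthogonality computation $a_{\mu_1}\circ\nabla_I\circ(a_{\mu_2}\otimes a_{\mu_3})=a_{\mu_1}\circ a_{\mu_2}\circ a_{\mu_3}\circ\nabla_I$. The only difference is that you sketch a proof of the centrality step rather than citing it; note in passing that this step already follows from the Frobenius relation alone (the Casimir-type element is central in any Frobenius algebra), so the symmetricity axiom is not actually needed there.
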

\begin{proof} The proof relies on the fact that the hole $H$ can be freely moved in any cobordism of the open sector (see proposition 3.15 in \cite{lauda-pfeiffer-1}) -- and so do all the aforementioned idempotents. As a result, the symmetric Frobenius algebra structure acts separately on each summand. For instance, for $\mu_1, \mu_2, \mu_3 \in \mathcal{A}_Y \backslash \{0\}$, we have $a_{\mu_1} \circ \nabla_{I} \circ (a_{\mu_2} \otimes a_{\mu_3}) = a_{\mu_1} \circ a_{\mu_2} \circ a_{\mu_3} \circ \nabla_{I}$ which is $0$ if the three $\mu_1, \mu_2, \mu_3$ are different (with similar results for the rest of the structure).
\end{proof}

Note that $W$ and $H$ have the same (generalized) eigenvalues. The two relations $\iota \circ W = H \circ \iota$ and $W \circ \iota^* = \iota^* \circ H$ imply that the zipper and cozipper "preserve" the eigenvalues, i.e., we can write them as sums of morphisms: $\iota=\Tilde{\iota}\bigoplus_{\mu\in\mathcal{A}_Y \backslash \{0\}} \iota_{\mu}$ and $\iota^*=\Tilde{\iota}^*\bigoplus_{\mu\in\mathcal{A}_Y \backslash \{0\}} \iota_{\mu}^*$, where
\begin{align*}
    \displaystyle \Tilde{\iota}: \text{Im}(\Tilde{e}) & \leftrightarrows \text{Im}(\Tilde{a}) : \Tilde{\iota}^* \\
    \displaystyle \iota_{\mu}: \bigoplus_{\substack{\lambda\in\mathcal{A}_X, \\ (\lambda,\mu)\in\mathcal{A}}}\text{Im}(e_{\lambda,\mu}) & \leftrightarrows \text{Im}(a_{\mu}) : \iota^*_{\mu}.
\end{align*}
    
Define the piecewise endomorphism $G':I\rightarrow I$ by: 
\[
    \left\{
    \begin{array}{lcl}
        G'|_{\text{Im}(\Tilde{a})} & = & 0 \\
        G'|_{\text{Im}(a_{\mu})} & = & \frac{1}{\mu} (\iota\circ G\circ \iota^*)|_{\text{Im}(a_{\mu})}
    \end{array}
    \right.
\]
which is well defined because $G$ preserves the full decomposition of $S^1$ stated in proposition \ref{proposition-decomposition-S-2}. Note also that $H$ and $G'$ commute.

\begin{lemma} 
    Let $\mu\in\mathcal{A}_Y\backslash\{0\}$. On $\text{Im}(a_{\mu})$, the restriction of $G'$ has minimal polynomial 
    \[ g'_{\mu}(t)=\prod_{\substack{\lambda\in\mathcal{A}_X \\ (\lambda,\mu)\in \mathcal{A}}} (t-\lambda) .\]
\end{lemma}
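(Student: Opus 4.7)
The plan is to follow, \emph{mutatis mutandis}, the strategy used in the proof of Lemma \ref{lemma-minimal-polynomial-G} and the preceding lemma on $w_\lambda$. First I would unfold the powers of $G'$ on $\text{Im}(a_\mu)$. Using the relation $\iota^* \circ \iota = W$ together with the intertwining $H \circ \iota = \iota \circ W$ (which gives $a_\mu(H) \circ \iota = \iota \circ \tilde{a}_\mu$, where $\tilde{a}_\mu := a_\mu(W) = \sum_{\lambda : (\lambda,\mu)\in\mathcal{A}} e_{\lambda,\mu}$), one shows by an easy induction, exploiting $[G,W]=0$ and $\tilde{a}_\mu^2 = \tilde{a}_\mu$, that
\[
    (\mu G')^n\big|_{\text{Im}(a_\mu)} \;=\; a_\mu \circ \iota \circ G^n \circ W^{n-1} \circ \iota^* \circ a_\mu \qquad (n \geqslant 1).
\]
Consequently $g'_\mu\bigl(G'|_{\text{Im}(a_\mu)}\bigr)$ is a $\mathds{K}$-linear combination of morphisms of this form (together with a scalar multiple of $a_\mu$ itself).

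Next, I would reduce $\chi$-negligibility to a scalar check, exactly as in the proof of Lemma \ref{lemma-minimal-polynomial-H}. The endomorphisms of $I$ in $(\calcatname{C}_{\text{univ}}^{\mathcal{T}_{\text{KFA}}})^{\text{IC}}$ are spanned over $\mathds{K}[X]_{\text{aug}}^{\mathcal{T}_{\text{KFA}}}$ by $\{\mathds{1}_I\}$, $\{\iota \circ \sigma_{g,w} \circ \iota^*\}$, and $\{\iota \circ \sigma_{x,y} \circ u_{S^1} \circ \epsilon_{S^1} \circ \sigma_{z,t} \circ \iota^*\}$, and composing any of these with our candidate and taking trace reduces, via $\iota^* \iota = W$ and $H^n \iota = \iota W^n$, to expressions of the form $\chi\bigl(\epsilon_{S^1} \circ \sigma_{p,q} \circ \tilde{a}_\mu \circ \tilde{g}'_\mu \circ u_{S^1}\bigr)$, where $\tilde{g}'_\mu$ is the polynomial in $G$ obtained on the closed side from $g'_\mu(G')$ after collapsing the $(n-1)$ factors of $W$.

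Third, I would evaluate using $\chi(\Sigma_{r+1,s}) = \sum_{(\lambda',\nu)\in\mathcal{A}} \alpha_{\lambda',\nu}\,(\lambda')^r \nu^s$. The insertion of $\tilde{a}_\mu$ acts as the $W$-eigenvalue selector, restricting the sum to $\nu = \mu$; the remaining factor is $\prod_{\lambda : (\lambda,\mu)\in\mathcal{A}}(\lambda'-\lambda)$, which vanishes for every surviving $\lambda'$. For minimality, I would observe that for each $(\lambda,\mu) \in \mathcal{A}$ the summand $\text{Im}(e_{\lambda,\mu})$ is nonzero (as $\alpha_{\lambda,\mu} \neq 0$), and that $\iota_\mu$ restricted to $\text{Im}(e_{\lambda,\mu})$ is a nonzero morphism into $\text{Im}(a_\mu)$ on which $G'$ acts by the eigenvalue $\lambda$; hence dropping any factor $(t-\lambda)$ from $g'_\mu$ would leave a nonzero endomorphism.

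The main obstacle is the bookkeeping in Step~1: tracking how the idempotent $a_\mu$ threads through the alternation of $\iota$ and $\iota^*$ via the intertwining $H^n \iota = \iota W^n$ (and its dual $\iota^* H^n = W^n \iota^*$), and recognising that the resulting "shadow" $\tilde{a}_\mu$ on the closed side is precisely the $W$-eigenvalue idempotent $\sum_{\lambda : (\lambda,\mu)\in\mathcal{A}} e_{\lambda,\mu}$ from Proposition \ref{proposition-decomposition-S-2}. Once this translation is in place, Steps~2--3 become a line-for-line parallel of the computation at the end of the proof of Lemma \ref{lemma-minimal-polynomial-H}.
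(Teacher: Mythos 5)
Your proposal is correct and follows essentially the same route as the paper: reduce $g'_{\mu}(G'|_{\text{Im}(a_{\mu})})$ to closed-sector expressions via $\iota^*\circ\iota=W$ and the intertwining relations, then check $\chi$-negligibility against the spanning set of $\text{End}(I)$ using the evaluation $\chi(\epsilon_{S^1}\circ a_{\mu}\circ\sigma_{m,n}\circ u_{S^1})=\mu^{n}\sum_{\lambda:(\lambda,\mu)\in\mathcal{A}}\alpha_{\lambda,\mu}\lambda^{m}$, whose factor $\prod_{\lambda}(\lambda'-\lambda)$ vanishes. Your explicit minimality argument (nonvanishing of each $\text{Im}(e_{\lambda,\mu})$ since $\alpha_{\lambda,\mu}\neq0$) is a welcome addition that the paper leaves implicit.
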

\begin{proof}
    We must show that 
    \[ g'_{\mu}(G'|_{\text{Im}(a_{\mu})}) = \prod_{\substack{\lambda\in\mathcal{A}_X \\ (\lambda,\mu)\in \mathcal{A}}} (a_{\mu} \circ \frac{1}{\mu} (\iota\circ G\circ \iota^*) \circ a_{\mu} - \lambda a_{\mu}) = a_{\mu} \prod_{\substack{\lambda\in\mathcal{A}_X \\ (\lambda,\mu)\in \mathcal{A}}} (\frac{1}{\mu} (\iota\circ G\circ \iota^*) - \lambda)  \]
    is $\chi$-negligible. This is done following \textit{mutatis mutandis} the proof of lemma \ref{lemma-minimal-polynomial-H}, noting that
    \[
        \chi(\epsilon_{S^1} \circ a_{\mu} \circ \sigma_{m,n} \circ u_{S^1}) = \mu^n \sum_{\substack{\lambda\in\mathcal{A}_X \\ (\lambda,\mu)\in \mathcal{A}}} \alpha_{\lambda,\mu}  \lambda^m.
    \] 
\end{proof}

We have orthogonal idempotent endomorphisms of $I$ in $\calcatname{C}_{\chi}$:
\[
    a_{\lambda,\mu} = a_{\mu} \displaystyle\prod_{\substack{\lambda' \in \mathcal{A}_X \\ (\lambda',\mu) \in \mathcal{A}}} \frac{G'-\lambda'}{\lambda-\lambda'}, \qquad (\lambda,\mu) \in \mathcal{A}, \mu\neq0.
\]

We can decompose $I$ further, and by a similar argument as for proposition \ref{proposition-decomposition-I-1}, we obtain 
\begin{proposition} \label{proposition-decomposition-I-2}
    The direct sum decomposition $I= \text{Im}(\Tilde{a}) \bigoplus_{\substack{(\lambda,\mu)\in\mathcal{A}, \mu\neq0}} \text{Im}(a_{\lambda,\mu})$ is a direct sum of symmetric sub-Frobenius algebras.
\end{proposition}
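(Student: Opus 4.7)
My plan is to mimic the proof strategy of Proposition \ref{proposition-decomposition-I-1}, building on the decomposition already established there. Since we already have $I = \text{Im}(\Tilde{a}) \bigoplus_{\mu \in \mathcal{A}_Y \backslash \{0\}} \text{Im}(a_\mu)$ as a direct sum of symmetric sub-Frobenius algebras, it suffices to further decompose each summand $\text{Im}(a_\mu)$ using the idempotents $a_{\lambda,\mu}$ (for $(\lambda,\mu) \in \mathcal{A}$). The key technical fact to establish is that the endomorphism $G'$, when restricted to $\text{Im}(a_\mu)$, can be freely moved within any cobordism of the symmetric Frobenius algebra structure on $\text{Im}(a_\mu)$. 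This is the analogue of the freedom of $H$ in the open sector used in Proposition \ref{proposition-decomposition-I-1}.

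To show $G'$ can be moved freely on $\text{Im}(a_\mu)$, I would use the definition $G'|_{\text{Im}(a_\mu)} = \frac{1}{\mu}(\iota \circ G \circ \iota^*)|_{\text{Im}(a_\mu)}$ together with: (i) the fact that the handle $G$ can be freely moved in the closed sector (Proposition 3.16 in \cite{lauda-pfeiffer-1}); (ii) the KFA axioms asserting that $\iota$ is an algebra homomorphism and $\iota^*$ is its dual, together with the knowledge axiom; (iii) the fact that $a_\mu$ itself commutes with $\nabla_I, \Delta_I$ on its image by Proposition \ref{proposition-decomposition-I-1}. Concretely, I would slide $\iota^*$ across using the Frobenius/coproduct compatibility, move $G$ inside the closed sector (where it can be placed anywhere), and slide $\iota$ back, obtaining the desired mobility. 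Once $G'$ commutes through everything on $\text{Im}(a_\mu)$, so do all polynomials in $G'$, and in particular the idempotents $a_{\lambda,\mu}$.

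With the mobility of $a_{\lambda,\mu}$ in hand, the symmetric Frobenius algebra structure on $\text{Im}(a_\mu)$ acts separately on each summand $\text{Im}(a_{\lambda,\mu})$. For example, for $\lambda_1, \lambda_2, \lambda_3$ with $(\lambda_i, \mu) \in \mathcal{A}$, one has
\[ a_{\lambda_1,\mu} \circ \nabla_I \circ (a_{\lambda_2,\mu} \otimes a_{\lambda_3,\mu}) = a_{\lambda_1,\mu} \circ a_{\lambda_2,\mu} \circ a_{\lambda_3,\mu} \circ \nabla_I, \]
which vanishes whenever the three indices are pairwise distinct (by orthogonality of the idempotents), and similar identities hold for $\Delta_I$, $u_I \circ \epsilon_I$, and the counit/unit. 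Combined with the decomposition $\mathds{1}_{\text{Im}(a_\mu)} = \sum_{(\lambda,\mu) \in \mathcal{A}} a_{\lambda,\mu}$, this exhibits $\text{Im}(a_\mu)$ as a direct sum of symmetric sub-Frobenius algebras, and grafting on Proposition \ref{proposition-decomposition-I-1} yields the full decomposition claimed.

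The main obstacle is the mobility statement for $G'$. Unlike $H$, which is intrinsically an open-sector endomorphism that can be slid around by the standard Frobenius yoga, $G'$ is defined via a detour through the closed sector. Verifying that this detour is compatible with arbitrary open-sector cobordisms requires a careful diagrammatic argument combining the zipper/cozipper compatibility axioms with the closed-sector handle mobility, and must be done before orthogonality of the $a_{\lambda,\mu}$ can be leveraged.
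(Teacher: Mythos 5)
Your strategy follows the route the paper itself indicates (its proof of this proposition is only the phrase ``by a similar argument as for Proposition \ref{proposition-decomposition-I-1}''), and you correctly isolate the crux: everything hinges on whether $G'$ can be moved freely through open-sector cobordisms. The difficulty is that this mobility claim is false, so the step you flag as ``the main obstacle'' is not merely delicate --- it cannot be carried out as described. The hole $H=\nabla_I\circ\Delta_I$ moves freely because it is multiplication by a central element of $V_I$ (the window element), and multiplication by a central element commutes with $\nabla_I$, $\Delta_I$, $u_I$, $\epsilon_I$ in the required sense. The operator $\iota\circ G\circ\iota^*$ is of a different nature: it factors through the closed sector, and while the knowledge axiom forces its \emph{image} to lie in the centre of $V_I$, it is not itself a multiplication operator. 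Concretely, in the KFA $(F_1(n),F_1,\iota,\iota^*)$ of example \ref{example-semisimple} one computes $(\iota\circ G\circ\iota^*)(a)=\mathrm{tr}(a)\cdot 1$, so
\[
(\iota\circ G\circ\iota^*)\circ\nabla_I(a\otimes b)=\mathrm{tr}(ab)\cdot 1
\qquad\text{whereas}\qquad
\nabla_I\circ\bigl((\iota\circ G\circ\iota^*)\otimes\mathds{1}_I\bigr)(a\otimes b)=\mathrm{tr}(a)\cdot b,
\]
and these differ already for $a=b=e_{11}$ and $n\geqslant 2$. Since the two sides take different values on a model, they are genuinely different morphisms of $(\calcatname{C}_{\text{univ}}^{\mathcal{T}_{\text{KFA}}})^{\text{IC}}$, and no amount of sliding $\iota$ and $\iota^*$ through the zipper/cozipper axioms can identify them. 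Consequently the identity $a_{\lambda_1,\mu}\circ\nabla_I\circ(a_{\lambda_2,\mu}\otimes a_{\lambda_3,\mu})=a_{\lambda_1,\mu}\circ a_{\lambda_2,\mu}\circ a_{\lambda_3,\mu}\circ\nabla_I$, on which your whole argument rests, is unjustified.

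The repair needs a different idea rather than a more careful diagram chase. The endomorphisms of $I$ that do move freely are the multiplications $L_z=\nabla_I\circ(z\otimes\mathds{1}_I)$ by central elements $z:\textbf{1}\rightarrow I$; since the image of $\iota$ is central, elements of the form $z_{\lambda,\mu}:=p_{\lambda,\mu}(G')\circ u_I$ (for suitable interpolation polynomials $p_{\lambda,\mu}$) are central, and the decomposition should be performed with the operators $L_{z_{\lambda,\mu}}$ rather than with the operators $p_{\lambda,\mu}(G')$ themselves. The distinction matters: in the matrix model above, a polynomial in $G'$ without constant term has image inside the \emph{centre} of the open algebra, so $p(G')$ can at best project onto a piece of the centre of a block, whereas $L_{p(G')\circ u_I}$ projects onto the whole block, which is what the proposition requires. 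One must then verify separately that the $L_{z_{\lambda,\mu}}$ are orthogonal idempotents summing to $a_\mu$ --- this does not follow from applying the minimal-polynomial lemma to $G'$ in the way you do --- after which the argument of Proposition \ref{proposition-decomposition-I-1} applies verbatim.
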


We are now ready to prove proposition \ref{proposition-splitting-category}.
\begin{proof}[Proof of proposition \ref{proposition-splitting-category}]
    In $\calcatname{C}_{\chi}$, the defining KFA, $I\oplus S^1$ splits as a direct sum of sub-KFAs,
    \[
        I\oplus S^1 \cong \text{Im}(\Tilde{a}) \oplus \text{Im}(\Tilde{e})
        \bigoplus_{\substack{(\lambda,\mu)\in\mathcal{A} \\ \mu \neq 0}}
        \text{Im}(a_{\lambda,\mu}) \oplus \text{Im}(e_{\lambda,\mu})
        \bigoplus_{(\lambda,0)\in\mathcal{A}}
        0 \oplus \text{Im}(e_{\lambda,0}),
    \]
    where, for each couple, the restrictions of $\iota$ and $\iota^*$ indeed act as well-defined zipper and cozipper. It follows from the values of the $\Sigma_{g,w}$, $g,w\in\mathds{N}$, on each sub-KFA, that: 
    \begin{itemize}
        \item $\text{Im}(a_{\lambda, \mu})\oplus \text{Im}(e_{\lambda, \mu})$, with $(\lambda,\mu)\in\mathcal{A}, \mu\neq0$, affords the character $\alpha_{\lambda, \mu}\chi(\lambda,\mu)$;
        \item $0\oplus \text{Im}(e_{\lambda, 0})$, with $(\lambda,0)\in\mathcal{A}$, affords the character $\alpha_{\lambda, 0}\chi(\lambda,0)$;
        \item $\text{Im}(\Tilde{a})\oplus \text{Im}(\Tilde{e})$ affords the character $\chi^{\text{non-ss}}$.
    \end{itemize}
    Moreover, this decomposition makes apparent the equivalence of categories stated in the proposition.
\end{proof}

We turn now to the description of $\calcatname{C}_{\chi^{\text{non-ss}}}$ and $\calcatname{C}_{\alpha_{\lambda,\mu}\chi(\lambda,\mu)}$ in terms of more familiar categories of representation.

\end{subsection}

\begin{subsection}{The category $\calcatname{C}_{\chi}^{\text{ss}}$}

Let $t,\lambda\in \mathds{K}^*$ and $\mu\in\mathds{K}$. We are interested in the case of the character $\frac{t}{\lambda}\chi(\lambda,\mu)$ which has a generating function on the form
\[
    f_{\frac{t}{\lambda}\chi(\lambda,\mu)}(X,Y) = \frac{t}{\lambda}\frac{1}{1-\lambda X}\frac{1}{1-\mu Y}.
\]
Let $a\in\mathds{K}^*$ be such that $a^2=\lambda$ and define $d:=\frac{\mu}{a}$. By taking $\alpha=\frac{1}{a}$ in proposition \ref{proposition-equivalent-generating-functions}, we see that the generating function
\[
    f_{t\chi(1,d)}(X,Y) = \frac{t}{1-X}\frac{1}{1- d Y}
\]
yields a category $\calcatname{C}_{t\chi(1,d)}$ equivalent to the category $\calcatname{C}_{t\chi(\lambda,\mu)}$. (By taking $\alpha=\frac{1}{-a}$, we obtain  $\calcatname{C}_{t\chi(1,d)}\simeq \calcatname{C}_{t\chi(1,-d)}$.) \\

In the special case $t=1$, we have a direct means to characterise more concretely the category $\calcatname{C}_{\chi(1,d)}$ by using the proposition \ref{proposition-rep-automorphism-group}, as we will see below. For $t\neq 1$, the situation is more complex and is discussed in section \ref{section-rep-categories-case-tneq1}.

\begin{subsubsection}{Case $t=1$, $d=n\in\mathds{N}$}

When $d=n\in\mathds{N}$ is a natural number and $t=1$, we have seen (cf. proposition \ref{proposition-list-of-good-characters}) that the KFA $B_n:=(F_1(n), F_1,\iota, \iota^*)$ (cf. example \ref{example-semisimple}) affords the character $\chi(1,n)$. 

\begin{proposition}
    The KFA $B_n$ has closed $GL_{n^2+1}$-orbit. 
\end{proposition}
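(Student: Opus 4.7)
The strategy is to apply Matsushima's criterion: for a reductive algebraic group $G$ acting on an affine variety, the orbit through a point is closed if and only if the stabilizer at that point is reductive. Here $G = GL_{n^2+1}$ acts on the affine space of tuples of structure tensors of type $t_{\text{KFA}}$ on the underlying vector space $V = F_1(n) \oplus F_1$ of dimension $n^2+1$, and the stabilizer of $B_n$ is by definition the automorphism group $\text{Aut}(B_n)$. It therefore suffices to identify $\text{Aut}(B_n)$ and observe that it is reductive.

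First I would argue that every automorphism $\phi \in GL(V)$ of the KFA structure respects the direct sum decomposition $V = F_1(n) \oplus F_1$. Indeed, the structure tensors of $t_{\text{KFA}}$ include the two orthogonal idempotents $\mathds{1}_I$ and $\mathds{1}_{S^1}$ summing to $\mathds{1}_V$ (cf.\ the disjoint-Frobenius-algebras axiom in section~\ref{subsection-KFA}), so commuting with these two morphisms forces $\phi = \phi_I \oplus \phi_{S^1}$ with $\phi_I \in GL(F_1(n))$ and $\phi_{S^1} \in GL(F_1)$.

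Next I would determine each component. On the one-dimensional closed sector $F_1 \cong \mathds{K}$, an algebra automorphism must fix the unit, so $\phi_{S^1} = \mathds{1}$. On the open sector $F_1(n) = M_n(\mathds{K})$, every algebra automorphism is inner by the Skolem--Noether theorem, so $\phi_I(x) = g x g^{-1}$ for some $g \in GL_n(\mathds{K})$, unique modulo the central torus $\mathds{K}^\times$. Such a conjugation automatically preserves the trace Frobenius form $\epsilon_I$, and hence the coproduct as well (which is determined by the pairing through the Frobenius relation). Compatibility with the zipper and cozipper is automatic since $\phi_I(\iota(1)) = g I_n g^{-1} = I_n = \iota(\phi_{S^1}(1))$ and $\iota^*(\phi_I(e_{ij})) = \operatorname{tr}(g e_{ij} g^{-1}) = \operatorname{tr}(e_{ij}) = \iota^*(e_{ij})$. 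Hence $\text{Aut}(B_n) \cong PGL_n$.

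Finally, $PGL_n$ is a reductive algebraic group (being the quotient of the reductive $GL_n$ by its centre), so Matsushima's criterion yields that the $GL_{n^2+1}$-orbit of $B_n$ is closed. The main care point is the first step, namely ruling out automorphisms that mix the two sectors; once that is established, the remainder follows directly from Skolem--Noether and the reductivity of $PGL_n$.
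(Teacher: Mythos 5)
Your argument rests on the claim that, for a reductive group acting on an affine variety, an orbit is closed \emph{if and only if} its stabilizer is reductive. Only the ``only if'' direction of this is Matsushima's theorem; the converse is false, and it is precisely the converse that you need. A standard counterexample: $\mathds{G}_m$ acting on $\mathds{A}^2$ by $t\cdot(x,y)=(tx,t^{-1}y)$ has the non-closed orbit $\{(t,0):t\neq 0\}$ with trivial (hence reductive) stabilizer. Closer to the present setting: in the variety of unital associative algebra structures on $\mathds{K}^2$, the semisimple algebra $\mathds{K}\times\mathds{K}$ has automorphism group $\mathds{Z}/2$, which is reductive, yet its orbit is not closed, since the family $\mathds{K}[x]/(x^2-t)$ degenerates it to $\mathds{K}[x]/(x^2)$. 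So computing $\mathrm{Aut}(B_n)\cong PGL_n$ (which you do correctly, and which the paper also needs, but for a different purpose --- namely to apply the equivalence $\calcatname{C}_{\chi_{V}}\simeq \catname{Rep}(\mathrm{Aut}(V))$ \emph{after} closedness is known) does not by itself establish that the orbit is closed.

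What actually closes the gap is an argument that no degeneration of $B_n$ can leave its isomorphism class. The paper does this by observing that the unique closed orbit in the closure of the $GL_{n^2+1}$-orbit of $B_n$ is represented by some KFA $B$, and that semisimplicity of the underlying associative algebra is a \emph{closed} condition here because the non-degeneracy of the trace pairing $\langle a,b\rangle=\mathrm{Tr}(L_a\circ L_b)$ is witnessed by an explicit copairing built from the structure tensors ($\frac{1}{n}\Delta_I\circ u_I+\Delta_{S^1}\circ u_{S^1}$), and the identities expressing non-degeneracy are diagrammatic, hence hold throughout the orbit closure. Thus $B$ is semisimple; Wedderburn--Artin and Proposition \ref{proposition-relation-symmetric-FA} then force $B\cong B_n$, so the orbit is closed. (Note that in your $\mathds{K}\times\mathds{K}$ counterexample no such diagrammatic witness of semisimplicity is part of the structure, which is exactly why that degeneration is possible there but not here.) If you want to salvage your approach, you must replace the false converse of Matsushima by an argument of this kind, e.g.\ the Hilbert--Mumford/Kempf criterion applied to all one-parameter subgroups, which in practice reduces to the same semisimplicity analysis.
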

\begin{proof}
    We recall first that the closure of an orbit contains a unique closed orbit (see, e.g., Proposition 2.7. in \cite{meir-interpolation}). Call $B$ the KFA with closed orbit in the closure of the orbit of $B_n$. The KFA $B_n=F_1(n)\oplus F_1$ is an associative unital semisimple algebra (with product $\nabla = \nabla_{F_1(n)}\oplus \nabla_{F_1}$) made of the direct sum of two simple parts. This semisimplicity is equivalent to the non-degeneracy of the following trace pairing: 
    \begin{align*}
        \langle - , - \rangle :  B_n\otimes B_n  & \rightarrow \mathds{K}   \\ 
        a\otimes b & \mapsto \langle a, b \rangle := \text{Tr}(L_a\circ L_b)
    \end{align*}
    where $\text{Tr}$ is the usual trace of endomorphism, and for $a\in B_n$, $L_a$ is the endomorphism 
    \[
        \begin{array}{rcccl}
            L_a & : & B_n  & \rightarrow &B_n \\ 
            & & x & \mapsto & \nabla(a\otimes x). 
        \end{array}
    \]
    This pairing can be written in terms of the structure morphisms as the evaluation in $\catname{Vec}_{\mathds{K}}$ of the following diagram
    \begin{align*}
        \tikzfig{trace-pairing/trace-pairing} \quad = \quad \tikzfig{trace-pairing/trace-pairing-I} + \tikzfig{trace-pairing/trace-pairing-S} \quad & = \quad n \quad \tikzfig{trace-pairing/trace-pairing-simp-I} \quad  +  \quad \tikzfig{trace-pairing/trace-pairing-simp-S} \\
         & = \quad n (\epsilon_I \circ \nabla_I) + (\epsilon_{S^1} \circ \nabla_{S^1})
    \end{align*}
    where we have used the axioms of a Frobenius algebra and the fact that for the KFA $B_n$, we have that $H= n \mathds{1}_{F_1(n)}$ and $G= \mathds{1}_{F_1}$.
    Non-degeneracy means that there is a copairing $\gamma: \mathds{K}\rightarrow B_n \otimes B_n$, which is
    \[
        \gamma = \left\{ 
        \begin{matrix} \displaystyle
            \frac{1}{n} (\Delta_I \circ u_I) + (\Delta_{S^1} \circ u_{S^1}) & \text{ for } n\neq0 \\
            \Delta_{S^1} \circ u_{S^1} & \text{ for } n=0
        \end{matrix}
        \right.,
    \]
    and that with the pairing they satisfy the non-degeneracy conditions
    \[
        (\langle - ,- \rangle \otimes \mathds{1}_{B_n}) \circ (\mathds{1}_{B_n} \otimes \gamma) = \mathds{1}_{B_n} = (\mathds{1}_{B_n} \otimes \langle - ,- \rangle) \circ (\gamma \otimes \mathds{1}_{B_n}).
    \]
    We see that the semisimplicity condition can be expressed in terms of the KFA structure, i.e., as diagrams. This means that semisimplicity is preserved when we go to the closure of the orbit (indeed, the character afforded by $B_n$ and the ones afforded by the KFAs in the closure of the orbit of $B_n$ must coincide): all KFAs in the closure of the orbit of $B_n$ are semisimple associative unital algebras. By the Wedderburn-Artin theorem, any semisimple algebra is a direct sum of matrix algebras. Following proposition \ref{proposition-relation-symmetric-FA}, any KFA structure on a semisimple algebra is a direct sum of KFAs of type $F_{\alpha}(n)$. Combining these facts, $B$ must be isomorphic to $B_n$. Wherefore $B_n$ has closed orbit. 
\end{proof}

\begin{proposition}
    The automorphism group of $B_n$ is $PGL_n:=GL_n/Z(GL_n)$.
\end{proposition}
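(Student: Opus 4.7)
The plan is to unpack what an automorphism of $B_n=(F_1(n),F_1,\iota,\iota^*)$ must be, reduce the problem to classifying algebra automorphisms of the matrix algebra $F_1(n)=M_n(\mathds{K})$, and then invoke the Skolem--Noether theorem. An automorphism of the KFA consists of a pair of Frobenius-algebra automorphisms $\phi_I\colon F_1(n)\to F_1(n)$ and $\phi_S\colon F_1\to F_1$ that intertwine the zipper $\iota$ and the cozipper $\iota^*$. Since $F_1=\mathds{K}$ with unit $1$ and the unit must be preserved, $\phi_S$ is forced to be the identity, so the whole data reduces to $\phi_I$.

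Next, the Frobenius form on $F_1(n)$ is $\epsilon_I(e_{ij})=\delta_{ij}$, which is exactly the usual matrix trace $\mathrm{tr}$. Any algebra automorphism $\phi_I$ of $M_n(\mathds{K})$ is by Skolem--Noether of the form $\phi_I(x)=gxg^{-1}$ for some $g\in GL_n(\mathds{K})$; conjugation preserves $\mathrm{tr}$, so the condition that $\phi_I$ preserves $\epsilon_I$ is automatic, and thus the coproduct and counit conditions follow from the Frobenius relation applied to the symmetric non-degenerate pairing $\epsilon_I\circ\nabla_I$. One then verifies the two compatibilities with $\iota$ and $\iota^*$: the identity matrix is fixed by conjugation, giving $\phi_I\circ\iota=\iota\circ\phi_S=\iota$; and $\iota^*\circ\phi_I=\mathrm{tr}(g(-)g^{-1})=\mathrm{tr}(-)=\iota^*=\phi_S\circ\iota^*$. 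Hence every $g\in GL_n$ defines an automorphism of $B_n$.

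Finally I would compute the kernel of the resulting homomorphism $GL_n\to\mathrm{Aut}(B_n)$: $g$ acts trivially on $F_1(n)$ by conjugation if and only if $g$ commutes with every matrix, i.e. $g\in Z(GL_n)=\mathds{K}^*\cdot\mathds{1}$. Since Skolem--Noether also guarantees surjectivity onto the group of algebra automorphisms of $M_n$, we obtain the exact sequence $1\to Z(GL_n)\to GL_n\to \mathrm{Aut}(B_n)\to 1$, giving the identification $\mathrm{Aut}(B_n)\cong GL_n/Z(GL_n)=PGL_n$.

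The only potentially delicate point is making sure the verification is genuinely exhaustive: one must check that compatibility with both $\iota$ and $\iota^*$ is indeed automatic from conjugation on $F_1(n)$ and that preservation of the coproduct $\Delta_I$ follows from preservation of $\nabla_I$ and $\epsilon_I$ (this uses that $\Delta_I$ is uniquely determined as the copairing of $\epsilon_I\circ\nabla_I$, which is a consequence of the Frobenius relation). Everything else is a direct application of Skolem--Noether together with the fact that the Frobenius form is the trace.
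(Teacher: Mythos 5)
Your proposal is correct and follows essentially the same route as the paper: the paper likewise reduces an automorphism of $B_n$ to a pair of Frobenius-algebra automorphisms of $F_1(n)$ and $F_1$ compatible with the zipper and cozipper, identifies $\mathrm{Aut}(F_1(n))\cong PGL_n$ and $\mathrm{Aut}(F_1)=\{e\}$, and observes that the (co)zipper conditions impose no further constraint. You merely make explicit the ``direct computation'' the paper leaves implicit, via Skolem--Noether and the trace-invariance of conjugation.
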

\begin{proof}
    The automorphism groups of the algebras $F_1(n)$ and $F_1$ are respectively $PGL_n$ and the trivial group $\{ e \}$. By direct computation, the linear isomorphisms of $B_n$ which are automorphisms of both $F_1(n)$ and $F_1$ (as Frobenius algebras) and that preserve the zipper and cozipper -- in other words, the automorphisms of $B_n$ -- consist in the group $\{ l \oplus 1 : l\in PGL_n\} \cong PGL_n$. 
\end{proof}

\begin{corollary}
    The category $\calcatname{C}_{\chi(1,n)}$ is equivalent to the category $\catname{Rep}(PGL_n)$.
\end{corollary}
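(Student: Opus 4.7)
The plan is to simply combine the three results that are already in place: Example \ref{example-semisimple} (which produces the KFA $B_n = (F_1(n), F_1, \iota, \iota^*)$ affording $\chi(1,n)$), the two preceding propositions of this subsection (closedness of the $GL_{n^2+1}$-orbit of $B_n$ and the identification $\mathrm{Aut}(B_n) \cong PGL_n$), and Meir's general Proposition \ref{proposition-rep-automorphism-group}.

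More concretely, first I would observe that $B_n$ is an algebraic structure of type $t_{\mathrm{KFA}}$ in $\catname{Vec}_{\mathds{K}}$ whose underlying vector space has dimension $\dim F_1(n) + \dim F_1 = n^2 + 1$. So choosing a basis, $B_n$ corresponds to a point in the affine variety of KFA structures on a $(n^2+1)$-dimensional vector space, and the group $GL_{n^2+1}(\mathds{K})$ acts by change of basis with an orbit that is closed by the previous proposition. Second, by the first proposition of this subsection, $B_n$ affords the character $\chi(1,n)$, so in the notation of Proposition \ref{proposition-rep-automorphism-group} we have $\chi_V = \chi(1,n)$ with $V = B_n$.

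Then applying Proposition \ref{proposition-rep-automorphism-group} directly yields
\[
    \calcatname{C}_{\chi(1,n)} = \calcatname{C}_{\chi_{B_n}} \simeq \catname{Rep}(\mathrm{Aut}(B_n)) \simeq \catname{Rep}(PGL_n),
\]
where the last equivalence is the content of the second proposition of this subsection. There is essentially no obstacle here: all the substantive work (verifying closedness of the orbit and computing the automorphism group) was done in the two preceding propositions, and the corollary is a one-line deduction from Meir's theorem. The only thing worth flagging is to make explicit that the underlying dimension is $n^2 + 1$ so the reader sees that the ambient group is $GL_{n^2+1}$, matching the hypothesis of Proposition \ref{proposition-rep-automorphism-group}.
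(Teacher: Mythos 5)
Your proposal is correct and is exactly the paper's argument: the paper's proof is the one-line invocation of Proposition \ref{proposition-rep-automorphism-group}, relying on the two preceding propositions (closed orbit, $\mathrm{Aut}(B_n)\cong PGL_n$) and on $B_n$ affording $\chi(1,n)$. Your additional remark about the ambient dimension $n^2+1$ is a harmless explicit check of the hypothesis.
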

\begin{proof}
    This follows from the proposition \ref{proposition-rep-automorphism-group}.
\end{proof}

This result is valid for all $n\in\mathds{N}$. However, the good category $\calcatname{C}_{\chi(1,d)}$ exists for any values of $d\in\mathds{K}$, we thus \textit{define} the interpolation family $\catname{Rep}(PGL_d):=\calcatname{C}_{\chi(1,d)}$.

\end{subsubsection}

\begin{subsubsection}{Case $t\neq1$} 
\label{section-rep-categories-case-tneq1}

Concerning $\calcatname{C}_{t\chi(1,d)}$, although it is not clear whether it relates to any famous categories, it can still be said that there is a KFA in the category $\catname{Rep}(PGL_d)\boxtimes\catname{Rep}(S_t)$ affording the character $t\chi(1,d)$. \\

Following table \ref{table:recap}, we can view $\catname{Rep}(PGL_d)$ and $\catname{Rep}(S_t)$ as respectively the categories $\calcatname{C}_{\chi_1}$ and $\calcatname{C}_{\chi_2}$, where 
\begin{align*}
    f_{\chi_1}(X,Y) = & \frac{1}{1-X}\frac{1}{1-dY}, &
    f_{\chi_2}(X,Y) = & \frac{t}{1-X}.
\end{align*}
By definition, the categories $\calcatname{C}_{\chi_1}$ and $\calcatname{C}_{\chi_2}$ are generated respectively by a KFA $(V_I,V_{S^1},\iota, \iota^*)$ and by a commutative Frobenius algebra $W$. The KFA structure in $\calcatname{C}_{\chi_1}\boxtimes\calcatname{C}_{\chi_2}$ affording $t\chi(1,d)$ is given by $(V_I\boxtimes W, V_{S^1}\boxtimes W, \iota\boxtimes \mathds{1}_W, \iota^*\boxtimes \mathds{1}_W)$, where the Frobenius algebra structures on $V_I\boxtimes W$ and $V_{S^1}\boxtimes W$ are given by the Deligne product of the respective structures, e.g., the unit of $V_I\boxtimes W$ is given by $u_I \boxtimes u_W$, where $u_I$ is the unit of $V_I$ and $u_W$ is the unit of $W$. \\

There is however no equivalence between $\calcatname{C}_{t\chi(1,d)}$ and $\catname{Rep}(PGL_d)\boxtimes\catname{Rep}(S_t)$ taking one KFA to the other because of mismatches in hom-space dimensions: we have for instance that the endomorphism spaces $\text{End}_{\calcatname{C}_{t\chi(1,d)}}(S^1)$ and $\text{End}_{\calcatname{C}_{\chi_1}\boxtimes\calcatname{C}_{\chi_2}}(V_{S^1}\boxtimes W)$ have dimensions $2$ and $4$ respectively.

\end{subsubsection}

\end{subsection}

\begin{subsection}{The category $\calcatname{C}_{\chi}^{\text{non-ss}}$}

Consider the character $\chi\in\text{Ch}_{\mathcal{T}_{\text{KFA}}}^{\mathcal{G}}$ with generating function 
\[
    f_{\chi}(X,Y) = \alpha_1 + \alpha_X X + \alpha_Y Y + \alpha_{Y^2} Y^2.
\]

The cases $\alpha_Y = 0 = \alpha_{Y^2}$ have been treated in \cite{kok} and are recapitulated in table \ref{table:recap}. We will use the following proposition.

\begin{proposition} (cf. lemma 2.6 in \cite{brundan-semisimplification})
    Let $\mathcal{C}$ be a semisimple good category, and $\overline{F}:(\mathcal{C}_{\text{univ}}^{\mathcal{T}_{\text{KFA}}})^{\text{IC}}\rightarrow \mathcal{C}$ be a full linear symmetric monoidal functor. Call $V$ the KFA in $\mathcal{C}$ defined by $\overline{F}$, and $\chi$ the character it affords. Then $\overline{F}$ factors through $\mathcal{C}_{\chi}$ via a fully faithful functor $F:\mathcal{C}_{\chi}\rightarrow \mathcal{C}$.
\end{proposition}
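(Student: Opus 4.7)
The plan is to identify the kernel ideal of $\overline{F}$ with the $\chi$-negligible ideal $\mathfrak{N}_{\chi}$, so that $\overline{F}$ descends faithfully to the quotient, extend the descent across the Karoubi envelope using that $\mathcal{C}$ is abelian (hence Karoubi-closed), and finally transfer fullness from $\overline{F}$ to the factored functor.

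Write $\mathcal{I}(A,B):=\{f\in (\mathcal{C}_{\text{univ}}^{\mathcal{T}_{\text{KFA}}})^{\text{IC}}(A,B):\overline{F}(f)=0\}$; since $\overline{F}$ is $\mathds{K}$-linear and symmetric monoidal, $\mathcal{I}$ is a tensor ideal. The inclusion $\mathcal{I}\subseteq\mathfrak{N}_{\chi}$ is immediate: if $\overline{F}(f)=0$, then for every $g$ one has $\chi(\mathrm{Tr}(g\circ f))=\mathrm{Tr}(\overline{F}(g)\circ \overline{F}(f))=0$, since $V=\overline{F}(V^{1,0})$ affords $\chi$ and any symmetric monoidal $\mathds{K}$-linear functor between rigid categories preserves traces. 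The reverse inclusion $\mathfrak{N}_{\chi}\subseteq\mathcal{I}$ is the substantive step: if $f$ is $\chi$-negligible, the same identity yields $\mathrm{Tr}(\overline{F}(g)\circ \overline{F}(f))=0$ for every $g$, and by fullness of $\overline{F}$ this forces $\mathrm{Tr}(h\circ \overline{F}(f))=0$ for every $h\in \mathcal{C}(\overline{F}(B),\overline{F}(A))$. In a semisimple good category the maximal negligible tensor ideal vanishes, because on each hom-space between direct sums of simples the trace-pairing is non-degenerate; hence $\overline{F}(f)=0$.

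With $\mathcal{I}=\mathfrak{N}_{\chi}$, $\overline{F}$ induces a faithful symmetric monoidal $\mathds{K}$-linear functor $(\mathcal{C}_{\text{univ}}^{\mathcal{T}_{\text{KFA}}})^{\text{IC}}/\mathfrak{N}_{\chi}\to\mathcal{C}$, which extends uniquely — since $\mathcal{C}$ is abelian and therefore Karoubi-closed — to $F:\mathcal{C}_{\chi}\to\mathcal{C}$ by the universal property of the Karoubi envelope. Faithfulness is preserved by construction. For fullness, take objects $(A,e),(A',e')\in\mathcal{C}_{\chi}$ and $h:F(A,e)\to F(A',e')$ in $\mathcal{C}$; write $h=\overline{F}(e')\circ \tilde h\circ \overline{F}(e)$ for some $\tilde h:\overline{F}(A)\to \overline{F}(A')$, lift $\tilde h=\overline{F}(g)$ using fullness of $\overline{F}$, and observe that the class of $e'\circ g\circ e$ in $\mathcal{C}_{\chi}$ maps to $h$ under $F$.

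The main obstacle is the inclusion $\mathfrak{N}_{\chi}\subseteq \mathcal{I}$: everything else is formal universality, whereas this step genuinely uses both the fullness of $\overline{F}$ (to promote trace-vanishing against images of $\overline{F}$ to trace-vanishing against all morphisms in $\mathcal{C}$) and the semisimplicity of $\mathcal{C}$ (to conclude $\overline{F}(f)=0$ from non-degeneracy of the trace form).
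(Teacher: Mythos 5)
Your proof is correct and follows essentially the same route as the paper's: both identify the kernel of $\overline{F}$ with the negligible ideal $\mathfrak{N}_{\chi}$ by combining fullness of $\overline{F}$ (to pass from trace-vanishing against images of $\overline{F}$ to trace-vanishing against all morphisms) with non-degeneracy of the trace pairing in a semisimple good category, and then deduce faithfulness from the reverse inclusion. You are in fact slightly more careful than the paper about the extension across the Karoubi envelope and the transfer of fullness to the induced functor $F$, which the paper leaves implicit.
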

\begin{proof}
    We must show that the kernel of $\overline{F}$ is precisely the tensor ideal of negligible morphisms $\mathcal{N}_{\chi}$. Let $A$ and $B$ be objects of $\mathcal{C}_{\text{univ}}^{\mathcal{T}_{\text{KFA}}}$. For any map $ \overline{g}:\overline{F}B\rightarrow \overline{F}A$, $\exists g:B\rightarrow A$ such that $\overline{F}(g)=\overline{g}$. For any $f\in \mathcal{N}_{\chi}(A,B)$, we have by definition 
    $0=\chi(\text{Tr}(f\circ g))=\overline{F}(\text{Tr}(f\circ g)) = \text{Tr}(\overline{F}(f)\circ \overline{F}(g))=\text{Tr}(\overline{F}(f)\circ \overline{g})$. However, $\mathcal{C}$ is semisimple good, so the trace pairing is non-degenerate: this means that $\overline{F}(f)=0$, showing that $F$ is well-defined (and full). Conversely, if $f\in \mathcal{C}_{\text{univ}}^{\mathcal{T}_{\text{KFA}}}(A,B)$ is in the kernel of $\overline{F}$, then $\forall g:B\rightarrow A$, $\chi(\text{Tr}(f\circ g))=\overline{F}(\text{Tr}(f\circ g))=\text{Tr}(\overline{F}(f)\circ\overline{F}(g))=0$, so $f$ is $\chi$-negligible, and thus $f$ is $0$ in $\calcatname{C}_{\chi}$. This proves the faithfulness of $F$. 
\end{proof}

\begin{subsubsection}{Case $\alpha_{Y^2}\neq0$}
Consider the good category $\calcatname{R}:=\catname{Rep}(O_{\alpha_{Y^2}-2})\boxtimes \catname{Rep}(O_{\alpha_{X}-3})$ (see appendix \ref{section-annex-o}). Call $V$ and $W$ the generating objects of $\catname{Rep}(O_{\alpha_{Y^2}-2})$ and $\catname{Rep}(O_{\alpha_{X}-3})$ respectively; they are both endowed with a non-degenerate symmetric pairing, they form thus, according to example \ref{example-non-semisimple-general}, a KFA in $\calcatname{R}$. Following the notation from this example, set $\delta=\alpha_1$, $\sigma=\alpha_Y$, and take arbitrary $\alpha, \beta \in \mathds{K}^*$. Direct computations show that this KFA affords the character $\chi$.\\

According to proposition \ref{proposition-universal-property}, there is a linear monoidal functor $\overline{F}:(\mathcal{C}_{\text{univ}}^{\mathcal{T}_{\text{KFA}}})^{\text{IC}}\rightarrow \calcatname{R}$ sending the generating KFA $I\oplus S^1$ of $\mathcal{C}_{\text{univ}}^{\mathcal{T}_{\text{KFA}}}$ to the KFA constructed with $V$ and $W$. 

\begin{proposition}
    The functor $\overline{F}$ is full.
\end{proposition}
\begin{proof}
    Any morphism of $\calcatname{R}$ is made of linear combinations of composition and tensoring of the identities $\mathds{1}_V$, $\mathds{1}_W$, the copairings $\eta_V$, $\eta_W$ and the pairings $c_V$, $c_W$ (and of course morphisms coming from the monoidal structure). It is therefore sufficient to show that these morphisms are in the image of $\overline{F}$. More concretely, we must find that the idempotent endomorphisms $\mathds{P}_{V}$ of $A_{\alpha, \delta}(V)$ and $\mathds{P}_{W}$ of $A_{\beta,\sigma}(W')$ in $\calcatname{R}$, projecting respectively on $V$ and on $W$, can be written in terms of the KFA structure morphisms, i.e., they are in the image of $\overline{F}$. It will follow immediately that $\eta_V$, $\eta_W$, $c_V$, and $c_W$ are also in the image (being proportional to restrictions of the pairings and copairings of the Frobenius algebras $A_{\alpha, \delta}(V)$ and $A_{\beta,\sigma}(W')$). We have the following idempotents: 
    \begin{itemize}
        \item $\mathds{P}_{V}:=\mathds{1}_{A_{\alpha, \delta}(V)}-\mathds{P}_{U_I}-\mathds{P}_{N_I}$,
        \item $\mathds{P}_{W}:=\mathds{1}_{A_{\beta, \sigma}(W')}-\mathds{P}_{U_S}-\mathds{P}_{N_S}-\mathds{P}_{\textbf{1}}$,
    \end{itemize}
    where
    \begin{align*}
        \mathds{P}_{U_I} & := \frac{1}{\alpha_{Y^2}} u_{I} \circ \epsilon_{I} \circ H & \text{Im}(\mathds{P}_{U_I}) & \cong U_I, \\
        \mathds{P}_{N_I} & := \frac{1}{\alpha_{Y^2}} H \circ \left(u_{I} \circ \epsilon_{I} - \alpha_Y \right) & \text{Im}(\mathds{P}_{N_I}) & \cong N_I, \\ 
        \mathds{P}_{U_S} & := \frac{1}{\alpha_{Y^2}} u_{S^1} \circ \epsilon_{S^1} \circ W^2 & \text{Im}(\mathds{P}_{U_S}) & \cong U_S, \\ 
        \mathds{P}_{N_S} & := \frac{1}{\alpha_{Y^2}} W^2 \circ \left(u_{S^1} \circ \epsilon_{S^1} - \alpha_1 \right) & \text{Im}(\mathds{P}_{N_S}) & \cong N_S, \\
        \mathds{P}_{\textbf{1}} & := \frac{1}{\alpha_{Y^2}} W \circ \left( u_{S^1} \circ \epsilon_{S^1} \circ W - \alpha_Y \right) & \text{Im}(\mathds{P}_{\textbf{1}}) & \cong \textbf{1}   
    \end{align*}
    are orthogonal idempotents projecting on the summands of $A_{\alpha, \delta}(V):=U_I\oplus V \oplus N_I$ and $A_{\beta, \sigma}(W'):= U_S \oplus W \oplus \textbf{1} \oplus N_S$. Notice that they are all written in terms of the KFA struture morphisms, meaning that they are in the image of $\overline{F}$.
\end{proof}

\begin{corollary}
    The induced linear monoidal functor $F:\calcatname{C}_{\chi}\rightarrow \calcatname{R}$ is an equivalence of good categories.
\end{corollary}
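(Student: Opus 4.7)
The plan is to promote the fully faithful functor $F$ just constructed to an equivalence by establishing essential surjectivity. First I would note that since $\chi\in\text{Ch}_{\mathcal{T}_{\text{KFA}}}^{\mathcal{G}}$ is good, the category $\calcatname{C}_{\chi}$ is semisimple good by definition \ref{theorem-udi-good-character}; likewise $\calcatname{R}$, being a Deligne product of semisimple good categories, is semisimple good. The preceding proposition, applied to the full functor $\overline{F}$, already yields that the induced $F$ is fully faithful, so only essential surjectivity remains.

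For essential surjectivity, I would exhibit the two generating objects $V\boxtimes\textbf{1}$ and $\textbf{1}\boxtimes W$ of the Deligne product $\calcatname{R}$ inside the essential image of $F$. This follows directly from the previous proof: the idempotents $\mathds{P}_{V}$ and $\mathds{P}_{W}$ constructed there are expressed purely in terms of the KFA structure morphisms of $I\oplus S^1$, so they are in the image of $\overline{F}$ and in particular lift to idempotent endomorphisms of $I\oplus S^1$ in $\calcatname{C}_{\chi}$. Since $\calcatname{C}_{\chi}$ is Karoubi closed, their images define objects of $\calcatname{C}_{\chi}$ that $F$ sends to objects isomorphic to $V\boxtimes\textbf{1}$ and $\textbf{1}\boxtimes W$ respectively.

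From there I would invoke the standard generation property of Deligne's interpolation categories: each factor $\catname{Rep}(O_{t})$ is the Karoubi envelope of the additive envelope of the monoidal subcategory generated by its defining representation. Consequently, every object of $\calcatname{R}$ is a direct summand of some tensor power $(V\boxtimes\textbf{1})^{\otimes p}\otimes(\textbf{1}\boxtimes W)^{\otimes q}$. Because $F$ is $\mathds{K}$-linear, additive, symmetric monoidal, and fully faithful, its essential image is closed under tensor products, finite direct sums, and direct summands inside $\calcatname{R}$. It therefore contains every such summand, giving essential surjectivity. A fully faithful, essentially surjective, $\mathds{K}$-linear symmetric monoidal functor between semisimple good categories is an equivalence of tensor categories, hence of good categories.

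The main obstacle will be the generation statement for Deligne's $\catname{Rep}(O_{t})$ and the transport of Karoubi closure across $F$; once these are granted, the rest is purely formal, and the splitting-plus-fullness machinery set up in the preceding two propositions does all the remaining work.
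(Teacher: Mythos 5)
Your proposal is correct and follows essentially the same route as the paper: full faithfulness of $F$ comes from the preceding proposition applied to the full functor $\overline{F}$, and essential surjectivity comes from the fact that the generators $V$ and $W$ (via the idempotents $\mathds{P}_V$, $\mathds{P}_W$) together with their pairings lie in the image and generate $\calcatname{R}$ under tensor products, direct sums, and splitting of idempotents. The paper states this generation-implies-essential-surjectivity step in one line; you have merely spelled out the (standard and correct) details.
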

\begin{proof}
    The above proof shows that both $V$ and $W$ are in the image of $\overline{F}$. Along with the morphisms $\eta_V$, $\eta_W$, $c_V$, and $c_W$, which are also in the image of $\overline{F}$, they generate the category $\calcatname{R}$. Therefore, $F$ is both fully faithful and essentially surjective on objects, i.e., it is an equivalence of categories. 
\end{proof}

The proof of fullness of $\overline{F}$ fails in the case $\alpha_{Y^2}=0$. In fact it is possible to show that the dimensions of the hom-spaces of $\calcatname{C}_{\chi}$ and $\calcatname{R}$ do not match in that case. 

\end{subsubsection}

\begin{subsubsection}{Case $\alpha_{Y^2}=0$, $\alpha_Y \neq 0$}

Consider the good category $\calcatname{S} := \calcatname{C}_{\chi_I} \boxtimes \catname{Rep}(O_{\alpha_X-2})$, where $\chi_I$ is the good character with generating function $f_{\chi_I}(X,Y)=\alpha_Y$. (The category $\calcatname{C}_{\chi_I} $ is equivalent to the category $\calcatname{C}$ described in \cite{kok}, cf. also table \ref{table:recap}.) \\

There is a KFA $(V_I,V_{S^1},\iota, \iota^*)$ in $\calcatname{S}$ affording the character $\chi$. Take $V_I$ to be the generating object of $\calcatname{C}_{\chi_I} $ and, for arbitrary $\alpha\in\mathds{K}^*$, $V_{S^1}=A_{\alpha, \alpha_1}(V)=U\oplus V\oplus N$, where $V$ is the generating object of $\catname{Rep}(O_{\alpha_X-2})$ and $U,N$ are isomorphic to the unit object $\textbf{1}$. Both $V_I $ and $ V_{S^1}$ have the structure of a commutative Frobenius algebra, which we distinguish naturally by adding subscripts $I$ and $S$ to the structure morphisms. We take the zipper and cozipper to be
\begin{itemize}
    \item \underline{Zipper} $\iota = u_I \boxtimes (U\cong \textbf{1})$,
    \item \underline{Cozipper} $\iota^* = \frac{1}{\alpha} \epsilon_I \boxtimes (\textbf{1}\cong N)$.
\end{itemize}

According to proposition \ref{proposition-universal-property}, there is a linear monoidal functor $\overline{F}:(\mathcal{C}_{\text{univ}}^{\mathcal{T}_{\text{KFA}}})^{\text{IC}}\rightarrow \calcatname{S}$ sending the generating KFA $I\oplus S^1$ of $\mathcal{C}_{\text{univ}}^{\mathcal{T}_{\text{KFA}}}$ to the KFA constructed above. 

\begin{proposition}
    The functor $\overline{F}$ is full.
\end{proposition}
\begin{proof}
    By a similar argument as for the case $\alpha_{Y^2}\neq 0$, it is sufficient to show that $V$ is in the image of $\overline{F}$. We have the following orthogonal idempotents in $\calcatname{S}$: 
    \begin{itemize}
        \item $\mathds{P}_U:=\frac{1}{\alpha_Y} u_{S} \circ \epsilon_I \circ \iota$,
        \item $\mathds{P}_N:=\frac{1}{\alpha_Y} ( \iota^* \circ u_I \circ \epsilon_{S} - \alpha_1 W) $,
        \item $\mathds{P}_V:=\mathds{1}_{V_{S^1}}-\mathds{P}_U-\mathds{P}_N$,
    \end{itemize}
    which are all in the image of $\overline{F}$ and project respectively on $U$, $N$, and $V$.
\end{proof}

\begin{corollary}
    The induced linear monoidal functor $F:\calcatname{C}_{\chi}\rightarrow \calcatname{S}$ is an equivalence of good categories. 
\end{corollary}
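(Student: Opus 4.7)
The plan is to mirror the argument used in the preceding subsubsection for the case $\alpha_{Y^2}\neq 0$. Since $\calcatname{S}=\calcatname{C}_{\chi_I}\boxtimes\catname{Rep}(O_{\alpha_X-2})$ is a Deligne product of semisimple good categories, it is itself semisimple good. Combined with the fullness of $\overline{F}$ just established, the factorisation proposition quoted at the start of the subsection immediately yields that $\overline{F}$ factors as a fully faithful linear symmetric monoidal functor $F:\calcatname{C}_{\chi}\rightarrow\calcatname{S}$.

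It then remains to prove that $F$ is essentially surjective on objects. I would argue that $\calcatname{S}$ is generated, as a Karoubi closed $\mathds{K}$-linear symmetric monoidal category, by the objects $V_I\boxtimes\textbf{1}$ and $\textbf{1}\boxtimes V$ (these generate each Deligne factor in turn, since $\catname{Rep}(O_{\alpha_X-2})=\calcatname{C}_{\chi_V}$ is by construction generated by $V$ and $\calcatname{C}_{\chi_I}$ is by construction generated by $V_I$). The first of these objects is the image under $\overline{F}$ of the open-sector generator of the universal KFA. The second arises as the image of the idempotent $\mathds{P}_V\in \text{End}(V_{S^1})$ displayed in the proof of fullness, and is therefore also in the image of $F$ once we pass to the Karoubi envelope. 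The morphisms needed to saturate these two generators into all of $\calcatname{S}$ (namely their identities together with the pairings and copairings coming from the Frobenius algebra structures on $V_I$ and on $A_{\alpha,\alpha_1}(V)$) are themselves images of structure tensors of the universal KFA, hence in the image of $\overline{F}$. This establishes essential surjectivity.

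Combining fully faithfulness with essential surjectivity, $F$ is an equivalence of categories. Both source and target are good ($\calcatname{C}_{\chi}$ because $\chi$ is good by hypothesis, $\calcatname{S}$ as a Deligne product of good categories), so $F$ is an equivalence of good categories.

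The main obstacle, if any, is verifying that $V_I\boxtimes\textbf{1}$ and $\textbf{1}\boxtimes V$ genuinely generate the Deligne product under the Karoubi closed symmetric monoidal structure; this however reduces to the analogous statement for each factor, which is built into Meir's construction of $\calcatname{C}_{\chi_I}$ and into the description of $\catname{Rep}(O_{\alpha_X-2})$ as the interpolation category generated by a single self-dual object carrying a symmetric non-degenerate pairing. Once this is in hand, the rest of the proof is essentially a rerun of the argument given for the $\alpha_{Y^2}\neq 0$ case.
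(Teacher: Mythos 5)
Your argument is correct and matches the paper's (the paper omits the proof of this corollary, intending the reader to rerun the argument from the $\alpha_{Y^2}\neq 0$ case, which is exactly what you do): fullness plus semisimplicity of $\calcatname{S}$ gives fully faithfulness via the factorisation proposition, and essential surjectivity follows because the two generating objects $V_I\boxtimes\textbf{1}$ and $\textbf{1}\boxtimes V$ lie in the essential image of $F$, the latter as the image of the idempotent $\mathds{P}_V$ after passing to the Karoubi envelope. Note only that your remark about needing the pairings and copairings in the image is redundant at this stage, since fullness of $\overline{F}$ already supplies all morphisms between objects in the image; essential surjectivity is purely a statement about objects.
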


\end{subsubsection}

\end{subsection}

\end{section}

\newpage

\appendix
\begin{section}{The category $\catname{Rep}(GL_d)$ for $d\in\mathds{K}$}
\label{section-annex-gl}

Consider the type $t=((1,1))$ (only the identity box) and the empty theory $\mathcal{T}=\emptyset$ (no axioms). To lighten the notation, we draw the identity box by a simple straight line. 
We have 
\[ 
    \calcatname{C}^{\emptyset}_{\text{univ}}(\textbf{1},\textbf{1})=\mathds{K}\left[\tikzfig{rep-gl/identity-trace}\right]\equiv\mathds{K}\left[\tikzfig{rep-gl/identity-trace-circle}\right]. 
\]

It is shown in \cite{meir-interpolation} that for this theory, the algebra of good characters is isomorphic to $\mathds{K}$. The isomorphism is the following:
\[
    \mathds{K} \ni d \mapsto \chi_d  \qquad \text{where} \qquad \chi_d: \mathds{K}[X]_{\text{aug}}^{\emptyset}\rightarrow \mathds{K}, \quad \chi_d\left(\tikzfig{rep-gl/identity-trace-circle} \right) = d.
\]

The category $\mathcal{C}_{\chi_d}$ is identified in \cite{meir-interpolation} as Deligne's category $\catname{Rep}(GL_d)$.
In some sense, we have a description of $V$ for any "dimension" $d\in\mathds{K}$, namely the tautological object $V=V^{1,0}$ of $\catname{Rep}(GL_d)$. \\

\textit{Define} $\mathfrak{gl}_d:= V^{1,1}$ seen as an object of $\catname{Rep}(GL_d)$. 
Let $\alpha\in\mathds{K}^*$. We recall that for the unit object $\textbf{1}$ of $\catname{Rep}(GL_d)$, we have $\textbf{1}\otimes\textbf{1}=\textbf{1}$ and $\text{End}_{\catname{Rep}(GL_d)}(\textbf{1})=\mathds{K}$. The pair $V_I=\mathfrak{gl}_d$ and $V_{S^1}=\textbf{1}$, together with the following appropriate structure, forms a knowledgeable Frobenius algebra. 

\vspace{1em}
$\quad$ \underline{Open sector}:
\[
\begin{array}{rlrclll}
    \nabla_I: & \mathfrak{gl}_d\otimes \mathfrak{gl}_d\rightarrow \mathfrak{gl}_d, & \nabla_I & = & \tikzfig{rep-gl/GL-product} & \in & Con_{\emptyset}^{1+2,1+2} \\ 
    u_I: & \textbf{1} \rightarrow \mathfrak{gl}_d, & u_I & = & \hphantom{x}\tikzfig{rep-gl/O-id} & \in & Con_{\emptyset}^{1+0,1+0} \\ 
    \Delta_I: & \mathfrak{gl}_d\rightarrow \mathfrak{gl}_d\otimes \mathfrak{gl}_d, & \Delta_I & = & \alpha^{-1} \tikzfig{rep-gl/GL-product-braided} & \in & Con_{\emptyset}^{2+1,2+1} \\
    \epsilon_I: & \mathfrak{gl}_d\rightarrow \textbf{1}, & \epsilon_I & = & \alpha\tikzfig{rep-gl/O-id}=\alpha~\text{tr} & \in & Con_{\emptyset}^{0+1,0+1} \\
\end{array}  
\]

$\quad$ \underline{Closed sector}:
\[
\begin{array}{rlrcccl}
    \nabla_{S^1}: & \textbf{1}\otimes \textbf{1}\rightarrow \textbf{1}, & \nabla_{S^1} & = & 1 & \in & \mathds{K} \\ 
    u_{S^1}: & \textbf{1} \rightarrow \textbf{1}, & u_{S^1} & = & 1 & \in & \mathds{K} \\ 
    \Delta_{S^1}: & \textbf{1}\rightarrow \textbf{1}\otimes \textbf{1}, & \Delta_{S^1} & = & \alpha^{-2} & \in & \mathds{K} \\
    \epsilon_{S^1}: & \textbf{1}\rightarrow \textbf{1}, & \epsilon_{S^1} & = & \alpha^2 & \in & \mathds{K} \\
\end{array}  
\]

$\quad$ \underline{Zipper}:
\[
\begin{array}{rllcccl}
    \iota: &  \textbf{1} \rightarrow \mathfrak{gl}_d, & \iota & = & \tikzfig{rep-gl/O-id} & \in & Con_{\emptyset}^{1+0,1+0} \\ 
    \iota^*: & \mathfrak{gl}_d \rightarrow \textbf{1}, & \iota^* & = & \alpha^{-1}~\tikzfig{rep-gl/O-id} & \in & Con_{\emptyset}^{0+1,0+1}
\end{array}  
\]

\end{section}
\begin{section}{The category $\catname{Rep}(O_d)$ for $d\in\mathds{K}$}
\label{section-annex-o}

Consider the algebraic type $t=((1,1),(2,0),(0,2))$ and theory $\mathcal{T}_{O}$: 
\[
    \begin{matrix}
        \tikzfig{figures/rep-O/O-U-box} \equiv \tikzfig{figures/rep-O/O-U} \qquad \tikzfig{figures/rep-O/O-n-box} \equiv \tikzfig{figures/rep-O/O-n} \qquad \tikzfig{figures/rep-O/O-id-box} \equiv \tikzfig{figures/rep-O/O-id} \\
        \tikzfig{figures/rep-O/O-n} = \tikzfig{figures/rep-O/O-n-symmetry} \qquad \tikzfig{figures/rep-O/O-axiom-non-degen-1} = \tikzfig{figures/rep-O/O-id}  = \tikzfig{figures/rep-O/O-axiom-non-degen-2}
    \end{matrix}
\]

where we have replaced the boxes by straight lines to lighten the notation. \\

We have 
\[ 
    \calcatname{C}^{\mathcal{T}_{O}}_{\text{univ}}(\textbf{1},\textbf{1})=\mathds{K}\left[\tikzfig{rep-gl/identity-trace}\right]\equiv\mathds{K}\left[\tikzfig{rep-gl/identity-trace-circle}\right]. 
\]

It is shown in \cite{meir-interpolation} that for this theory, the algebra of good characters is isomorphic to $\mathds{K}$. The isomorphism is the following:
\[
    \mathds{K} \ni d \mapsto \chi_d  \qquad \text{where} \qquad \chi_d: \mathds{K}[X]_{\text{aug}}^{\mathcal{T}_{O}}\rightarrow \mathds{K}, \quad \chi_d\left(\tikzfig{rep-gl/identity-trace-circle} \right) = d.
\]

The category $\mathcal{C}_{\chi_d}$ is identified in \cite{meir-interpolation} as Deligne's category $\catname{Rep}(O_d)$.
In some sense, we have a description of $V$ for any "dimension" $d\in\mathds{K}$, namely the tautological object $V=V^{1,0}$ of $\catname{Rep}(O_d)$. \\

In the category $\catname{Rep}(O_d)$, the object $V$ has a non-degenerate symmetric pairing, $c$. One can also create artificially another object, $V':=V\oplus \Gamma$, where $\Gamma=\mathbf{1}$, endowed with a non-degenerate symmetric pairing $c'=c \oplus 1$, where $1$ is the identity $\Gamma \otimes \Gamma = \textbf{1}\otimes \textbf{1} = \textbf{1}=\Gamma$.

\end{section}

% ---- references ----
\newpage
\begin{center}\textbf{References}\end{center}

% combler l'espace vide
\vspace{-7em}

% dans l'ordre d'apparition : unsrt
\bibliographystyle{alpha} 
\bibliography{biblio}

% ---- references ----

\end{document}